\newtheorem{theorem}{Theorem}[section]
\newtheorem{lemma}[theorem]{Lemma}
\newtheorem{proposition}[theorem]{Proposition}
\newtheorem{corollary}[theorem]{Corollary}
\theoremstyle{definition}
\theoremstyle{remark}
\newtheorem{remark}[theorem]{Remark}
\numberwithin{equation}{section}
\title[Biharmonic homogeneous submanifolds]
{Biharmonic homogeneous submanifolds in compact symmetric spaces and compact Lie groups}
\author{Shinji Ohno}
\address{
Osaka City University Advanced Mathematical Institute (OCAMI) 
3-3-138, Sugimoto, Sumiyoshi-ku 
Osaka, 558-8585 Japan }
\email{kudamono.shinji@gmail.com}
\author{Takashi Sakai}
\address{
Department of Mathematics and Information Sciences, 
Tokyo Metropolitan University, 
Minami-Osawa 1-1, Hachioji, Tokyo, 192-0397, Japan.}
\email{sakai-t@tmu.ac.jp}
\author{Hajime Urakawa}
\address{
Institute for International Education, 
Tohoku University, 
Kawauchi 41, Sendai, 980-8576, Japan.}
\email{urakawa@math.is.tohoku.ac.jp}
\subjclass[2010]{Primary 58E20; Secondary 53C43}
\date{\today}
\keywords{Symmetric space, symmetric triad, Hermann action, harmonic map, biharmonic map}
\begin{document}

\begin{abstract}
We give a necessary and sufficient condition for
orbits of commutative Hermann actions and actions of the direct product of two symmetric subgroups on compact Lie groups
to be biharmonic in terms of symmetric triad with multiplicities.
By this criterion,
we determine all the proper biharmonic submanifolds in irreducible symmetric spaces of compact type 
which are singular orbits of commutative Hermann actions of cohomogeneity two.
Also, in compact simple Lie groups, we determine all the biharmonic hypersurfaces
which are regular orbits of actions of the direct product of two symmetric subgroups
which are associated to commutative Hermann actions of cohomogeneity one.
\end{abstract}

\maketitle

\tableofcontents

\section{Introduction}
\label{sect:Introduction}

Study of harmonic maps which are critical points of the energy functional
is one of the central problems in differential geometry including  minimal submanifolds.
The Euler-Lagrange equation is given by the vanishing of the tension field. 

In 1983, Eells and Lemaire \cite{EL} proposed 
to study biharmonic maps which are the critical points of the bienergy, 
by definition,
half of the integral of square of the norm of tension 
field $\tau(\varphi)$ 
for a smooth map $\varphi$ of a Riemannian 
manifold $(M,g)$ into another Riemannian manifold $(N,h)$. 
After a pioneering work of G.Y. Jiang \cite{J}, several geometers have studied biharmonic maps 
(see \cite{CMP}, \cite{IIU1}, \cite{IIU2}, \cite{II}, 
\cite{LO}, \cite{MO}, \cite{OT2}, \cite{S}, etc.). 
Notice that harmonic maps are always biharmonic. 
One of central problems is to ask whether the converse is true. 
The {\em B.Y. Chen's conjecture} is 
to ask whether every biharmonic submanifold of the Euclidean space 
${\mathbb R}^n$ must be harmonic, i.e., minimal (\cite{C}).  
It was solved affirmatively in the case surfaces in the three dimensional Euclidean space,
and the case of hypersurfaces of the four dimensional Euclidean space (\cite{D}, \cite{HV}),
and for the case of generic hypersurfaces in the Euclidean space (\cite{KU}). 
Furthermore, 
Akutagawa and Maeta 
showed  (\cite{AM}) that every 
complete properly immersed biharmonic submanifold in the Euclidean space 
${\mathbb R}^n$ must be minimal.  

Moreover, 
R. Caddeo, S. Montaldo, P. Piu and C. Oniciuc 
raised (\cite{CMP}, \cite{Oniciuc}) the {\em generalized B.Y. Chen's conjecture} to ask
whether each biharmonic submanifold in a Riemannian manifold $(N,h)$
of non-positive sectional curvature must be harmonic (minimal).
For the generalized Chen's conjecture,
Ou and Tang gave (\cite{OT1}, \cite{OT2}) a counter example in some Riemannian manifold
of negative sectional curvature. 
However, it is also known (cf. \cite{NU1}, \cite{NU2}, \cite{NUG})
that every biharmonic map of a complete Riemannian manifold
into another Riemannian manifold of non-positive sectional curvature
with finite energy and finite bienergy must be harmonic. 

On the contrary, for the target Riemannian manifold $(N,h)$ of non-negative sectional curvature, 
theories of biharmonic maps and/or biharmonic immersions seems to be quite different
from the case $(N,h)$ of non-positive sectional curvature. 

In 2015, we characterized the biharmonic property of isometric immersions into Einstein manifolds
whose tension field is parallel with respect to the normal connection
in terms of second fundamental form and the curvature tensor,
and determined all the biharmonic hypersurfaces in irreducible symmetric spaces of compact type
which are regular orbits of commutative Hermann actions of cohomogeneity one (cf. \cite{OSU}).
For this purpose,
we used the description of second fundamental forms of orbits of commutative Hermann actions
in terms of symmetric triad with multiplicities, which is given by Ikawa (\cite{I1}).
Recently, the first author (\cite{Ohno}) applied the method of symmetric triad
to study the geometry of orbits of actions of the direct product of two symmetric subgroups
on compact Lie groups, which are associated to commutative Hermann actions.

In this paper, 
we characterize the biharmonic property of orbits of commutative Hermann actions
and the actions of the direct product of two symmetric subgroups on compact Lie groups
in terms of symmetric triad with multiplicities
(cf. Theorems~\ref{thm: Bih. orbits of commutative Hermann actions} and \ref{Thm:charec:Liegrp}).
By this characterization, in Section~\ref{sect:Biharmonic homogeneous submanifolds in compact symmetric spaces},
we determine all the proper biharmonic submanifolds in irreducible symmetric spaces of compact type 
which are singular orbits of commutative Hermann actions of cohomogeneity two.
In the list in Section~\ref{Tables_of_proper_biharmonic_orbits_of_commutative_Hermann_actions},
we obtain a great many examples of proper biharmonic submanifolds in compact symmetric spaces
of higher codimension.
Furthermore, in Section~\ref{sect:Biharmonic homogeneous hypersurfaces in compact Lie groups},
in compact simple Lie groups, 
we determine all the biharmonic hypersurfaces 
which are regular orbits of actions of the direct product of two symmetric subgroups
which are associated to commutative Hermann actions of cohomogeneity one
(cf. Theorem~\ref{thm:list_of_biharmonic_orbits_in_Lie_group}).
We note that recently Inoguchi and Sasahara (\cite{IS}) also investigated biharmonic homogeneous hypersurfaces
in compact symmetric spaces.

\section{Biharmonic isometric immersions}
\label{sect:Biharmonic isometric immersions}

We first recall the definition and fundamentals of harmonic maps and biharmonic maps.
Let $\varphi: (M,g) \rightarrow (N,h)$ be a smooth map from an $m$-dimensional compact Riemannian manifold $(M,g)$
into an $n$-dimensional Riemannian manifold $(N,h)$.
Then $\varphi$ is said to be {\em harmonic}
if it is a critical point of the {\em energy functional} defined by 
$$
E(\varphi) = \frac{1}{2} \int_M \vert d\varphi \vert^2 v_g.
$$
That is, for any variation $\{\varphi_t\}$ of $\varphi$ with $\varphi_0 = \varphi$,
\begin{equation} \label{eq:2.1}
\frac{d}{dt} \bigg\vert_{t=0} E(\varphi_t) = -\int_M h(\tau(\varphi), V) v_g = 0.
\end{equation}
Here $V\in \Gamma(\varphi^{-1}TN)$ is a variation vector field along $\varphi$ which is given by 
$V(x)=\frac{d}{dt}\big\vert_{t=0}\varphi_t(x)\in T_{\varphi(x)}N \ (x\in M)$, 
and $\tau(\varphi)$ is the {\em tension field} of $\varphi$ which is given by 
$\tau(\varphi)=\sum_{i=1}^m B_\varphi(e_i,e_i)\in \Gamma(\varphi^{-1}TN)$, 
where 
$\{e_i\}_{i=1}^m$ is a locally defined orthonormal frame field on $(M,g)$, 
and $B_\varphi$ is the second fundamental form of $\varphi$ defined by 
\begin{align*}
B_\varphi(X,Y)
&=(\widetilde{\nabla}d\varphi)(X,Y) \\
&=(\widetilde{\nabla}_Xd\varphi)(Y) \\
&=\overline{\nabla}_X(d\varphi(Y))-d\varphi(\nabla_XY),
\end{align*}
for all vector fields $X, Y\in \mathfrak{X}(M)$. 
Here we denote by $\nabla$ and $\nabla^h$ the Levi-Civita connections on $TM$, $TN$ of $(M,g)$, $(N,h)$,
and by $\overline{\nabla}$ and $\widetilde{\nabla}$ the induced connections
on $\varphi^{-1}TN$ and $T^{\ast}M\otimes \varphi^{-1}TN$, respectively.
By (\ref{eq:2.1}), $\varphi$ is harmonic if and only if $\tau(\varphi)=0$. 
We note that if $\varphi : M \to N$ is an isometric immersion,
then the tension field $\tau (\varphi)$ coincides with the mean curvature vector field of $\varphi$,
hence $\varphi$ is harmonic if and only if $\varphi$ is a minimal immersion.

J. Eells and L. Lemaire \cite{EL} proposed the notion of biharmonic maps,
and Jiang \cite{J} studied the first and second variation formulas of biharmonic maps.
Let us consider the {\em bienergy functional} defined by 
$$
E_2(\varphi) = \frac{1}{2} \int_M \vert\tau(\varphi)\vert^2 v_g,
$$
where $\vert V\vert^2=h(V,V)$ for $V\in \Gamma(\varphi^{-1}TN)$.
The first variation formula of the bienergy functional is given by
$$
\frac{d}{dt} \bigg\vert_{t=0} E_2(\varphi_t) = -\int_M h(\tau_2(\varphi), V) v_g,
$$
where $\tau_2(\varphi)$ is called the {\em bitension field} of $\varphi$ which is defined by 
$$
\tau_2(\varphi) := J(\tau(\varphi)) = \overline{\Delta}(\tau(\varphi))-\mathcal{R}(\tau(\varphi)).
$$
Here $J$ is the {\em Jacobi operator} acting on $\Gamma(\varphi^{-1}TN)$ given by 
$$
J(V)=\overline{\Delta}V-\mathcal{R}(V),
$$
where 
$\overline{\Delta}V
=\overline{\nabla}^\ast \overline{\nabla}V
=-\sum_{i=1}^m\{\overline{\nabla}_{e_i}\overline{\nabla}_{e_i}V-\overline{\nabla}_{\nabla_{e_i}e_i}V\}$ 
is the rough Laplacian and 
$\mathcal{R}$ is a linear operator on $\Gamma(\varphi^{-1}TN)$ defined by 
$\mathcal{R}(V)=\sum_{i=1}^mR^h(V,d\varphi(e_i))d\varphi(e_i)$,
where $R^h$ is the curvature tensor of $(N,h)$ given by 
$R^h(U,V)W=\nabla^h_U(\nabla^h_VW)-\nabla^h_V(\nabla^h_UW)-\nabla^h_{[U,V]}W$ for $U,V,W\in  \mathfrak{X}(N)$. 
A smooth map $\varphi$ of $(M,g)$ into $(N,h)$ is said to be 
{\em biharmonic} if $\tau_2(\varphi)=0$.
By definition, every harmonic map is biharmonic. 
We say that a smooth map $\varphi:\,(M,g)\rightarrow (N,h)$ is proper biharmonic
if it is biharmonic but not harmonic.

Now we give a characterization theorem for an isometric immersion $\varphi$ 
of a Riemannian manifold $(M,g)$ into another Riemannian manifold $(N,h)$ 
whose tension field $\tau(\varphi)$ satisfies 
$\overline{\nabla}^\perp_X\tau(\varphi)=0$ for all $X \in \mathfrak{X}(M)$ to be biharmonic,
where $\overline{\nabla}^\perp$ is the normal connection on the normal bundle $T^\perp M$.
From Jiang's theorem (\cite{J}), we showed the following theorem. 

\begin{theorem}[\cite{OSU}] \label{theorem1}
Let $\varphi : (M,g) \rightarrow (N,h)$ be an isometric immersion 
which satisfies that $\overline{\nabla}^\perp_X \tau(\varphi) = 0$ for all $X \in \mathfrak{X}(M)$.
Then $\varphi$ is biharmonic if and only if
\begin{equation} \label{eq:2.2}
\sum_{k=1}^m R^h(\tau(\varphi), d\varphi(e_k)) d\varphi(e_k)
= \sum_{j,k=1}^m h(\tau(\varphi), B_{\varphi}(e_j,e_k)) B_{\varphi}(e_j,e_k)
\end{equation}
holds.
\end{theorem}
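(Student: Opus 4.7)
The plan is to compute $\tau_2(\varphi)=\overline{\Delta}\tau(\varphi)-\mathcal{R}(\tau(\varphi))$ under the hypothesis $\overline{\nabla}^\perp\tau(\varphi)=0$, and to decompose the equation $\tau_2(\varphi)=0$ according to the orthogonal splitting $\varphi^{-1}TN=d\varphi(TM)\oplus T^\perp M$. The key observation is that the right hand side of \eqref{eq:2.2} lies entirely in $T^\perp M$, so the single vectorial identity \eqref{eq:2.2} packages two scalar conditions: $[\mathcal{R}(\tau(\varphi))]^T=0$ and $[\mathcal{R}(\tau(\varphi))]^\perp=\sum_{j,k}h(\tau(\varphi),B_\varphi(e_j,e_k))B_\varphi(e_j,e_k)$.

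To treat the normal part, I would start from the Weingarten formula $\overline{\nabla}_X\tau(\varphi)=-d\varphi(A_{\tau(\varphi)}X)+\overline{\nabla}^\perp_X\tau(\varphi)$, which by hypothesis collapses to $-d\varphi(A_{\tau(\varphi)}X)$. Iterating with the Gauss formula $\overline{\nabla}_Xd\varphi(Y)=d\varphi(\nabla_XY)+B_\varphi(X,Y)$, a direct calculation produces
\[
\overline{\Delta}\tau(\varphi)=d\varphi\Bigl(\sum_i(\nabla_{e_i}A_{\tau(\varphi)})e_i\Bigr)+\sum_i B_\varphi(e_i,A_{\tau(\varphi)}e_i).
\]
The Weingarten identity $h(A_\xi X,Y)=h(B_\varphi(X,Y),\xi)$ then expands $A_{\tau(\varphi)}e_i$ in the frame $\{e_j\}$, and the normal summand becomes $\sum_{j,k}h(\tau(\varphi),B_\varphi(e_j,e_k))B_\varphi(e_j,e_k)$, matching the right hand side of \eqref{eq:2.2}.

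For the tangential part of $\mathcal{R}(\tau(\varphi))$, I would use the symmetry $R^h(X,Y,Z,W)=R^h(Z,W,X,Y)$ to rewrite $h(R^h(\tau(\varphi),d\varphi(e_k))d\varphi(e_k),d\varphi(e_j))$ as $h(R^h(d\varphi(e_k),d\varphi(e_j))\tau(\varphi),d\varphi(e_k))$, and then invoke the Codazzi-Ricci equation for the isometric immersion $\varphi$. Under $\overline{\nabla}^\perp\tau(\varphi)=0$ this simplifies to $(R^h(X,Y)\tau(\varphi))^T=-d\varphi\bigl((\nabla_XA_{\tau(\varphi)})Y-(\nabla_YA_{\tau(\varphi)})X\bigr)$. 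Summing over $k$, using the symmetry of $\nabla A_{\tau(\varphi)}$ and the fact that $\mathrm{tr}(\nabla_{e_j}A_{\tau(\varphi)})=e_j(|\tau(\varphi)|^2)=0$ (since $|\tau(\varphi)|^2$ is constant under parallel normal translation), I arrive at $[\mathcal{R}(\tau(\varphi))]^T=-d\varphi\bigl(\sum_i(\nabla_{e_i}A_{\tau(\varphi)})e_i\bigr)$. Consequently the tangential part of $\tau_2(\varphi)=0$ reads $2\,d\varphi\bigl(\sum_i(\nabla_{e_i}A_{\tau(\varphi)})e_i\bigr)=0$, which is exactly the condition $[\mathcal{R}(\tau(\varphi))]^T=0$; combining with the normal part recovers \eqref{eq:2.2}.

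The main obstacle I anticipate is sign bookkeeping in the Codazzi-Ricci identity, together with correctly tracking how the hypothesis $\overline{\nabla}^\perp\tau(\varphi)=0$ enters at two different points of the argument: once to simplify the covariant derivative of the shape operator $A_{\tau(\varphi)}$, and once to force the trace derivative $e_j(|\tau(\varphi)|^2)$ to vanish, without which the tangential equation would not reduce to a purely curvature-theoretic condition.
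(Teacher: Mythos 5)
Your proposal is correct and follows essentially the same route as the source of this theorem: the paper defers the proof to \cite{OSU}, which obtains (\ref{eq:2.2}) by specializing Jiang's tangential/normal decomposition of the bitension field to the hypothesis $\overline{\nabla}^\perp\tau(\varphi)=0$, and your computation reproduces exactly that decomposition — the normal part of $\overline{\Delta}\tau(\varphi)$ yielding $\sum_{j,k}h(\tau(\varphi),B_\varphi(e_j,e_k))B_\varphi(e_j,e_k)$, and the tangential equation $2\,d\varphi\bigl(\sum_i(\nabla_{e_i}A_{\tau(\varphi)})e_i\bigr)=0$ being equivalent, via Codazzi and $e_j|\tau(\varphi)|^2=2h(\overline{\nabla}^\perp_{e_j}\tau(\varphi),\tau(\varphi))=0$, to $[\mathcal{R}(\tau(\varphi))]^T=0$. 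I see no gaps.
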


The condition (\ref{eq:2.2}) is equivalent to the following equation.

\begin{equation} \label{eq:2.3}
\sum_{i=1}^mR^h \big( \tau(\varphi), d\varphi(e_i) \big) d\varphi(e_i)
= \sum_{i=1}^m B_\varphi \big( A_{\tau(\varphi)}e_i, e_i \big).
\end{equation}

\section{Hermann actions and associated $(K_2 \times K_1)$-actions}
\label{sect:Hermann actions}

\subsection{Hermann actions and symmetric triads}

Ikawa (\cite{I1}) introduced the notion of symmetric triad as a generalization of irreducible root system.
He described the second fundamental forms of orbits of commutative Hermann actions in terms of symmetric triads
with multiplicities,
and studied geometric properties of the orbits as submanifolds in compact symmetric spaces.
In this section, we review Ikawa's method, and we show that his method can be also applied to study
geometric properties of orbits of actions of the direct product of two symmetric subgroups on compact Lie groups,
which are associated to Hermann actions.

Let $G$ be a compact connected semisimple Lie group,
and $K_1$, $K_2$ closed subgroups of $G$.
For each $i=1, 2$, we assume that there exists an involutive automorphism $\theta_i$ of $G$
which satisfies $(G_{\theta_i})_0 \subset K_i \subset G_{\theta_i}$,
where $G_{\theta_i}$ is the set of fixed points of $\theta_i$ 
and $(G_i)_0$ is the identity component of $G_{\theta_i}$.
Then $(G, K_1)$ and $(G, K_2)$ are compact symmetric pairs,
and the triple $(G, K_1, K_2)$ is called a {\it compact symmetric triad}.
We denote the Lie algebras of $G$, $K_1$ and $K_2$ by $\mathfrak{g}$, $\mathfrak{k}_1$ and $\mathfrak{k}_2$,
respectively.
The involutive automorphism of $\mathfrak{g}$ induced from $\theta_i$ will be also denoted by $\theta_i$.
Take an $\mathrm{Ad}(G)$-invariant inner product $\langle \cdot, \cdot \rangle$ on $\mathfrak{g}$.
Then the inner product $\langle \cdot, \cdot \rangle$ induces a bi-invariant Riemannian metric on $G$
and $G$-invariant Riemannian metrics on the left coset manifold $N_{1}:=G/K_{1}$
and on the right coset manifold $N_{2}:=K_{2}\backslash G$.
We denote these Riemannian metrics on $G$, $N_{1}$ and $N_{2}$ by the same symbol $\langle \cdot, \cdot \rangle$.
These Riemannian manifolds $G$, $N_{1}$ and $N_{2}$ are Riemannian symmetric spaces
with respect to $\langle \cdot, \cdot \rangle$.
The isometric action of $K_2$ on $G/K_1$ and that of $K_1$ on $K_2 \backslash G$ defined by
\begin{itemize}
\item $K_2 \curvearrowright N_1 :\quad k_2 \cdot \pi_1(x) = \pi_1(k_2 x) \quad (k_2 \in K_2, x \in G)$
\item $K_1 \curvearrowright N_2 :\quad k_1 \cdot \pi_2(x) = \pi_2(x k_1^{-1}) \quad (k_1 \in K_1, x \in G)$
\end{itemize}
are called {\em Hermann actions},
where $\pi_{i}$ denotes the natural projection from $G$ onto $N_{i}\ (i=1, 2)$.
Under this setting, we can also consider the isometric action of $K_2 \times K_1$ on $G$ defined by
\begin{itemize}
\item $K_2 \times K_1 \curvearrowright G :\quad (k_2, k_1) \cdot x = k_2 x k_1^{-1} \quad (k_2 \in K_2, k_1 \in K_1, x \in G)$.
\end{itemize}
The three actions have the same orbit space, 
and in fact the following diagram is commutative:
\[
\xymatrix{
                                    & G \ar[dl]_{\pi_{1}}\ar[dr]^{\pi_{2}} & \\
N_{1} \ar[dr]^{\widetilde{\pi}_{1}} &                                      & N_{2} \ar[dl]_{\widetilde{\pi}_{2}}\\
                                    & K_{2}\backslash G/K_{1}              & \\
}
\]
where $\tilde{\pi}_{i}$ is the natural projection from $N_{i}$ onto the orbit space $K_{2}\backslash G/K_{1}$.
For $x \in G$, we denote the left (resp. right) transformation of $G$ by $L_{x}$ (resp. $R_{x}$).
The isometry on $N_{1}$ (resp. $N_{2}$) induced by $L_{x}$ (resp. $R_{x}$) will be also denoted
by the same symbol $L_{x}$ (resp. $R_{x}$).

For $i=1, 2$, we set
$$
\mathfrak{m}_{i} = \{X \in \mathfrak{g} \mid \theta_{i}(X)=-X \}.
$$ 
Then we have two orthogonal direct sum decompositions of $\mathfrak{g}$, that is the canonical decompositions:
$$
\mathfrak{g} = \mathfrak{k}_1 \oplus \mathfrak{m}_1 = \mathfrak{k}_2 \oplus \mathfrak{m}_2.
$$
The tangent space $T_{\pi_{i}(e)}N_{i}$ of $N_{i}$ at the origin $\pi_{i}(e)$ is identified
with $\mathfrak{m}_{i}$ in a natural way.
We define a closed subgroup $G_{12}$ of $G$ by 
$$
G_{12} = \{x \in G \mid \theta_{1}(x) = \theta_{2}(x)\}.
$$
Then $\theta_1$ induces an involutive automorphism of the identity component $(G_{12})_0$ of $G_{12}$, 
hence $((G_{12})_{0},K_{12})$ is a compact symmetric pair,
where $K_{12}$ is a closed subgroup of $(G_{12})_{0}$ defined by 
$$
K_{12}=\{k \in (G_{12})_{0} \mid \theta_{1}(k)=k\}.
$$
The canonical decomposition of $\mathfrak{g}_{12} = \mathrm{Lie} (G_{12})_0$ with respect to $\theta_1$ is given by 
$$
\mathfrak{g}_{12}=(\mathfrak{k}_{1} \cap \mathfrak{k}_{2}) \oplus (\mathfrak{m}_{1}\cap \mathfrak{m}_{2}).
$$

In general, an isometric action of a compact Lie group on a Riemannian manifold is said to be {\em hyperpolar}
if there exists a closed connected submanifold which is flat in the induced metric and meets all orbits orthogonally.
Such a submanifold is called a {\em section} of the Lie group action.
In our setting, fix a maximal abelian subspace $\mathfrak{a}$ in $\mathfrak{m}_{1} \cap \mathfrak{m}_{2}$.
Then $\exp \mathfrak{a}$ is a torus subgroup in $(G_{12})_{0}$.
Then $\exp \mathfrak{a}$, $\pi_{1}(\exp \mathfrak{a})$ and $\pi_{2}(\exp \mathfrak{a})$
are sections of the $(K_{2}\times K_{1})$-action on $G$, 
the $K_{2}$-action on $N_1$, and the $K_{1}$-action on $N_2$, respectively.
Hence these three actions are hyperpolar,
and their cohomogeneities are equal to $\dim \mathfrak{a}$.
A. Kollross (\cite{K}) classified hyperpolar actions on compact irreducible symmetric spaces.
By the classification, we can see that a hyperpolar action on a compact irreducible symmetric space
whose cohomogeneity is greater than or equal to two is orbit-equivalent to some Hermann action.

In order to describe the orbit spaces of the three actions,
we consider an equivalent relation $\sim$ on $\mathfrak{a}$ defined as follows:
for $H_{1}, H_{2} \in \mathfrak{a}$, 
we define $H_{1}\sim H_{2}$ if $(K_{2}\times K_{1})\cdot \exp (H_{1}) = (K_{2}\times K_{1})\cdot \exp (H_{2})$. 
Clearly, we have $H_{1}\sim H_{2}$ if and only if $K_{2}\cdot \pi_{1}(\exp (H_{1})) = K_{2}\cdot \pi_{1}(\exp (H_{2}))$,
and similarly, $H_{1}\sim H_{2}$ if and only if $K_{1}\cdot \pi_{2}(\exp (H_{1})) = K_{1}\cdot \pi_{2}(\exp (H_{2}))$.
This implies that $\mathfrak{a}/\hspace{-5pt} \sim \cong K_{2}\backslash G /K_{1}$.
For a subgroup $L$ of $G$, we define the normalizer $N_{L}(\mathfrak{a})$ and the centralizer $Z_{L}(\mathfrak{a})$
of $\mathfrak{a}$ in $L$ by
\begin{align*}
N_{L}(\mathfrak{a}) &= \{ k \in L \mid \mathrm{Ad}(k)\mathfrak{a} =\mathfrak{a}\},\\
Z_{L}(\mathfrak{a}) &= \{ k \in L \mid \mathrm{Ad}(k)H =H \ (H \in \mathfrak{a})\}.
\end{align*}
Then $Z_{L}(\mathfrak{a})$ is a normal subgroup of $N_{L}(\mathfrak{a})$.
We define a group $\tilde{J}$ by 
$$
\tilde{J}=\{([s], Y) \in N_{K_{2}}(\mathfrak{a})/Z_{K_{1}\cap K_{2}}(\mathfrak{a}) \ltimes \mathfrak{a} \mid \exp (-Y)s \in K_{1}\}.
$$
The group $\tilde{J}$ naturally acts on $\mathfrak{a}$ 
by the following manner: 
$$
([s], Y)\cdot H= \mathrm{Ad}(s)H+Y \quad \big( ([s], Y) \in \tilde{J},\ H \in \mathfrak{a} \big).
$$
Matsuki (\cite{M}) proved that 
$$
K_{2}\backslash G /K_{1} \cong\mathfrak{a}/\tilde{J}.
$$

Hereafter, we suppose $\theta_{1}\theta_{2}=\theta_{2}\theta_{1}$.
In such a case, $(G, K_1, K_2)$ is called a {\em commutative} compact symmetric triad,
and the $K_2$-action on $N_1$ and the $K_1$-action on $N_2$ are called {\em commutative} Hermann actions.
Then we have an orthogonal direct sum decomposition of $\mathfrak{g}$:
$$
\mathfrak{g}=
(\mathfrak{k}_{1}\cap \mathfrak{k}_{2})\oplus
(\mathfrak{m}_{1}\cap \mathfrak{m}_{2})\oplus
(\mathfrak{k}_{1}\cap \mathfrak{m}_{2})\oplus
(\mathfrak{m}_{1}\cap \mathfrak{k}_{2}).
$$
We define subspaces of $\mathfrak{g}$ as follows:
\begin{align*}
\mathfrak{k}_{0} &= \{ X \in \mathfrak{k}_{1}\cap \mathfrak{k}_{2} \mid [\mathfrak{a}, X] =\{0\}\},\\
V(\mathfrak{k}_{1} \cap \mathfrak{m}_{2}) &= \{ X \in \mathfrak{k}_{1} \cap \mathfrak{m}_{2} \mid [\mathfrak{a}, X] =\{0\}\},\\  
V(\mathfrak{m}_{1} \cap \mathfrak{k}_{2}) &= \{ X \in \mathfrak{m}_{1} \cap \mathfrak{k}_{2} \mid [\mathfrak{a}, X] =\{0\}\}.
\end{align*}
For $\lambda \in \mathfrak{a}$, 
\begin{align*}
\mathfrak{k}_{\lambda} &= \{X \in \mathfrak{k}_{1}\cap \mathfrak{k}_{2} \mid [H,[H,X]] =-\langle \lambda , H \rangle^{2} X \ (H \in \mathfrak{a})\},\\
\mathfrak{m}_{\lambda} &= \{X \in \mathfrak{m}_{1}\cap \mathfrak{m}_{2} \mid [H,[H,X]] =-\langle \lambda , H \rangle^{2} X \ (H \in \mathfrak{a})\},\\
V^{\perp}_{\lambda}(\mathfrak{k}_{1} \cap \mathfrak{m}_{2})
&= \{ X \in \mathfrak{k}_{1} \cap \mathfrak{m}_{2} \mid [H,[H,X]] =-\langle \lambda, H \rangle^{2} X \ (H \in \mathfrak{a})\},\\
V^{\perp}_{\lambda}(\mathfrak{m}_{1} \cap \mathfrak{k}_{2})
&= \{ X \in \mathfrak{m}_{1} \cap \mathfrak{k}_{2} \mid [H,[H,X]] =-\langle \lambda, H \rangle^{2} X \ (H \in \mathfrak{a})\}.
\end{align*}
We set 
\begin{align*}
\Sigma &= \{\lambda \in \mathfrak{a} \setminus \{0\} \mid \mathfrak{k}_{\lambda} \neq \{0\}\},\\ 
W &= \{\alpha \in \mathfrak{a} \setminus \{0\} \mid V^{\perp}_{\alpha}(\mathfrak{k}_{1} \cap \mathfrak{m}_{2}) \neq \{0\}\},\\ 
\tilde{\Sigma} &= \Sigma \cup W.
\end{align*}
It is known that $\dim \mathfrak{k}_{\lambda}= \dim \mathfrak{m}_{\lambda}$
and 
$\dim V^{\perp}_{\lambda}(\mathfrak{k}_{1} \cap \mathfrak{m}_{2}) = \dim V^{\perp}_{\lambda}(\mathfrak{m}_{1} \cap \mathfrak{k}_{2})$
for each $\lambda \in \tilde{\Sigma}$.
Thus we set 
$m(\lambda):=\dim \mathfrak{k}_{\lambda},\ n(\lambda):=\dim V^{\perp}_{\lambda}(\mathfrak{k}_{1} \cap \mathfrak{m}_{2}).$
Notice that $\Sigma$ is the restricted root system of the symmetric pair $((G_{12})_{0}, K_{12})$, 
and $\tilde{\Sigma}$ becomes a root system of $\mathfrak{a}$ (see \cite{I1}).

We define an open subset $\mathfrak{a}_r$ of $\mathfrak{a}$ by
$$
\mathfrak{a}_{r} = \bigcap_{\lambda \in \Sigma, \alpha \in W} \left\{H \in \mathfrak{a} \ \Big| \
\langle \lambda, H \rangle \not\in \pi \mathbb{Z},\ \langle \alpha, H \rangle \not\in \frac{\pi}{2} + \pi \mathbb{Z} \right\}.
$$
A connected component of $\mathfrak{a}_{r}$ is called a {\it cell}.
The {\it affine Weyl group} $\tilde{W}(\tilde{\Sigma}, \Sigma, W)$ of $(\tilde{\Sigma}, \Sigma, W)$ is a subgroup of $\mathrm{Aff}(\mathfrak{a})$ 
generated by 
$$
\left\{\left(s_{\lambda}, \frac{2n\pi}{\langle  \lambda, \lambda \rangle }\lambda\right) \ \bigg| \ \lambda \in \Sigma, n \in \mathbb{Z}\right\}
\cup \left\{\left(s_{\alpha}, \frac{(2n+1)\pi}{\langle  \alpha, \alpha \rangle }\alpha\right) \ \bigg| \ \alpha \in W, n \in \mathbb{Z}\right\}, 
$$
where $\mathrm{Aff}(\mathfrak{a})$ is the affine group of $\mathfrak{a}$ 
which is expressed as the semidirect product $\mathrm{O}(\mathfrak{a}) \ltimes \mathfrak{a}$. 
The action of $\left(s_{\lambda}, (2n\pi/ \langle \lambda , \lambda\rangle )\lambda\right)$ on $\mathfrak{a}$
is the reflection with respect to the hyperplane $\{ H \in \mathfrak{a} \mid \langle \lambda, H \rangle = n\pi \}$, 
and the action of $\left(s_{\alpha}, ((2n+1)\pi/ \langle \alpha,  \alpha\rangle )\alpha\right)$ on $\mathfrak{a}$
is the reflection with respect to the hyperplane $\{ H \in \mathfrak{a} \mid \langle \alpha, H \rangle = (n+1/2)\pi\}$.
The affine Weyl group $\tilde{W}(\tilde{\Sigma}, \Sigma, W)$ acts transitively on the set of all cells.
More precisely, for each cell $P$, it holds that
$$
\mathfrak{a} = \bigcup_{s \in \tilde{W}(\tilde{\Sigma}, \Sigma, W)} s \overline{P}.
$$

For $x = \exp H \ (H \in \mathfrak{a})$,
the orbit $K_2 \cdot \pi_1(x)$ in $N_1$ is regular, so $(K_2 \times K_1) \cdot x$ in $G$ is, if and only if $H \in \mathfrak{a}_{r}$.
Here we call an orbit {\em regular} if it is an orbit of the maximal dimension.
In \cite{I1}, it is proved that $\tilde{W}(\tilde{\Sigma}, \Sigma, W)$ is a subgroup of $\tilde{J}$.
Moreover, if $N_1$ and $N_2$ are simply-connected,
then $\tilde{W}(\tilde{\Sigma}, \Sigma, W) = \tilde{J}$,
hence the orbit space $K_2 \backslash G/K_1$ of the actions can be identified with
$\mathfrak{a}/\tilde{J} = \mathfrak{a}/\tilde{W}(\tilde{\Sigma}, \Sigma, W) \cong \overline{P}$.
Indeed, for each orbit $K_2 \cdot \pi_1(x)$ in $N_1$ and $(K_2 \times K_1) \cdot x$ in $G$,
there exists $H \in \overline{P}$ uniquely so that $x = \exp H$.
An interior point $H$ in $\overline{P}$ corresponds to a regular orbit,
and a point $H$ in the boundary of $\overline{P}$ corresponds to a singular orbit.
In fact, $\overline{P}$ is a closed region in $\mathfrak{a}$, which is a direct product of some simplexes.
Then the cell decomposition of $\overline{P}$ gives a stratification of orbit types of the action.
We should note that, in general,
the cell decomposition of $\overline{P}$ gives a stratification of {\em local} orbit types of the action.
In this paper, we study the biharmonicity of the orbits, that is a local property of a submanifold.
Therefore, without loss of generality,
we may assume that $N_1$ and $N_2$ are simply-connected, hence $K_2 \backslash G/K_1 \cong \overline{P}$.

Ikawa (\cite{I1}) introduced the notion of {\em symmetric triad} with multiplicities
as a generalization of irreducible root system.
For the precise definition of symmetric triad with multiplicities,
we refer to Ikawa's papers (\cite{I1, I2, I3}).
In general, the triad $(\tilde{\Sigma}, \Sigma, W)$,
which is obtained from a compact symmetric triad $(G, K_1, K_2)$,
is not a symmetric triad with multiplicities in the sense of Ikawa.
However, we know the following theorem.
 
\begin{theorem}[\cite{I2} Theorem 3.1, \cite{I3} Theorem 1.14] \label{Theorem:I2 AB}
Let $(G, K_{1}, K_{2})$ be a compact symmetric triad which satisfies one of the following conditions.
\begin{enumerate}
\item[(A)] $G$ is simple and $\theta_1 \not\sim \theta_2$,
i.e. $\theta_{1}$ and $\theta_{2}$ can not be transformed each other by an inner automorphism of $\mathfrak{g}$. 
\item[(B)] There exist a compact connected simple Lie group $U$ and a symmetric subgroup $\overline{K}$ of $U$ such that
$$
G=U\times U, \quad K_{1}=\Delta G= \{(u, u) \mid u \in U\}, \quad K_{2}=\overline{K}\times \overline{K}.
$$
\item[(C)] There exist a compact connected simple Lie group $U$ and an involutive outer automorphism $\sigma$ such that
\begin{align*}
G=U\times U, \quad K_{1}=\Delta G= \{(u, u) \mid u \in U\},\\
 \quad K_{2}=\{(u_{1}, u_{2}) \mid  (\sigma (u_{2}),\ \sigma (u_{1}))=(u_{1}, u_{2})\}.
\end{align*}
\end{enumerate}
Then the triple $(\tilde{\Sigma}, \Sigma, W)$ defined as above is a symmetric triad of $\mathfrak{a}$,
moreover $m(\lambda)$ and $n(\alpha)$ are multiplicities of $\lambda \in \Sigma$ and $\alpha \in W$.
Conversely every symmetric triad is obtained in this way.
\end{theorem}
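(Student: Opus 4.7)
The plan is to verify Ikawa's axioms for $(\tilde{\Sigma}, \Sigma, W)$ directly from the structure theory of commutative compact symmetric triads, and then to establish the converse via a classification argument matching the list of abstract symmetric triads against the list of commutative compact symmetric triads.

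First I would exploit the commutativity $\theta_1\theta_2=\theta_2\theta_1$ to simultaneously diagonalize $\theta_1$ and $\theta_2$ on $\mathfrak{g}$, obtaining the four-part decomposition already recorded in the text. Since $\mathfrak{a}\subset\mathfrak{m}_1\cap\mathfrak{m}_2$ is abelian and each operator $\mathrm{ad}(H)^2$ preserves the four components and is symmetric with respect to $\langle\cdot,\cdot\rangle$, I can simultaneously decompose each component into joint eigenspaces of $\{\mathrm{ad}(H)^2 : H\in\mathfrak{a}\}$. This is exactly the decomposition that defines $\mathfrak{k}_\lambda$, $\mathfrak{m}_\lambda$ and $V_\alpha^\perp(\mathfrak{k}_1\cap\mathfrak{m}_2)$, $V_\alpha^\perp(\mathfrak{m}_1\cap\mathfrak{k}_2)$, and hence $\Sigma$ and $W$. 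The equalities $\dim\mathfrak{k}_\lambda=\dim\mathfrak{m}_\lambda$ and $\dim V_\lambda^\perp(\mathfrak{k}_1\cap\mathfrak{m}_2)=\dim V_\lambda^\perp(\mathfrak{m}_1\cap\mathfrak{k}_2)$ come from the isomorphisms $X\mapsto[H_0,X]$ for a generic $H_0\in\mathfrak{a}$, which swap the two eigenvalue signs of $\theta_1$ or $\theta_2$.

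Next I would verify Ikawa's axioms of a symmetric triad by systematic bracket computations. The core point is that $[\mathfrak{k}_1\cap\mathfrak{k}_2,\mathfrak{k}_1\cap\mathfrak{m}_2]\subset\mathfrak{k}_1\cap\mathfrak{m}_2$ and similar inclusions; combining these with the Jacobi identity one sees that bracketing a $\lambda$-root vector ($\lambda\in\Sigma$) with an $\alpha$-root vector ($\alpha\in W$) lies in the $(\lambda\pm\alpha)$-eigenspace, forcing $\lambda\pm\alpha\in W\cup\{0\}$ whenever the bracket is nonzero. Symmetrically, bracketing two $W$-vectors lands in $\mathfrak{k}_1\cap\mathfrak{k}_2$ or $\mathfrak{m}_1\cap\mathfrak{m}_2$, contributing roots in $\Sigma$. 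From these inclusions I would deduce both the reflection invariances $s_\lambda(\Sigma)\subset\Sigma$, $s_\lambda(W)\subset W$ for $\lambda\in\Sigma$ and $s_\alpha(\Sigma\cup W)\subset\Sigma\cup W$ for $\alpha\in W$, together with the multiplicity invariances $m(s_\mu\lambda)=m(\lambda)$ and $n(s_\mu\alpha)=n(\alpha)$; this gives that $\tilde\Sigma$ is a root system and that $(\tilde\Sigma,\Sigma,W)$ satisfies Ikawa's axioms. In cases (B) and (C), where $G=U\times U$, I would reduce the entire computation to the restricted root system of the simple group $U$ via the diagonal embedding, and $\Sigma$, $W$ then read off directly.

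For the converse, I would appeal to the classification of symmetric triads with multiplicities from Ikawa, which lists them by Dynkin-type data, and then for each entry exhibit a compact symmetric triad of type (A), (B), or (C) that realizes it. The classifications of commutative compact symmetric triads (Matsuki, Ohshima--Sekiguchi) and of abstract symmetric triads are compatible in exactly the way needed, so this matching is a finite case-check using the explicit tables.

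The main obstacle I expect is twofold. On the forward side, the most delicate point is not that $\tilde\Sigma$ is a root system (which is fairly routine once the eigenspace decomposition is set up), but rather the finer axiom governing how $\Sigma$ and $W$ interact, e.g.\ which sums $\lambda+\alpha$ actually lie in $W$; this requires a careful nonvanishing argument for specific brackets, using the $\mathrm{Ad}(G)$-invariant inner product together with the Jacobi identity to rule out degeneracies. On the converse side, the principal difficulty is combinatorial and requires a full case-by-case exhibition; the hypotheses (A), (B), (C) are tailored precisely so that extraneous duplications coming from the product structure $G\times G$ are excluded, and checking that every abstract symmetric triad is realized exactly once by some item on one of these three lists is the technically heaviest part of the proof.
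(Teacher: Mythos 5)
This statement is not proved in the paper at all: it is quoted verbatim from Ikawa (\cite{I2} Theorem 3.1, \cite{I3} Theorem 1.14), so there is no internal proof to compare your proposal against. Judged on its own terms, your outline does follow the broad strategy of Ikawa's argument --- simultaneous diagonalization of $\theta_1,\theta_2$, joint $\mathrm{ad}(H)^2$-eigenspace decomposition of the four blocks, bracket relations to get Weyl-group invariance of $\Sigma$ and $W$, and a classification match for the converse --- but as written it is a plan rather than a proof, and the parts you defer are precisely where the content lies.

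Concretely, the definition of a symmetric triad is not exhausted by ``$\tilde\Sigma$ is a root system and $W$ is invariant under the Weyl group of $\Sigma$.'' The axioms (and the multiplicity conditions the present paper repeatedly uses, e.g.\ in the Type $\rm{I\mathchar`-BC}_{2}\rm{\mathchar`-B}_{2}$ and $\rm{II\mathchar`-BC}_{2}$ computations) include parity-sensitive statements: for $\lambda\in\Sigma$ and $\alpha\in W$, whether $s_\lambda\alpha$ and $\alpha+\lambda$ land in $W$ or in $\Sigma$, and the identities of the form $m(\lambda)=n(\alpha)$ when $2\langle\lambda,\alpha\rangle/\langle\alpha,\alpha\rangle$ is odd. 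Your bracket argument shows containments of the type $[\mathfrak{k}_\lambda, V^\perp_\alpha(\mathfrak{k}_1\cap\mathfrak{m}_2)]\subset$ (sum of $(\lambda\pm\alpha)$-spaces), but it does not produce the nonvanishing statements or the dimension equalities across the $\Sigma$/$W$ divide; for those one needs the finer mechanism Ikawa uses (the action of $\mathrm{Ad}(\exp H)$ for special $H$ relating the $\theta_1$- and $\theta_2$-eigenspace decompositions, which is what forces the half-integer shifts in $W$ and the multiplicity coincidences). Likewise, the converse is a genuine finite but heavy classification, and the role of hypotheses (A), (B), (C) in eliminating the reducible/duplicated cases must actually be checked entry by entry. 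So the proposal identifies the right skeleton but leaves the two load-bearing steps as acknowledged gaps rather than closing them.
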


When $(G, K_{1}, K_{2})$ satisfies (A), (B) or (C) in Theorem~\ref{Theorem:I2 AB},
hence $(\tilde{\Sigma}, \Sigma, W)$ is a symmetric triad,
we take a fundamental system $\tilde{\Pi}$ of $\tilde{\Sigma}$.
We denote by $\tilde{\Sigma}^{+}$ the set of positive roots in $\tilde{\Sigma}$.
Set $\Sigma^{+} = \tilde{\Sigma}^{+} \cap \Sigma$ and $W^{+} = \tilde{\Sigma}^{+} \cap W$.
Denote by $\Pi$ the set of simple roots of $\Sigma$.
We set 
$$
W_{0} = \{\alpha \in W^{+} \mid \alpha + \lambda \not\in W \ (\lambda \in \Pi) \}.
$$
From the classification of symmetric triads,
we have that $W_{0}$ consists of the only one element, denoted by $\tilde{\alpha}$. 
We define an open subset $P_0$ of $\mathfrak{a}$ by
\begin{equation} \label{eq:cell_of_Hermann_action}
P_{0} = \left\{H \in \mathfrak{a} \ \Big| \ \langle \tilde{\alpha}, H \rangle < \frac{\pi}{2},\
\langle \lambda, H \rangle >0 \ (\lambda \in \Pi) \right\}.
\end{equation}
Then $P_{0}$ is a cell. 
For a nonempty subset $\Delta \subset \Pi \cup \{\tilde{\alpha}\}$, 
set
\begin{align*}
P_{0}^{\Delta}=\left\{ H \in \overline{P}_{0} \left|
\begin{array}{c}
\langle \lambda , H \rangle > 0 \ (\lambda \in \Delta \cap \Pi )\\
\langle \mu , H \rangle = 0 \ (\mu \in \Pi \setminus \Delta ) \\
\langle \tilde{\alpha}, H \rangle 
\begin{cases}
 < (\pi/2) \ (\text{if}\ \tilde{\alpha} \in \Delta )\\
 = (\pi/2) \ (\text{if}\ \tilde{\alpha} \not\in \Delta )
\end{cases}
\end{array}
\right. \right\}.
\end{align*}
Then 
\begin{equation} \label{cell_decomposition_of_Hermann_action}
\overline{P}_{0}=\bigcup_{\Delta \subset \Pi \cup \{\tilde{\alpha}\}} P_{0}^{\Delta} \ (\text{disjoint union}).
\end{equation}

When $(G, K_1, K_2)$ satisfies $\theta_1 \sim \theta_2$,
i.e. $\theta_{1}$ and $\theta_{2}$ are transformed each other by an inner automorphism of $\mathfrak{g}$,
the Hermann action of $K_2$ on the compact symmetric space $G/K_1$ is equivalent of
the action of the isotropy group $K_1$ on $G/K_1$.
Hence we may assume that $\theta_1 = \theta_2$, and so $K_1 = K_2$.
Since $W = \emptyset$, then $(\tilde{\Sigma}, \Sigma, W)$ is not a symmetric triad,
and $\tilde{\Sigma} = \Sigma$ is the restricted root system of $(G, K_1)$.
In this case, we can describe the orbit space of $K_1$-action on $G/K_1$
in terms of the restricted root system $\Sigma$ of $(G, K_1)$.
For simplicity, here we assume that $G$ is simple.
We take a fundamental system $\Pi$ of $\Sigma$,
and denote the set of positive roots by $\Sigma^+$ and the highest root by $\delta$.
We define an open subset $P_0$ of $\mathfrak{a}$ by
\begin{equation} \label{eq:cell_of_isotropy_action}
P_{0} = \left\{H \in \mathfrak{a} \ \Big| \ \langle \delta, H \rangle < \pi,\
\langle \lambda, H \rangle >0 \ (\lambda \in \Pi) \right\}.
\end{equation}
Then $P_{0}$ is a cell. 
For a nonempty subset $\Delta \subset \Pi \cup \{\delta\}$, 
set
\begin{align*}
P_{0}^{\Delta}=\left\{ H \in \overline{P}_{0} \left|
\begin{array}{c}
\langle \lambda, H \rangle > 0 \ (\lambda \in \Delta \cap \Pi)\\
\langle \mu, H \rangle = 0 \ (\mu \in \Pi \setminus \Delta) \\
\langle \delta, H \rangle 
\begin{cases}
 < \pi \ (\text{if}\ \delta \in \Delta )\\
 = \pi \ (\text{if}\ \delta \not\in \Delta)
\end{cases}
\end{array}
\right. \right\}.
\end{align*}
Then 
\begin{equation} \label{eq:cell_decomposition_of_isotropy_action}
\overline{P}_{0}=\bigcup_{\Delta \subset \Pi \cup \{\delta\}} P_{0}^{\Delta} \ (\text{disjoint union}).
\end{equation}

\subsection{Second fundamental forms of orbits }

We express the second fundamental forms and mean curvature vector fields
of orbits of commutative Hermann actions
and their associated $K_2 \times K_1$-actions.

Here, let $G$ be a compact connected semisimple Lie group and $(G, K_1, K_2)$ a commutative compact symmetric triad.
Fix a maximal abelian subspace $\mathfrak{a}$ in $\mathfrak{m}_1 \cap \mathfrak{m}_2$,
then we have a triad $(\tilde{\Sigma}, \Sigma, W)$.
We take a fundamental system $\tilde{\Pi}$ of $\tilde{\Sigma}$,
and set 
$$
\tilde{\Sigma}^{+}=\{\lambda \in \tilde{\Sigma} \mid \lambda >0\},\quad
\Sigma^{+}=\Sigma \cap \tilde{\Sigma}^{+},\quad
W^{+}=W\cap \tilde{\Sigma}^{+}.
$$
Then we have an orthogonal direct sum decomposition of $\mathfrak{g}$:
\begin{align*}
\mathfrak{g} = \mathfrak{k}_{0} \oplus \sum_{\lambda \in \Sigma^{+}} \mathfrak{k}_{\lambda}
\oplus \mathfrak{a} \oplus \sum_{\lambda \in \Sigma^{+}} \mathfrak{m}_{\lambda}
&\oplus V(\mathfrak{k}_{1} \cap \mathfrak{m}_{2}) \oplus \sum_{\alpha \in W^{+}} V^{\perp}_{\alpha}(\mathfrak{k}_{1} \cap \mathfrak{m}_{2})\\
&\oplus V(\mathfrak{m}_{1} \cap \mathfrak{k}_{2}) \oplus \sum_{\alpha \in W^{+}} V^{\perp}_{\alpha}(\mathfrak{m}_{1} \cap \mathfrak{k}_{2}).
\end{align*}

According to the above orthogonal direct sum decomposition of $\mathfrak{g}$,
we have the following lemma.

\begin{lemma}[\cite{I1} Lemmas 4.3 and 4.16] \label{onb} \quad
{\rm (1)} For each $\lambda \in \Sigma^{+}$,
there exist orthonormal bases $\{S_{\lambda, i}\}_{i=1}^{m(\lambda)}$
and $\{T_{\lambda, i}\}_{i=1}^{m(\lambda)}$ of $\mathfrak{k}_{\lambda}$ and $\mathfrak{m}_{\lambda}$ respectively such that 
for any $H \in \mathfrak{a}$, 
$$
[H, S_{\lambda, i}] = \langle \lambda, H \rangle T_{\lambda, i},\quad
[H, T_{\lambda, i}] = -\langle \lambda, H \rangle S_{\lambda , i},\quad
[S_{\lambda, i}, T_{\lambda, i}] = \lambda,
$$
\begin{align*}
\mathrm{Ad}(\exp H) S_{\lambda, i} &= \cos \langle \lambda, H \rangle S_{\lambda, i} + \sin \langle \lambda, H \rangle T_{\lambda, i},\\
\mathrm{Ad}(\exp H) T_{\lambda, i} &= -\sin \langle \lambda, H \rangle S_{\lambda, i} + \cos \langle \lambda, H \rangle T_{\lambda, i}.
\end{align*}
{\rm (2)} For each $\alpha \in W^{+}$,
there exist orthonormal bases $\{X_{\alpha, j}\}_{j=1}^{n(\alpha)}$
and $\{Y_{\alpha, j}\}_{j=1}^{n(\alpha)}$ of $V^{\perp}_{\alpha}(\mathfrak{k}_{1} \cap \mathfrak{m}_{2})$
and $V^{\perp}_{\alpha}(\mathfrak{m}_{1} \cap \mathfrak{k}_{2})$ respectively such that for any $H \in \mathfrak{a}$
$$
[H, X_{\alpha, j}] = \langle \alpha, H \rangle Y_{\alpha, j},\quad
[H, Y_{\alpha, j}] = -\langle \alpha, H \rangle X_{\alpha, j},\quad
[X_{\alpha, j}, Y_{\alpha, j}] = \alpha,
$$
\begin{align*}
\mathrm{Ad}(\exp H) X_{\alpha, j} &= \cos \langle \alpha, H \rangle X_{\alpha, j} + \sin \langle \alpha, H \rangle Y_{\alpha, j},\\
\mathrm{Ad}(\exp H) Y_{\alpha, j} &= -\sin \langle \alpha, H \rangle X_{\alpha, j} + \cos \langle \alpha, H \rangle Y_{\alpha, j}.
\end{align*}
\end{lemma}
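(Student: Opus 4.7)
The plan is to prove (1) and (2) in a parallel fashion, since both deal with two-dimensional $\mathrm{ad}(\mathfrak{a})$-invariant subspaces on which $\mathrm{ad}(H)$ acts by rotation. I focus on (1); part (2) will be literally the same computation with the $\theta_i$-parities of $X_{\alpha,j}$, $Y_{\alpha,j}$ replacing those of $S_{\lambda,i}$, $T_{\lambda,i}$.

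First I would verify that $\mathrm{ad}(H)$ maps $\mathfrak{k}_\lambda$ into $\mathfrak{m}_\lambda$. For $X\in\mathfrak{k}_\lambda$ and $H\in\mathfrak{a}$, both $\theta_i([H,X])=[\theta_i H,\theta_i X]=[-H,X]=-[H,X]$ hold ($i=1,2$), so $[H,X]\in\mathfrak{m}_1\cap\mathfrak{m}_2$; the $\mathrm{ad}(H')^2$-eigenvalue is preserved because $[H,H']=0$ allows one to pull $\mathrm{ad}(H')^2$ through $\mathrm{ad}(H)$. Next I would exploit the fact that $\mathrm{ad}(H')$ is skew-symmetric with respect to $\langle\cdot,\cdot\rangle$, so $\mathrm{ad}(H')^2$ is negative semi-definite; consequently, for $H'\in\mathfrak{a}$ with $\langle\lambda,H'\rangle=0$, the relation $\mathrm{ad}(H')^2X=0$ forces $[H',X]=0$ for all $X\in\mathfrak{k}_\lambda$. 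This is the key observation making the construction coordinate-free in $H$.

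Now choose $H_0\in\mathfrak{a}$ with $\langle\lambda,H_0\rangle=1$ (possible since $\lambda\ne 0$). Pick any orthonormal basis $\{S_{\lambda,i}\}_{i=1}^{m(\lambda)}$ of $\mathfrak{k}_\lambda$ and define $T_{\lambda,i}:=[H_0,S_{\lambda,i}]\in\mathfrak{m}_\lambda$. Using $\mathrm{ad}(H_0)^2=-\mathrm{Id}$ on $\mathfrak{k}_\lambda$ and the skew-symmetry of $\mathrm{ad}(H_0)$, a direct inner product computation yields
\[
\langle T_{\lambda,i},T_{\lambda,j}\rangle=\langle[H_0,S_{\lambda,i}],[H_0,S_{\lambda,j}]\rangle=-\langle S_{\lambda,i},\mathrm{ad}(H_0)^2 S_{\lambda,j}\rangle=\delta_{ij},
\]
so $\{T_{\lambda,i}\}$ is orthonormal in $\mathfrak{m}_\lambda$, and it is a basis since $\dim\mathfrak{k}_\lambda=\dim\mathfrak{m}_\lambda=m(\lambda)$. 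For any $H\in\mathfrak{a}$, decompose $H=\langle\lambda,H\rangle H_0+H'$ with $\langle\lambda,H'\rangle=0$; the observation above gives $[H',S_{\lambda,i}]=0$, hence $[H,S_{\lambda,i}]=\langle\lambda,H\rangle T_{\lambda,i}$. Applying $\mathrm{ad}(H)$ to $T_{\lambda,i}=[H_0,S_{\lambda,i}]$ and using $[H,H_0]=0$ together with $\mathrm{ad}(H_0)^2S_{\lambda,i}=-S_{\lambda,i}$ gives $[H,T_{\lambda,i}]=-\langle\lambda,H\rangle S_{\lambda,i}$.

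For the bracket $[S_{\lambda,i},T_{\lambda,i}]$, the $\theta_i$-parities of $S$ and $T$ force $[S_{\lambda,i},T_{\lambda,i}]\in\mathfrak{m}_1\cap\mathfrak{m}_2$; the Jacobi identity together with the already-established bracket formulas shows it commutes with all of $\mathfrak{a}$, hence lies in $\mathfrak{a}$ by maximality. Pairing against an arbitrary $H\in\mathfrak{a}$ gives
\[
\langle[S_{\lambda,i},T_{\lambda,i}],H\rangle=-\langle S_{\lambda,i},[H,T_{\lambda,i}]\rangle=\langle\lambda,H\rangle\langle S_{\lambda,i},S_{\lambda,i}\rangle=\langle\lambda,H\rangle,
\]
so $[S_{\lambda,i},T_{\lambda,i}]=\lambda$. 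Finally, the two commutation formulas state that on $\mathrm{span}\{S_{\lambda,i},T_{\lambda,i}\}$ the operator $\mathrm{ad}(H)$ is the infinitesimal rotation of angle $\langle\lambda,H\rangle$, so $\mathrm{Ad}(\exp H)=\exp(\mathrm{ad}\,H)$ yields the stated cosine/sine formulas. The only genuine subtlety is the step that $\mathrm{ad}(H')$ annihilates $\mathfrak{k}_\lambda$ whenever $\langle\lambda,H'\rangle=0$; once that is in hand the rest is essentially formal. The argument for (2) is identical after noting that $[X_{\alpha,j},Y_{\alpha,j}]$ again lies in $\mathfrak{m}_1\cap\mathfrak{m}_2$ because of the complementary parities of $X_{\alpha,j}\in\mathfrak{k}_1\cap\mathfrak{m}_2$ and $Y_{\alpha,j}\in\mathfrak{m}_1\cap\mathfrak{k}_2$.
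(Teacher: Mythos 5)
Your argument is correct, and it is the standard proof of this kind of root-space lemma: the paper itself gives no proof (it simply cites Ikawa's Lemmas 4.3 and 4.16), so your write-up supplies a valid self-contained derivation. All the key steps check out — the skew-symmetry of $\mathrm{ad}(H')$ forcing $[H',\mathfrak{k}_\lambda]=0$ when $\langle\lambda,H'\rangle=0$, the normalization via $H_0$, the maximality argument placing $[S_{\lambda,i},T_{\lambda,i}]$ in $\mathfrak{a}$, and the identification with $\lambda$ by pairing against $\mathfrak{a}$ — and the parity bookkeeping for part (2) is right. The only external input you use is $\dim\mathfrak{k}_\lambda=\dim\mathfrak{m}_\lambda$, which the paper also quotes as known and which in any case follows from your own construction, since $\mathrm{ad}(H_0)$ and $-\mathrm{ad}(H_0)$ are mutually inverse maps between the two spaces.
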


First, for $x \in G$, we consider an orbit $K_2 \cdot \pi_1(x)$ of the commutative Hermann action of $K_2$ on $N_1$.
Without loss of generality we can assume that $x = \exp H $ where $H \in \mathfrak{a}$,
since $\pi_1(\exp \mathfrak{a})$ is a section of the action.
For $H \in \mathfrak{a}$, we set
\begin{align*}
\Sigma_{H}&=\{ \lambda \in \Sigma \mid \langle \lambda, H \rangle \in \pi\mathbb{Z} \}, \quad 
W_{H}=\{\alpha \in W \mid \langle \alpha , H \rangle \in (\pi/2) +\pi \mathbb{Z} \}, \\
\tilde{\Sigma}_{H}&=\Sigma_{H}\cup W_{H}, \quad
\Sigma_{H}^{+}= \Sigma^{+}\cap \Sigma_{H}, \quad
W_{H}^{+}=W^{+}\cap W_{H}, \quad
\tilde{\Sigma}_{H}^{+}=\Sigma_{H}^{+}\cup W_{H}^{+}.
\end{align*}
Then the tangent space
\begin{align*}
T_{\pi_1(x)}(N_1)
&= dL_{x}(\mathfrak{m}_1) = dL_{x} \big( (\mathfrak{m}_1 \cap \mathfrak{m}_2 ) \oplus (\mathfrak{m}_1 \cap \mathfrak{k}_2 ) \big) \\
&= dL_{x} \left( \mathfrak{a} \oplus \sum_{\lambda \in \Sigma^{+}} \mathfrak{m}_{\lambda}
\oplus V(\mathfrak{m}_{1} \cap \mathfrak{k}_{2})
\oplus \sum_{\alpha \in W^{+}} V^{\perp}_{\alpha}(\mathfrak{m}_{1} \cap \mathfrak{k}_{2}) \right)
\end{align*}
of $N_1$ at $\pi_1(x)$ is decomposed to
the tangent space $T_{\pi_{1}(x)}(K_{2}\cdot \pi_{1}(x))$
and the normal space $T^{\perp}_{\pi_{1}(x)}(K_{2}\cdot \pi_{1}(x))$
of the orbit $K_{2}\cdot \pi_{1}(x)$ as follows.
\begin{align*}
T_{\pi_{1}(x)}(K_{2}\cdot \pi_{1}(x))
&= \left\{ \left. \left. \frac{d}{dt} \exp(tX_{2})\cdot \pi_{1}(x) \right|_{t=0} \ \right| X_{2}\in \mathfrak{k}_{2}\right\} \\ 
&= dL_{x}(d\pi_{1}(\mathrm{Ad}(x)^{-1} \mathfrak{k}_{2})) \\
&= dL_{x}\left( 
\sum_{\lambda \in \Sigma^{+}\setminus \Sigma_{H}} \mathfrak{m}_{\lambda} 
\oplus V(\mathfrak{m}_{1}\cap \mathfrak{k}_{2}) 
\oplus \sum_{\alpha  \in W^{+}     \setminus W_{H}}      V_{\alpha}^{\perp}(\mathfrak{m}_{1}\cap \mathfrak{k}_{2})  \right),\\
T^{\perp}_{\pi_{1}(x)}(K_{2}\cdot \pi_{1}(x))
&= dL_{x}((\mathrm{Ad}(x)^{-1}\mathfrak{m}_{2}) \cap \mathfrak{m}_{1})\\
&= dL_{x}
\left( 
\mathfrak{a} \oplus 
\sum_{\lambda \in \Sigma_{H}^{+}} \mathfrak{m}_{\lambda} \oplus 
\sum_{\alpha  \in W_{H}^{+}} V_{\alpha}^{\perp}(\mathfrak{m}_{1}\cap \mathfrak{k}_{2}) \right).
\end{align*}
From Lemma \ref{onb}, Ikawa proved the following theorems.
\begin{theorem}[\cite{I1} Lemma 4.22] \label{2nd}
For $x = \exp H \ (H \in \mathfrak{a})$, 
we denote the second fundamental forms of the orbits $K_{2}\cdot \pi_{1}(x)$ in $N_{1}$ by $B_{H}^1$.
Then we have
\begin{enumerate}
\item $dL_{x}^{-1}B_{H}^1 (dL_{x}(T_{\lambda, i}), dL_{x}(T_{\mu, j})) = \cot(\langle \mu, H \rangle) [T_{\lambda, i}, S_{\mu, j}]^{\perp}$,
\item $dL_{x}^{-1}B_{H}^1 (dL_{x}(Y_{\alpha, i}), dL_{x}(Y_{\beta, j})) = -\tan(\langle \beta, H \rangle) [Y_{\alpha, i}, X_{\beta, j}]^{\perp}$,
\item $B_{H}^1 (dL_{x}(Y_{1}), dL_{x}(Y_{2}))=0$,
\item $B_{H}^1 (dL_{x}(T_{\lambda, i}), dL_{x}(Y_2)) = 0$,
\item $B_{H}^1 (dL_{x}(Y_{\alpha, i}), dL_{x}(Y_2)) = 0$,
\item $dL_{x}^{-1}B_{H}^1 (dL_{x}(T_{\lambda, i}), dL_{x}(Y_{\beta, j})) = -\tan(\langle \beta, H \rangle) [T_{\lambda, i}, X_{\beta, j}]^{\perp}$,
\end{enumerate}
for
\begin{align*}
&\lambda, \mu \in \Sigma^{+} \ \text{with} \ \langle \lambda, H \rangle, \langle \mu, H \rangle \not\in \pi \mathbb{Z},\ 1 \leq i \leq m(\lambda),\ 1 \leq j \leq m(\mu),\\
&\alpha, \beta \in W^{+} \ \text{with} \ \langle \alpha, H \rangle, \langle \beta, H \rangle \not\in \frac{\pi}{2} + \pi \mathbb{Z},\ 1 \leq i \leq n(\alpha),\ 1 \leq j \leq n(\beta),\\
&Y_{1}, Y_{2} \in V(\mathfrak{m}_{1} \cap \mathfrak{k}_{2}).
\end{align*}
Here $X^{\perp}$ is the normal component, i.e. $(\mathrm{Ad}(x^{-1}) \mathfrak{m}_{2}) \cap \mathfrak{m}_{1}$-component,
of a tangent vector $X \in \mathfrak{m}_{1}$. 
\end{theorem}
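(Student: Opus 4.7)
The plan is to realize each tangent vector appearing in the statement as a scalar multiple of the value at $p = \pi_1(x)$ of a fundamental vector field of the $K_2$-action, then to express the second fundamental form via covariant derivatives of these Killing vector fields, and finally to carry out the bracket computations using Lemma~\ref{onb}.

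I begin by introducing the Killing vector fields: for $Z \in \mathfrak{k}_2$, let $Z^*$ denote the vector field on $N_1$ defined by $Z^*_q = \tfrac{d}{dt}\big\vert_{t=0}\exp(tZ)\cdot q$, which is tangent to every $K_2$-orbit. Writing $x = \exp H$ and using $\exp(tZ)x = x\exp(t\operatorname{Ad}(x^{-1})Z)$, the chain rule gives $Z^*_{p} = dL_x\bigl(\operatorname{Ad}(x^{-1})Z\bigr)_{\mathfrak{m}_1}$, the subscript denoting projection to the $\mathfrak{m}_1$-component. Applying this to the bases $\{S_{\lambda,i}\},\{Y_{\alpha,j}\} \subset \mathfrak{k}_2$ and using the adjoint formulas of Lemma~\ref{onb}, one obtains
\begin{align*}
(S_{\lambda,i})^*_p &= -\sin\langle\lambda,H\rangle\,dL_x(T_{\lambda,i}), \\
(Y_{\alpha,j})^*_p &= \phantom{-}\cos\langle\alpha,H\rangle\,dL_x(Y_{\alpha,j}),
\end{align*}
together with $(Y_1)^*_p = dL_x(Y_1)$ for $Y_1 \in V(\mathfrak{m}_1 \cap \mathfrak{k}_2)$. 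Under the hypotheses on $\lambda,\mu,\alpha,\beta$, none of these scalars vanish, so each tangent vector appearing in the theorem is a genuine rescaling of a Killing vector field value.

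Since the $Z^*$'s are tangent to the orbit $K_2 \cdot p$, one has $B^1_H((Z_1)^*_p, (Z_2)^*_p) = (\overline{\nabla}_{Z_1^*}Z_2^*)^\perp_p$, where the normal component is taken with respect to the tangent/normal splitting recorded before the statement. Using that $L_x$ is an isometry of $N_1$ with $(L_x)_* W^* = (\operatorname{Ad}(x)W)^*$, this reduces to the analogous computation at the origin $o=\pi_1(e)$ after replacing each $Z_i$ by $\operatorname{Ad}(x^{-1})Z_i$; at $o$ a standard identity on the Riemannian symmetric space $N_1$ expresses $(\overline{\nabla}_{U^*}V^*)_o$ in terms of the $\mathfrak{k}_1$- and $\mathfrak{m}_1$-components of $U, V \in \mathfrak{g}$ and their Lie brackets. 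Projecting onto the pulled-back normal space $(\operatorname{Ad}(x^{-1})\mathfrak{m}_2)\cap\mathfrak{m}_1$ and dividing by the normalization constants above produces, in each of cases (1), (2) and (6), a trigonometric coefficient of the form $\cos/\sin$ or $\sin/\cos$ times a bracket $[\,\cdot\,,\,\cdot\,]^\perp$, as claimed. Cases (3)--(5) are uniform: since $Y_2 \in V(\mathfrak{m}_1 \cap \mathfrak{k}_2)$ centralizes $\mathfrak{a}$, one has $\operatorname{Ad}(x^{-1})Y_2 = Y_2 \in \mathfrak{k}_2$, and a direct inspection of the resulting brackets shows that they lie entirely in the tangent space of the orbit, whence the normal components vanish.

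The main obstacle is the projection onto the normal space: the twisted subspace $(\operatorname{Ad}(x^{-1})\mathfrak{m}_2)\cap\mathfrak{m}_1$ is not a sum of the distinguished root spaces $\mathfrak{m}_\lambda$ and $V^\perp_\alpha(\mathfrak{m}_1 \cap \mathfrak{k}_2)$, so one must carefully use the rotation formulas of Lemma~\ref{onb} to re-express a basis of it and to split each bracket into tangent and normal parts. Keeping the trigonometric coefficients straight through these rewrites, and verifying the uniform cancellation in (3)--(5), is where the bulk of the technical work lies.
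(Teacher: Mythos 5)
Your proposal is correct: the normalizations $(S_{\lambda,i})^*_p=-\sin\langle\lambda,H\rangle\,dL_x(T_{\lambda,i})$ and $(Y_{\alpha,j})^*_p=\cos\langle\alpha,H\rangle\,dL_x(Y_{\alpha,j})$ check out, and combining them with the symmetric-space identity $(\overline{\nabla}_{U^*}V^*)_o=-[U_{\mathfrak{m}_1},V_{\mathfrak{k}_1}]$ applied to $U=\mathrm{Ad}(x^{-1})Z_1$, $V=\mathrm{Ad}(x^{-1})Z_2$ reproduces exactly the coefficients $\cot\langle\mu,H\rangle$ and $-\tan\langle\beta,H\rangle$ in (1), (2), (6) and gives $0$ in (3)--(5) since the $\mathfrak{k}_1$-component of $\mathrm{Ad}(x^{-1})Y_2$ vanishes. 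This is essentially the argument behind the cited Lemma 4.22 of Ikawa (the paper itself imports the statement without proof); note only that your worry about explicitly decomposing $(\mathrm{Ad}(x^{-1})\mathfrak{m}_2)\cap\mathfrak{m}_1$ is moot, since the theorem leaves the projection $[\,\cdot\,,\,\cdot\,]^{\perp}$ symbolic.
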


\begin{theorem}[\cite{I1} Corollaries 4.23, 4.29, 4.24, and \cite{GT} Theorem 5.3] \label{Theorem3.2}
For $x = \exp H \ (H \in \mathfrak{a})$,
we denote the mean curvature vector field of $K_{2}\cdot \pi_{1}(x)$ in $N_{1}$ by $\tau^{1}_{H}$.
Then we have
$$
dL_{x}^{-1}(\tau^{1}_{H})_{\pi_{1}(x)}
= -\sum_{\lambda \in \Sigma^{+} \setminus \Sigma_{H}} m(\lambda ) \cot \langle \lambda, H \rangle \lambda 
+ \sum_{\alpha \in W^{+} \setminus W_{H}} n(\alpha) \tan \langle \alpha, H \rangle \alpha. 
$$
\end{theorem}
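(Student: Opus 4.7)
The plan is to obtain the mean curvature vector field $\tau^1_H$ simply by taking the trace of the second fundamental form $B^1_H$ furnished by Theorem~\ref{2nd} against an orthonormal basis of the tangent space $T_{\pi_1(x)}(K_2 \cdot \pi_1(x))$. The explicit decomposition of this tangent space, together with Lemma~\ref{onb}, already provides us with such a basis on the nose: since the inner product $\langle \cdot, \cdot\rangle$ on $G$ is bi-invariant, the isometry $dL_x$ carries the orthonormal bases $\{T_{\lambda,i}\}_{i=1}^{m(\lambda)}$ of $\mathfrak{m}_\lambda$ (for $\lambda \in \Sigma^+ \setminus \Sigma_H$), any fixed orthonormal basis of $V(\mathfrak{m}_1 \cap \mathfrak{k}_2)$, and $\{Y_{\alpha,j}\}_{j=1}^{n(\alpha)}$ of $V^\perp_\alpha(\mathfrak{m}_1 \cap \mathfrak{k}_2)$ (for $\alpha \in W^+ \setminus W_H$) to an orthonormal basis of the orbit's tangent space at $\pi_1(x)$.

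With that basis in hand, I would split $\tau^1_H$ into three partial traces according to the three summands above. The middle one, involving basis vectors from $V(\mathfrak{m}_1 \cap \mathfrak{k}_2)$, vanishes identically by item (3) of Theorem~\ref{2nd}. For the first partial trace, item (1) of Theorem~\ref{2nd} with $\lambda=\mu$ and $i=j$ gives
\begin{equation*}
dL_x^{-1} B^1_H\bigl(dL_x(T_{\lambda,i}), dL_x(T_{\lambda,i})\bigr) = \cot\langle \lambda, H\rangle\, [T_{\lambda,i}, S_{\lambda,i}]^\perp,
\end{equation*}
and Lemma~\ref{onb}(1) yields $[T_{\lambda,i}, S_{\lambda,i}] = -\lambda \in \mathfrak{a}$, which lies entirely in the normal space (recall $\mathfrak{a} \subset T^\perp_{\pi_1(x)}(K_2\cdot\pi_1(x))$), so the normal projection is a no-op and the summand equals $-\cot\langle\lambda,H\rangle\,\lambda$; adding over $i$ contributes a factor $m(\lambda)$. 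Analogously, item (2) of Theorem~\ref{2nd} with $\alpha=\beta$ and $i=j$ together with Lemma~\ref{onb}(2), which gives $[Y_{\alpha,j}, X_{\alpha,j}] = -\alpha \in \mathfrak{a}$, produces the summand $+\tan\langle\alpha,H\rangle\,\alpha$, again with a multiplicity $n(\alpha)$.

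Assembling these three pieces yields precisely the stated formula. No obstacle of any depth remains once Theorem~\ref{2nd} and Lemma~\ref{onb} are in place; the only subtleties are bookkeeping ones, namely (i) checking that the tangent-space decomposition really is orthogonal with respect to the bi-invariant metric, so that restricting the $\{T_{\lambda,i}\}$, $\{Y_{\alpha,j}\}$ to indices outside $\Sigma_H$, $W_H$ gives an honest orthonormal basis (this is immediate from the mutual orthogonality of the root spaces), and (ii) confirming that the brackets $[T_{\lambda,i}, S_{\lambda,i}]$ and $[Y_{\alpha,j}, X_{\alpha,j}]$, which land in $\mathfrak{a}$, are already normal, so that writing $(\cdot)^\perp$ has no effect. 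Both are direct, so the computation is essentially a transcription of Theorem~\ref{2nd} under the trace.
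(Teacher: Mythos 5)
Your proposal is correct: tracing the second fundamental form of Theorem~\ref{2nd} over the orthonormal basis supplied by Lemma~\ref{onb} and the tangent-space decomposition, using $[T_{\lambda,i},S_{\lambda,i}]=-\lambda$ and $[Y_{\alpha,j},X_{\alpha,j}]=-\alpha$ with $\mathfrak{a}$ lying in the normal space, gives exactly the stated formula with the right signs and multiplicities. This is precisely how the cited result is obtained (the paper imports it from Ikawa's Lemma 4.22 and its corollaries without reproving it), so your argument matches the intended derivation.
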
 
We can also apply Theorem~\ref{Theorem3.2} for the orbit $K_{1}\cdot \pi_{2}(x)$ in $N_{2}$. 
Thus we have the following corollary.
\begin{corollary}[\cite{I1} Corollary 4.30]
The orbit $K_{2}\cdot \pi_{1}(x)$ is minimal if and only if 
$K_{1}\cdot \pi_{2}(x)$ is minimal. 
\end{corollary}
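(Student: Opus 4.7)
The plan is to prove the corollary by invoking Theorem~\ref{Theorem3.2} twice, once for each action, and comparing the two mean curvature formulas in the common space $\mathfrak{a}$. First I would write $x=\exp H$ with $H\in\mathfrak{a}$ (allowed since $\pi_{1}(\exp\mathfrak{a})$ and $\pi_{2}(\exp\mathfrak{a})$ are sections of the respective actions) and read off from Theorem~\ref{Theorem3.2} that
\[
dL_{x}^{-1}(\tau^{1}_{H})_{\pi_{1}(x)}
= -\sum_{\lambda \in \Sigma^{+} \setminus \Sigma_{H}} m(\lambda) \cot \langle \lambda, H \rangle \lambda
+ \sum_{\alpha \in W^{+} \setminus W_{H}} n(\alpha) \tan \langle \alpha, H \rangle \alpha.
\]

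Next I would apply the same theorem to the orbit $K_{1}\cdot \pi_{2}(x)$ in $N_{2}$, that is, with the roles of $(K_{1},\theta_{1})$ and $(K_{2},\theta_{2})$ interchanged. The key observation is that the entire symmetric-triad data needed to state the formula is invariant under this interchange: the decomposition $\mathfrak{g}=(\mathfrak{k}_{1}\cap\mathfrak{k}_{2})\oplus(\mathfrak{m}_{1}\cap\mathfrak{m}_{2})\oplus(\mathfrak{k}_{1}\cap\mathfrak{m}_{2})\oplus(\mathfrak{m}_{1}\cap\mathfrak{k}_{2})$ is symmetric in $K_{1},K_{2}$; the subspaces $\mathfrak{k}_{\lambda},\mathfrak{m}_{\lambda}$ and the set $\Sigma$ depend only on $\mathfrak{k}_{1}\cap\mathfrak{k}_{2}$ and $\mathfrak{m}_{1}\cap\mathfrak{m}_{2}$; and for $W$, interchanging $K_{1}$ and $K_{2}$ swaps $V^{\perp}_{\alpha}(\mathfrak{k}_{1}\cap\mathfrak{m}_{2})$ with $V^{\perp}_{\alpha}(\mathfrak{m}_{1}\cap\mathfrak{k}_{2})$, while their common dimension $n(\alpha)$ (already recorded before Theorem~\ref{Theorem:I2 AB}) is preserved. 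The sets $\Sigma_{H},W_{H}$ depend only on $H$ and the roots.

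Consequently Theorem~\ref{Theorem3.2} applied to the $K_{1}$-action on $N_{2}$ yields the same expression,
\[
dR_{x}^{-1}(\tau^{2}_{H})_{\pi_{2}(x)}
= -\sum_{\lambda \in \Sigma^{+} \setminus \Sigma_{H}} m(\lambda) \cot \langle \lambda, H \rangle \lambda
+ \sum_{\alpha \in W^{+} \setminus W_{H}} n(\alpha) \tan \langle \alpha, H \rangle \alpha,
\]
so $dL_{x}^{-1}(\tau^{1}_{H})_{\pi_{1}(x)}$ and $dR_{x}^{-1}(\tau^{2}_{H})_{\pi_{2}(x)}$ agree as elements of $\mathfrak{a}$. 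Since $dL_{x}$ and $dR_{x}$ are linear isomorphisms, one of the mean curvature vectors vanishes if and only if the other does, proving the corollary.

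The only real issue to verify carefully is the symmetry claim in the second step — i.e., that every ingredient in the derivation of Theorem~\ref{Theorem3.2} (the root space decomposition, Lemma~\ref{onb}, and the normalization of bases) is unchanged when $K_{1}$ and $K_{2}$ are exchanged, so that the formula transports verbatim to the orbit in $N_{2}$. This is essentially a bookkeeping check against the definitions of $\mathfrak{k}_{\lambda},\mathfrak{m}_{\lambda},V^{\perp}_{\alpha}$ and the multiplicity identities, and presents no real obstacle; the rest of the argument is purely algebraic comparison of two identical expressions.
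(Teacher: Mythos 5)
Your proposal is correct and matches the paper's own (very brief) argument: the paper likewise derives the corollary by observing that Theorem~\ref{Theorem3.2} applies verbatim to the orbit $K_{1}\cdot\pi_{2}(x)$ in $N_{2}$, so both mean curvature vectors are given by the same element of $\mathfrak{a}$. Your additional bookkeeping on the $K_{1}\leftrightarrow K_{2}$ symmetry of the triad data (and the equality $\dim V^{\perp}_{\alpha}(\mathfrak{k}_{1}\cap\mathfrak{m}_{2})=\dim V^{\perp}_{\alpha}(\mathfrak{m}_{1}\cap\mathfrak{k}_{2})$) is exactly the check the paper leaves implicit.
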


Next we consider the second fundamental forms of orbits of the $(K_{2}\times K_{1})$-action on $G$. 
For $x=\exp H \ (H \in \mathfrak{a})$,
the tangent space $T_{x}((K_{2}\times K_{1})\cdot x)$ and the normal space $T^{\perp}_{x}((K_{2}\times K_{1})\cdot x)$
of the orbit $(K_{2}\times K_{1})\cdot x$ at $x$ are given as follows.
\begin{align}
\lefteqn{T_{x}((K_{2}\times K_{1})\cdot x)} \hspace{10mm} \nonumber \\
&= \left\{ \left. \left. \frac{d}{dt} \exp(tX_{2}) x \exp(-tX_{1}) \right|_{t=0} \ \right| \
X_{1}\in \mathfrak{k}_{1},\ X_{2}\in \mathfrak{k}_{2}\right\} \nonumber \\
&= dL_{x}((\mathrm{Ad}(x)^{-1} \mathfrak{k}_{2}) +\mathfrak{k}_{1}) \nonumber \\
&= dL_{x} \left( 
\mathfrak{k}_{0}
\oplus \sum_{\lambda \in \Sigma^{+}} \mathfrak{k}_{\lambda} 
\oplus \sum_{\lambda \in \Sigma^{+}\setminus \Sigma_{H}} \mathfrak{m}_{\lambda} 
\oplus V(\mathfrak{k}_{1}\cap \mathfrak{m}_{2}) \right. \label{tangent} \\
&\left. \hspace{10mm} \oplus \sum_{\alpha \in W^{+}} V_{\alpha}^{\perp}(\mathfrak{k}_{1}\cap \mathfrak{m}_{2})
\oplus V(\mathfrak{m}_{1}\cap \mathfrak{k}_{2}) 
\oplus \sum_{\alpha \in W^{+} \setminus W_{H}} V_{\alpha}^{\perp}(\mathfrak{m}_{1}\cap \mathfrak{k}_{2})
\right), \nonumber \\
\lefteqn{T^{\perp}_{x}((K_{2}\times K_{1})\cdot x)} \hspace{10mm} \nonumber \\
&= dL_{x}((\mathrm{Ad}(x)^{-1}\mathfrak{m}_{2}) \cap \mathfrak{m}_{1}) \nonumber \\
&=dL_{x} \left( 
\mathfrak{a} \oplus 
\sum_{\lambda \in \Sigma_{H}^{+}} \mathfrak{m}_{\lambda} \oplus 
\sum_{\alpha  \in W_{H}^{+}} V_{\alpha}^{\perp}(\mathfrak{m}_{1}\cap \mathfrak{k}_{2}) \right).
\end{align}

For $X=(X_{2}, X_{1}) \in \mathfrak{g}\times \mathfrak{g}$, 
we define a Killing vector field $X^{\ast}$ on $G$ by
$$
(X^{\ast})_{y}=\left. \frac{d}{dt} \exp (tX_{2})y\exp(-tX_{1})\right|_{t=0} \quad (y \in G).
$$
Then  
$$
(X^{\ast})_{y}=(dL_{y})(\mathrm{Ad}(y)^{-1}X_{2}-X_{1})
$$
holds. 
If $X_{2}=0$, then $X^{\ast}$ is a left invariant vector field. 
Denote by $\nabla$ the Levi-Civita connection on $G$. 
By Koszul's formula, we have the following. 
\begin{lemma}[\cite{Ohno} Lemma 3] \label{Lemma:connection of G}
Let $y \in G, \  X=(X_{2},X_{1}), \ Y=(Y_{2},Y_{1}) \in \mathfrak{g}\times \mathfrak{g}$. 
Then we have
\begin{align*}
\left( \nabla_{X^{\ast}} Y^{\ast}\right)_{y} =-\frac{1}{2}dL_{y} [\mathrm{Ad}(y)^{-1}X_{2}-X_{1}, \mathrm{Ad}(y)^{-1}Y_{2}+Y_{1}].
\end{align*}
\end{lemma}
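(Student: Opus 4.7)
The plan is to decompose $X^*$ into right- and left-invariant pieces and apply Koszul's formula to each piece separately. Since $(X^*)_y = dL_y\bigl(\mathrm{Ad}(y)^{-1}X_2 - X_1\bigr)$ and $dL_y\circ \mathrm{Ad}(y)^{-1} = dR_y$ on $\mathfrak{g}$, there is a global identity of vector fields $X^* = X_2^R - X_1^L$, where $A^R$ and $A^L$ denote the right- and left-invariant extensions of $A\in\mathfrak{g}$. Writing $Y^* = Y_2^R - Y_1^L$ analogously and expanding $\nabla_{X^*} Y^*$ bilinearly produces four summands, of two types: \emph{pure} (same-sided) and \emph{mixed} (one of each side).

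For the two pure summands, bi-invariance of the metric makes the functions $\langle A^L, B^L\rangle$ and $\langle A^R, B^R\rangle$ constant on $G$, so Koszul's formula collapses to a single bracket term and yields
\[
\nabla_{X_1^L} Y_1^L\big|_y = \tfrac12 dL_y[X_1,Y_1], \qquad \nabla_{X_2^R} Y_2^R\big|_y = -\tfrac12 dL_y\bigl[\mathrm{Ad}(y)^{-1}X_2,\ \mathrm{Ad}(y)^{-1}Y_2\bigr],
\]
the sign in the second formula coming from $[A^R, B^R] = -[A,B]^R$. For the two mixed summands, $[A^L, B^R] = 0$, but the pairing $\langle A^L, B^R\rangle_z = \langle A, \mathrm{Ad}(z^{-1})B\rangle$ is \emph{not} constant, so genuine derivative terms survive in Koszul's formula. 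Carrying them out, testing against a right-invariant vector field, and collapsing everything via the $\mathrm{ad}$-skewness of $\langle\cdot,\cdot\rangle$, one obtains
\[
\nabla_{X_2^R} Y_1^L\big|_y = \tfrac12 dL_y\bigl[\mathrm{Ad}(y)^{-1}X_2,\ Y_1\bigr], \qquad \nabla_{X_1^L} Y_2^R\big|_y = \tfrac12 dL_y\bigl[\mathrm{Ad}(y)^{-1}Y_2,\ X_1\bigr].
\]

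Combining the four terms with the correct signs from $X^* = X_2^R - X_1^L$ and $Y^* = Y_2^R - Y_1^L$, every contribution fits into the single expression $-\tfrac12 dL_y\bigl[\mathrm{Ad}(y)^{-1}X_2 - X_1,\ \mathrm{Ad}(y)^{-1}Y_2 + Y_1\bigr]$; in particular the sign asymmetry between the ``$-X_1$'' in the first slot and the ``$+Y_1$'' in the second slot is exactly what the mixed contributions produce. The main obstacle lies in the mixed-term computation, where several terms of Koszul's formula survive non-trivially and must be tracked and combined carefully using $\mathrm{ad}$-skewness; once those formulas are in hand, everything else is mechanical bilinear assembly.
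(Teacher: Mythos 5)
Your proposal is correct, and it follows essentially the same route the paper indicates: the paper simply invokes Koszul's formula (citing Ohno's Lemma~3), and your decomposition $X^\ast = X_2^R - X_1^L$ with the bi-invariance computations for the pure terms and the Koszul computation for the mixed terms is the standard way to carry that out; all four intermediate formulas and the final assembly check out.
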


By Lemma \ref{Lemma:connection of G}, 
the first author proved the following theorems.
\begin{theorem}[\cite{Ohno} Theorem 3]\label{Lemma:2nd fand. form of of orbits of proper action}
For $x = \exp H \ (H \in \mathfrak{a})$, 
we denote the second fundamental form of the orbit $(K_{2}\times K_{1})\cdot x$ in $G$ by $B_{H}$.
We set
\begin{align*}
V_{1}&=\sum_{\lambda \in \Sigma^{+} \setminus \Sigma_{H} } \mathfrak{m}_{\lambda} 
\oplus  \sum_{\alpha  \in W^{+} \setminus W_{H}} V_{\alpha}^{\perp}(\mathfrak{m}_{1}\cap \mathfrak{k}_{2}), \\ 
V_{2}&=\sum_{\lambda \in \Sigma^{+} } \mathfrak{k}_{\lambda} 
  \oplus \sum_{\alpha  \in W^{+} } V_{\alpha}^{\perp}(\mathfrak{k}_{1}\cap \mathfrak{m}_{2}).
\end{align*} 
Then we have
\begin{enumerate}
%%%%%%%%%%%1
\item For $X \in \mathfrak{k}_{0}$, \label{2nd fand. form 1} $B_{H}(dL_{x}(X),Y)=0 \quad $ where $Y \in T_{x}((K_{2}\times K_{1})\cdot x).$
%%%%%%%%%%2
\item For $X \in V(\mathfrak{k}_{1}\cap \mathfrak{m}_{2})$, \label{2nd fand. form 2}
\begin{align*}
&dL_{x}^{-1}B_{H}(dL_{x}(X),dL_{x}(Y))
=
\begin{cases}
0 & (Y \in \mathfrak{k}_{1} \oplus V(\mathfrak{m}_{1}\cap \mathfrak{k}_{2}))\\
\displaystyle -\frac{1}{2}[X,Y]^{\perp}
& \left( Y \in V_{1} \right).
\end{cases}
\end{align*}
%%%%%%%&&&&&&&3
\item For $X \in V(\mathfrak{m}_{1}\cap \mathfrak{k}_{2})$, \label{2nd fand. form 3}
\begin{align*}
&dL_{x}^{-1}B_{H}(dL_{x}(X),dL_{x}(Y))
=
\begin{cases}
0 
&\left(  Y\in V(\mathfrak{m}_{1}\cap \mathfrak{k}_{2}) \oplus V_{1} \right) \\
\displaystyle \frac{1}{2}[X,Y]^{\perp} 
&  \left( Y \in V_{2} \right).
\end{cases}
\end{align*}
%%%%%%%%%%%%%%4
\item For $S_{\lambda ,i}\ (\lambda \in \Sigma^{+},\ 1\leq i \leq m(\lambda ))$, \label{2nd fand. form 4}
\begin{align*}
&dL_{x}^{-1}B_{H}(dL_{x}(S_{\lambda ,i}),dL_{x}(Y))
=
\begin{cases}
0 
&\left( Y \in V_{2} \right) \\
\displaystyle -\frac{1}{2}[S_{\lambda ,i},Y]^{\perp} 
&\left(Y\in V_{1} \right).
\end{cases}
\end{align*}
%%%%%%%%%%5
\item For $X_{\alpha ,i}\ (\alpha \in W^{+},\ 1\leq i \leq n(\alpha ))$, \label{2nd fand. form 5}
\begin{align*}
&dL_{x}^{-1}B_{H}(dL_{x}(X_{\alpha ,i}),dL_{x}(Y))
=
\begin{cases}
0 
& \left( Y \in V_{2} \right)\\
\displaystyle -\frac{1}{2}[X_{\alpha ,i} ,Y]^{\perp} 
& \left(Y\in V_{1}\right).
\end{cases}
\end{align*}
%%%%%%%%%%6
\item For $T_{\lambda ,i}\ (\lambda \in \Sigma^{+}\setminus \Sigma_{H},\ 1\leq i \leq m(\lambda ))$, \label{2nd fand. form 6}
	\begin{itemize}
	\item $dL_{x}^{-1}B_{H}(dL_{x}(T_{\lambda ,i}), dL_{x}(T_{\mu , j}))=\cot \langle \mu , H \rangle [T_{\lambda ,i}, S_{\mu , j}]^{\perp}$  \\ 
where $\mu \in \Sigma^{+}\setminus \Sigma_{H},\ 1\leq j \leq m(\mu).$
	\item $dL_{x}^{-1}B_{H}(dL_{x}(T_{\lambda ,i}), dL_{x}(Y_{\beta , j}))=-\tan \langle \beta , H \rangle [T_{\lambda ,i}, X_{\beta , j}]^{\perp}$ \\
where $\beta \in W^{+}\setminus W_{H},\ 1\leq j \leq n(\beta).$
	\end{itemize}
%%%%%%%%%%7
\item For $Y_{\alpha ,i}\ (\alpha \in W^{+}\setminus W_{H},\ 1\leq i \leq n(\alpha ))$, \label{2nd fand. form 7}
$$dL_{x}^{-1}B_{H}(dL_{x}(Y_{\alpha ,i}), dL_{x}(Y_{\beta , j}))=-\tan \langle \beta , H \rangle [Y_{\alpha ,i}, X_{\beta , j}]^{\perp}$$
where $\beta \in W^{+}\setminus W_{H},\ 1\leq j \leq n(\beta).$
\end{enumerate}
Here, 
$X^{\perp}$ is the normal component, i.e. the $((\mathrm{Ad}(x)^{-1}\mathfrak{m}_{2}) \cap \mathfrak{m}_{1})$-component, of a tangent vector $X \in \mathfrak{g}$.
\end{theorem}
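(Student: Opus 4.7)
The plan is as follows. Fix $x=\exp H$ with $H\in \mathfrak{a}$, and observe that for $V,W\in T_{x}((K_{2}\times K_{1})\cdot x)$ one has
\[
B_{H}(V,W)=\bigl(\nabla_{V^{*}}W^{*}\bigr)_{x}^{\perp}
\]
whenever $V^{*},W^{*}$ are Killing vector fields on $G$ generated by pairs $(V_{2},V_{1}),(W_{2},W_{1})\in \mathfrak{k}_{2}\times \mathfrak{k}_{1}$ with $(V^{*})_{x}=V$ and $(W^{*})_{x}=W$; here $(\cdot)^{\perp}$ denotes the projection of an element of $\mathfrak{g}$ onto $(\mathrm{Ad}(x)^{-1}\mathfrak{m}_{2})\cap \mathfrak{m}_{1}$. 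Lemma \ref{Lemma:connection of G} supplies the explicit formula
\[
(\nabla_{V^{*}}W^{*})_{x}=-\tfrac{1}{2}dL_{x}\bigl[\mathrm{Ad}(x)^{-1}V_{2}-V_{1},\ \mathrm{Ad}(x)^{-1}W_{2}+W_{1}\bigr],
\]
so the whole proof reduces to (i) lifting each basis vector of the tangent decomposition (\ref{tangent}) to a Killing field, (ii) expanding the resulting bracket, and (iii) taking the normal component.

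For step (i): if $X\in \mathfrak{k}_{1}$ (covering $\mathfrak{k}_{0},\mathfrak{k}_{\lambda},V(\mathfrak{k}_{1}\cap \mathfrak{m}_{2}),V^{\perp}_{\alpha}(\mathfrak{k}_{1}\cap \mathfrak{m}_{2})$), take $(V_{2},V_{1})=(0,-X)$. If $X\in \mathfrak{k}_{2}$ satisfies $[\mathfrak{a},X]=0$ (as for $V(\mathfrak{m}_{1}\cap \mathfrak{k}_{2})$, $\mathfrak{k}_{0}$, $\mathfrak{k}_\lambda$), take $(X,0)$, since $\mathrm{Ad}(x)^{-1}X=X$. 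For $T_{\lambda,i}\in \mathfrak{m}_{\lambda}$ with $\lambda\in \Sigma^{+}\setminus \Sigma_{H}$, the identity $\mathrm{Ad}(x)^{-1}S_{\lambda,i}=\cos\langle \lambda,H\rangle S_{\lambda,i}-\sin\langle \lambda,H\rangle T_{\lambda,i}$ from Lemma \ref{onb}(1) shows that
\[
(V_{2},V_{1})=\left(-\frac{1}{\sin\langle \lambda,H\rangle}S_{\lambda,i},\ -\frac{\cos\langle \lambda,H\rangle}{\sin\langle \lambda,H\rangle}S_{\lambda,i}\right)
\]
produces $(V^{*})_{x}=dL_{x}(T_{\lambda,i})$; an analogous combination of $X_{\alpha,j}$ and $Y_{\alpha,j}$ from Lemma \ref{onb}(2) lifts $Y_{\alpha,j}$ for $\alpha\in W^{+}\setminus W_{H}$.

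For (ii) and (iii), each of (1)--(7) is a direct bracket expansion using $[H,S_{\lambda,i}]=\langle \lambda,H\rangle T_{\lambda,i}$, $[S_{\lambda,i},T_{\lambda,i}]=\lambda$, their $W$-analogues, and $[\mathfrak{a},\cdot]=0$ on the three centralizer summands. In case (1), $X\in \mathfrak{k}_{0}$ commutes with $\mathfrak{a}$ and hence preserves every $\mathrm{Ad}(\mathfrak{a})$-eigenspace, so $[X,\cdot]$ keeps the second argument in the tangential subspaces of (\ref{tangent}) or inside $\mathfrak{k}$, giving zero under $(\cdot)^{\perp}$. In (2)--(5), after the lift one side of the bracket is in $\mathfrak{k}_{1}$, so the mixed terms from the other side land in $\mathfrak{k}\subset \mathfrak{g}$ and drop out, leaving the asserted $\pm \tfrac{1}{2}[X,Y]^{\perp}$. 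In (6)--(7), the $\csc\langle \lambda,H\rangle$ and $\sec\langle \alpha,H\rangle$ factors introduced by the lifts combine with the $\cos\langle \mu,H\rangle$ and $\sin\langle \beta,H\rangle$ coefficients produced by $\mathrm{Ad}(x)^{-1}$ on the second argument; the $[T_{\cdot},T_{\cdot}]$ and $[Y_{\cdot},Y_{\cdot}]$ cross-terms lie in $\mathfrak{k}$ and vanish under $(\cdot)^{\perp}$, leaving exactly $\cot\langle \mu,H\rangle [T_{\lambda,i},S_{\mu,j}]^{\perp}$ and $-\tan\langle \beta,H\rangle [Y_{\alpha,i},X_{\beta,j}]^{\perp}$.

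The main obstacle is combinatorial bookkeeping: (\ref{tangent}) has seven types of summands, so the statement splits into many sub-cases. The symmetry $B_{H}(V,W)=B_{H}(W,V)$ and multi-linearity cut the independent work significantly, but one must still verify in (6)--(7) that the apparently singular $\csc\langle \lambda,H\rangle$ and $\sec\langle \alpha,H\rangle$ coming from the Killing-field lifts cancel exactly against the cosines and sines produced by $\mathrm{Ad}(x)^{-1}$ on the second argument. Once this cancellation is confirmed for a single representative pair, the same calculation applies verbatim to all admissible indices.
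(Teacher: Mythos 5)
Your proposal is correct and follows exactly the route the paper intends: the paper gives no proof beyond citing \cite{Ohno} and pointing to Lemma~\ref{Lemma:connection of G}, and your plan (Gauss formula $B_{H}(V,W)=(\nabla_{V^{*}}W^{*})_{x}^{\perp}$, explicit Killing-field lifts of each summand of (\ref{tangent}), bracket expansion, normal projection) is that computation, with the lifts you write down verified to be the right ones. The only place your justification is looser than it should be is case (3) with $Y\in V_{1}$: there $X\in V(\mathfrak{m}_{1}\cap\mathfrak{k}_{2})$ is not in $\mathfrak{k}_{1}$, and the term $\cot\langle\mu,H\rangle\,[X,S_{\mu,j}]^{\perp}$ vanishes not because the bracket lies in $\mathfrak{k}_{1}$ but because $[X,S_{\mu,j}]\in V^{\perp}_{\mu}(\mathfrak{m}_{1}\cap\mathfrak{k}_{2})$ is tangent when $\mu\notin W_{H}$, while $\cot\langle\mu,H\rangle=0$ when $\mu\in W_{H}$ — a small extra case distinction that your sketch should make explicit.
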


By Theorems~\ref{Theorem3.2} and \ref{Lemma:2nd fand. form of of orbits of proper action}, 
we obtain the following corollary.

\begin{corollary}[\cite{Ohno} Corollary 2] \label{cor:2}
For $x = \exp H \ (H \in \mathfrak{a})$,
we denote the mean curvature vector field of the orbit $(K_{2}\times K_{1})\cdot x$ in $G$ by $\tau_{H}$.
Then, 
$$
dL_{x}^{-1}(\tau_{H})_{x}= -\sum_{\lambda \in \Sigma^{+} \setminus \Sigma_{H}} m(\lambda )\cot \langle \lambda , H\rangle \lambda 
+\sum_{\alpha \in W^{+} \setminus W_{H}} n(\alpha )\tan \langle \alpha , H\rangle \alpha . 
$$
Moreover, $dL_{x}^{-1}(\tau_{H})_{x}=dL_{x}^{-1}(\tau^{1}_{H})_{\pi_{1}(x)}$ holds. 
Hence, the orbit $(K_{2}\times K_{1})\cdot x$ in $G$ is minimal if and only if $K_{2}\cdot \pi_{1}(x)$ in $N_{1}$ is minimal. 
\end{corollary}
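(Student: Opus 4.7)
The plan is to compute $\tau_H$ as the trace of the second fundamental form $B_H$ over an orthonormal basis of $T_x((K_2\times K_1)\cdot x)$, read off from the explicit tangent-space decomposition displayed just before Theorem~\ref{Lemma:2nd fand. form of of orbits of proper action}. I would first fix such a basis by gathering $dL_x$-images of an orthonormal basis of $\mathfrak{k}_0\oplus V(\mathfrak{k}_1\cap\mathfrak{m}_2)\oplus V(\mathfrak{m}_1\cap\mathfrak{k}_2)$ together with the vectors $\{S_{\lambda,i}\}$ for $\lambda\in\Sigma^{+}$, $\{X_{\alpha,j}\}$ for $\alpha\in W^{+}$, $\{T_{\lambda,i}\}$ for $\lambda\in\Sigma^{+}\setminus\Sigma_H$, and $\{Y_{\alpha,j}\}$ for $\alpha\in W^{+}\setminus W_H$, supplied by Lemma~\ref{onb}. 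Orthonormality is preserved under $dL_x$ because the metric on $G$ is bi-invariant.

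Next, I would show that only the last two families contribute to the trace. For a vector $X$ in the $dL_x$-image of $\mathfrak{k}_0$, Theorem~\ref{Lemma:2nd fand. form of of orbits of proper action}(1) gives $B_H(X,X)=0$ directly. If $X=dL_x(Z)$ with $Z\in V(\mathfrak{k}_1\cap\mathfrak{m}_2)$, then $Z\in\mathfrak{k}_1$, so case (2) with $Y=X$ falls in the zero branch; similarly case (3) handles $V(\mathfrak{m}_1\cap\mathfrak{k}_2)$. For $X=dL_x(S_{\lambda,i})$ or $dL_x(X_{\alpha,i})$ we have $Y=X\in V_2$, and cases (4) and (5) again give $0$. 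Hence the only nonzero diagonal contributions are:
\begin{align*}
dL_x^{-1}B_H(dL_x T_{\lambda,i},dL_x T_{\lambda,i}) &= \cot\langle\lambda,H\rangle\,[T_{\lambda,i},S_{\lambda,i}]^{\perp}, \\
dL_x^{-1}B_H(dL_x Y_{\alpha,i},dL_x Y_{\alpha,i}) &= -\tan\langle\alpha,H\rangle\,[Y_{\alpha,i},X_{\alpha,i}]^{\perp}.
\end{align*}
Using $[S_{\lambda,i},T_{\lambda,i}]=\lambda$ and $[X_{\alpha,i},Y_{\alpha,i}]=\alpha$ from Lemma~\ref{onb}, together with the fact that $\lambda,\alpha\in\mathfrak{a}\subset T_x^{\perp}((K_2\times K_1)\cdot x)$ so that $\lambda^{\perp}=\lambda$ and $\alpha^{\perp}=\alpha$, these simplify to $-\cot\langle\lambda,H\rangle\,\lambda$ and $\tan\langle\alpha,H\rangle\,\alpha$ respectively.

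Summing over $i=1,\dots,m(\lambda)$ and over $\lambda\in\Sigma^{+}\setminus\Sigma_H$, and over $i=1,\dots,n(\alpha)$ and $\alpha\in W^{+}\setminus W_H$, yields
$$
dL_x^{-1}(\tau_H)_x=-\sum_{\lambda\in\Sigma^{+}\setminus\Sigma_H}m(\lambda)\cot\langle\lambda,H\rangle\,\lambda+\sum_{\alpha\in W^{+}\setminus W_H}n(\alpha)\tan\langle\alpha,H\rangle\,\alpha,
$$
which is the claimed formula. The identity $dL_x^{-1}(\tau_H)_x=dL_x^{-1}(\tau^1_H)_{\pi_1(x)}$ is then immediate by comparing with Theorem~\ref{Theorem3.2}, and the equivalence of minimality of the two orbits follows because the two mean curvature vectors vanish simultaneously.

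There is no real obstacle: the argument is a bookkeeping trace over the seven cases of Theorem~\ref{Lemma:2nd fand. form of of orbits of proper action}. The only point requiring attention is verifying for each basis family that the self-coupling falls in a subspace where the relevant case returns $0$, which is precisely what the decomposition of $T_x((K_2\times K_1)\cdot x)$ together with the definitions of $V_1,V_2$ makes transparent.
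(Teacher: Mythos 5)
Your proposal is correct and follows essentially the same route the paper intends: it derives the formula by tracing the second fundamental form of Theorem~\ref{Lemma:2nd fand. form of of orbits of proper action} over the orthonormal basis supplied by Lemma~\ref{onb} and the tangent-space decomposition (\ref{tangent}), with only the $T_{\lambda,i}$ and $Y_{\alpha,j}$ families contributing via $[S_{\lambda,i},T_{\lambda,i}]=\lambda$ and $[X_{\alpha,j},Y_{\alpha,j}]=\alpha$, and then compares with Theorem~\ref{Theorem3.2}. The case-by-case verification that the remaining diagonal terms vanish is handled correctly.
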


%%%%%%%%%%%%%%%%%%%%%%%%%%%%%%%%%%%%%%%
We show the following properties of the mean curvature vector field $\tau_H$ of $(K_{2}\times K_{1})\cdot x$ in $G$
and $\tau_H^1$ of $K_{2} \cdot \pi_1(x)$ in $N_1$.
\begin{proposition}\label{prop2.14new}
For $H \in \mathfrak{a}$ and $\sigma =([s], Y) \in \tilde{J}$, 
we set $H'=\sigma\cdot H \in \mathfrak{a}$, $x=\exp(H)$ and $x'=\exp(H')$.
Then $(K_{2}\times K_{1})\cdot x=(K_{2}\times K_{1})\cdot x'$
and 
$$
dL_{x'}^{-1}(\tau_{H})_{x'}=[s]\cdot dL_{x}^{-1}(\tau_{H})_{x}.
$$
\end{proposition}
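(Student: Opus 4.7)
\smallskip

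\noindent\textbf{Proof proposal.} The plan is to first exhibit an explicit element of $K_{2}\times K_{1}$ carrying $x$ to $x'$, which immediately gives the orbit equality, and then to transfer the mean curvature vector from $x$ to $x'$ using the fact that $\tau_{H}$ is preserved by any isometry of the ambient space which sends the orbit to itself.

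For the first step, since $\mathrm{Ad}(s)H$ and $Y$ both lie in the abelian subspace $\mathfrak{a}$, they commute, so
\[
x' \,=\, \exp(\mathrm{Ad}(s)H + Y) \,=\, \exp(\mathrm{Ad}(s)H)\exp(Y) \,=\, s\,x\,s^{-1}\exp(Y).
\]
Setting $k_{2}:=s$ and $k_{1}:=\exp(-Y)s$, we have $k_{2}\in K_{2}$ because $s\in N_{K_{2}}(\mathfrak{a})\subset K_{2}$, and $k_{1}\in K_{1}$ directly from the defining relation of $\tilde{J}$. A short calculation gives $x' = k_{2} x k_{1}^{-1}$, so $x'$ lies in the $(K_{2}\times K_{1})$-orbit of $x$.

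Next I would exploit the isometry $\phi(y) = k_{2}yk_{1}^{-1}$ of $G$, which preserves the orbit and hence preserves $\tau_{H}$, giving $(\tau_{H})_{x'} = d\phi_{x}(\tau_{H})_{x}$. The key routine computation is the simplification of $L_{x'}^{-1}\circ\phi\circ L_{x}$: substituting $x' = k_{2}xk_{1}^{-1}$ shows that this map equals the inner automorphism $y\mapsto k_{1}yk_{1}^{-1}$, whose differential at $e$ is $\mathrm{Ad}(k_{1})$. Consequently
\[
dL_{x'}^{-1}(\tau_{H})_{x'} \,=\, \mathrm{Ad}(k_{1})\,dL_{x}^{-1}(\tau_{H})_{x}.
\]

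To finish, Corollary~\ref{cor:2} gives $dL_{x}^{-1}(\tau_{H})_{x}\in\mathfrak{a}$. Since $\mathfrak{a}$ is abelian, $\mathrm{Ad}(\exp(-Y))$ acts as the identity on $\mathfrak{a}$, and therefore $\mathrm{Ad}(k_{1})|_{\mathfrak{a}} = \mathrm{Ad}(\exp(-Y))\mathrm{Ad}(s)|_{\mathfrak{a}} = \mathrm{Ad}(s)|_{\mathfrak{a}}$, which by the very definition of the $\tilde{J}$-action is how $[s]$ acts on $\mathfrak{a}$. The only slightly delicate point is the reduction of $L_{x'}^{-1}\circ\phi\circ L_{x}$ to an inner automorphism, where the precise form $k_{1}=\exp(-Y)s$ forced by the $\tilde{J}$-condition is exactly what is needed to identify the ambient transformation with the linear $[s]$-action on the flat section.
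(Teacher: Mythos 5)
Your proof is correct and follows essentially the same route as the paper: you exhibit the same group element $(s,\exp(-Y)s)\in K_2\times K_1$ carrying $x$ to $x'$, invoke the same isometry $L_s\circ R_{s^{-1}\exp(Y)}$ to transport the mean curvature vector, and use Corollary~\ref{cor:2} (that $dL_x^{-1}(\tau_H)_x\in\mathfrak{a}$) to reduce the induced map to $[s]=\mathrm{Ad}(s)|_{\mathfrak{a}}$. The only cosmetic difference is that you package the transport as the conjugation identity $L_{x'}^{-1}\circ\phi\circ L_x=\mathrm{Int}(k_1)$ and then kill the $\mathrm{Ad}(\exp(-Y))$ factor on $\mathfrak{a}$, whereas the paper carries out the equivalent computation directly on exponential curves.
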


\begin{proof}
By the definition of $\tilde{J}$, 
there exists $s \in N_{K_{2}}(\mathfrak{a})$ such that $\mathrm{Ad}(s)|_{\mathfrak{a}}=[s]$ and 
$(s, \exp(-Y)s) \in K_{2}\times K_{1}$.
Then we have
$$
(s, \exp(-Y)s)\cdot \exp(H)=s\exp(H)s^{-1}\exp(Y) 
=\exp(\mathrm{Ad}(s)H+Y)=\exp(H').
$$
Thus, $(K_{2}\times K_{1})\cdot x=(K_{2}\times K_{1})\cdot x'$.
Since $L_{s}\circ R_{s^{-1}\exp(Y)}$ is an isometry, we have 
\begin{align*}
(\tau_{H})_{x'}
&=(\tau_{H})_{L_{s}\circ R_{s^{-1}\exp(Y)}(x)}\\
&=dL_{s}\circ dR_{s^{-1}\exp(Y)} ((\tau_{H})_{x})\\
&=\left.\frac{d}{dt} s\exp(H) \exp(t dL_{x}^{-1}(\tau_{H})_{x})s^{-1}\exp(Y) \right|_{t=0}\\
&=\left.\frac{d}{dt} \exp\left( \mathrm{Ad}(s)(t dL_{x}^{-1}(\tau_{H})_{x}+H)\right) \exp(Y) \right|_{t=0}\\
&=\left.\frac{d}{dt} \exp\left( \mathrm{Ad}(s)H+Y\right) \exp \big( t\mathrm{Ad}(s)( dL_{x}^{-1}(\tau_{H})_{x}) \big) \right|_{t=0}\\
&=dL_{x'}(\mathrm{Ad}(s) dL_{x}^{-1}(\tau_{H})_{x})\\
&=dL_{x'}([s] \cdot dL_{x}^{-1}(\tau_{H})_{x}).
\end{align*}
\end{proof}

By Lemmas~4.4 and 4.21 in \cite{I1},
we have that
$\tilde{W}(\tilde{\Sigma}, \Sigma, W)$ is a subgroup of $\tilde{J}$.
Then we have the following Lemma.
\begin{lemma}\label{lemma:perpnew}
For $x = \exp H \ (H \in \mathfrak{a})$, we have 
$$
\langle \lambda, dL_{x}^{-1}(\tau_{H})_{x}\rangle=0 \quad (\lambda \in \tilde{\Sigma}_{H}).
$$
\end{lemma}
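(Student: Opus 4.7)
The plan is to use Proposition~\ref{prop2.14new} together with the fact that $\tilde{W}(\tilde{\Sigma},\Sigma,W)\subset \tilde{J}$, by choosing for each $\lambda\in \tilde{\Sigma}_H$ an affine Weyl group element that fixes $H$ and whose linear part is the reflection $s_\lambda$. Then the mean curvature vector, as a vector in $\mathfrak{a}$, will be forced to be invariant under $s_\lambda$, which is exactly the desired orthogonality.

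Concretely, first suppose $\lambda\in\Sigma_H$, so $\langle\lambda,H\rangle=n\pi$ for some $n\in\mathbb{Z}$. I would consider
\[
\sigma=\Bigl(s_\lambda,\ \frac{2n\pi}{\langle\lambda,\lambda\rangle}\lambda\Bigr)\in \tilde{W}(\tilde{\Sigma},\Sigma,W)\subset \tilde{J}.
\]
A direct computation gives $s_\lambda(H)=H-\frac{2n\pi}{\langle\lambda,\lambda\rangle}\lambda$, hence $\sigma\cdot H=H$. Setting $H'=\sigma\cdot H=H$ and $x'=x$ in Proposition~\ref{prop2.14new} yields
\[
dL_x^{-1}(\tau_H)_x=s_\lambda\bigl(dL_x^{-1}(\tau_H)_x\bigr),
\]
which, since $s_\lambda(v)=v-\frac{2\langle\lambda,v\rangle}{\langle\lambda,\lambda\rangle}\lambda$, forces $\langle\lambda,dL_x^{-1}(\tau_H)_x\rangle=0$.

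For $\alpha\in W_H$ the argument is identical but with the generator suited to $W$: writing $\langle\alpha,H\rangle=(n+\tfrac12)\pi$, I would take
\[
\sigma=\Bigl(s_\alpha,\ \frac{(2n+1)\pi}{\langle\alpha,\alpha\rangle}\alpha\Bigr)\in \tilde{W}(\tilde{\Sigma},\Sigma,W)\subset \tilde{J},
\]
check that $\sigma\cdot H=H$, and conclude $\langle\alpha,dL_x^{-1}(\tau_H)_x\rangle=0$ in the same way.

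There is no real obstacle here: the whole argument reduces to verifying that the affine Weyl reflections fixing the hyperplanes through $H$ genuinely fix $H$ (a one-line calculation) and then invoking Proposition~\ref{prop2.14new}. The only point requiring a moment of care is to make sure one uses the correct representatives in $\tilde{W}(\tilde{\Sigma},\Sigma,W)$—namely, the shift $\frac{2n\pi}{\langle\lambda,\lambda\rangle}\lambda$ for $\lambda\in\Sigma$ and $\frac{(2n+1)\pi}{\langle\alpha,\alpha\rangle}\alpha$ for $\alpha\in W$—so that the resulting affine transformation stabilizes $H$. Since Corollary~\ref{cor:2} already gives $dL_x^{-1}(\tau_H)_x\in\mathfrak{a}$, the conclusion follows immediately from the invariance under $s_\lambda$.
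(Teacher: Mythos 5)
Your proof is correct and follows essentially the same route as the paper: the paper also picks the affine Weyl group element $\bigl(s_{\lambda},\,2\tfrac{\langle\lambda,H\rangle}{\langle\lambda,\lambda\rangle}\lambda\bigr)$ (which specializes to your two generators for $\lambda\in\Sigma_H$ and $\alpha\in W_H$), checks that it fixes $H$, and applies Proposition~\ref{prop2.14new} to deduce $s_{\lambda}$-invariance of $dL_x^{-1}(\tau_H)_x$ and hence the orthogonality.
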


\begin{proof}
When $(\tau_{H})_{x}=0$, it is trivial. 
Thus we assume $(\tau_{H})_{x}\neq 0$.
Since $\lambda \in \tilde{\Sigma}_{H}$, we have
$$
\left( s_{\lambda }, 2\frac{\langle \lambda , H \rangle}{\langle \lambda , \lambda \rangle}\lambda  \right) \in \tilde{W}(\tilde{\Sigma}, \Sigma, W).
$$  
Then, 
\begin{eqnarray*}
\left( s_{\lambda }, 2\frac{\langle \lambda , H \rangle}{\langle \lambda , \lambda \rangle}\lambda  \right) H 
=s_{\lambda }(H)+2\frac{\langle \lambda , H \rangle}{\langle \lambda , \lambda \rangle}\lambda =H.
\end{eqnarray*}
By Proposition \ref{prop2.14new}, we have
$$
dL_{x}^{-1}(\tau_{H})_{x}
=s_{\lambda}(dL_{x}^{-1}(\tau_{H})_{x} )
= dL_{x}^{-1}(\tau_{H})_{x}-2\frac{\langle \lambda, dL_{x}^{-1}(\tau_{H})_{x} \rangle}{\langle \lambda, \lambda \rangle}\lambda.
$$
Therefore, we obtain $\langle \lambda , dL_{x}^{-1}(\tau_{H})_{x}\rangle =0$.
\end{proof}

\begin{proposition}\label{prop:parallelmeancurvatuer}
For $x = \exp H \ (H \in \mathfrak{a})$, we have 
$$
\nabla^{\perp}_{X}\tau_{H}=0 \quad \big(X \in \mathfrak{X}((K_{2}\times K_{1})\cdot x) \big).
$$
\end{proposition}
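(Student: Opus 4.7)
The plan is to show $\nabla^{\perp}_{X}\tau_{H}=0$ at the single reference point $x=\exp H$; equivariance of the mean curvature vector field under the $(K_{2}\times K_{1})$-action, which acts on $G$ by isometries and preserves the orbit, then propagates the vanishing to every other point on $(K_{2}\times K_{1})\cdot x$.

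Every tangent vector at $x$ is of the form $\dot\gamma(0)$ for a curve $\gamma(t)=\exp(tZ_{2})\,x\,\exp(-tZ_{1})$ with $(Z_{2},Z_{1})\in \mathfrak{k}_{2}\times \mathfrak{k}_{1}$. Setting $\tau:=dL_{x}^{-1}(\tau_{H})_{x}\in \mathfrak{a}$ (the explicit vector given by Corollary~\ref{cor:2}), equivariance yields
$$(\tau_{H})_{\gamma(t)}=dL_{\gamma(t)}\bigl(\mathrm{Ad}(\exp(tZ_{1}))\tau\bigr),$$
obtained by conjugating $L_{\exp(tZ_{2})}\circ R_{\exp(-tZ_{1})}$ through $L_{\gamma(t)}$. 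Applying the standard Koszul identity for a vector field of the form $Y(t)=dL_{\gamma(t)}(V(t))$ on the bi-invariant Lie group $G$, namely $\nabla_{\dot\gamma(0)}Y=dL_{x}\bigl(V'(0)+\tfrac{1}{2}[X_{0},V(0)]\bigr)$ with $X_{0}=dL_{x}^{-1}\dot\gamma(0)=\mathrm{Ad}(x)^{-1}Z_{2}-Z_{1}$, the computation produces
$$\nabla_{\dot\gamma(0)}\tau_{H}=\tfrac{1}{2}\,dL_{x}\bigl([\,Z_{1}+\mathrm{Ad}(x)^{-1}Z_{2},\ \tau\,]\bigr).$$

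The remaining task is to show that the bracket on the right lies in the tangent summand (\ref{tangent}) of $\mathfrak{g}$. I would expand $Z_{1}$ along
$\mathfrak{k}_{1}=\mathfrak{k}_{0}\oplus\textstyle\sum_{\lambda\in \Sigma^{+}}\mathfrak{k}_{\lambda}\oplus V(\mathfrak{k}_{1}\cap \mathfrak{m}_{2})\oplus \sum_{\alpha\in W^{+}}V^{\perp}_{\alpha}(\mathfrak{k}_{1}\cap \mathfrak{m}_{2})$,
and similarly decompose $Z_{2}\in \mathfrak{k}_{2}$. By the explicit $\mathrm{Ad}(\exp(\pm H))$-rotations and the commutation relations in Lemma~\ref{onb}, each of $[Z_{1},\tau]$ and $[\mathrm{Ad}(x)^{-1}Z_{2},\tau]$ becomes an explicit linear combination of $\{S_{\lambda,i},T_{\lambda,i},X_{\alpha,j},Y_{\alpha,j}\}$ in which every coefficient is proportional to $\langle \lambda,\tau\rangle$ or $\langle \alpha,\tau\rangle$. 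Contributions from $\mathfrak{k}_{0}$ and from the centralizers $V(\mathfrak{k}_{1}\cap \mathfrak{m}_{2})$, $V(\mathfrak{m}_{1}\cap \mathfrak{k}_{2})$ vanish outright since $[\,\mathfrak{a},\cdot\,]=0$ on those summands.

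The main obstacle is this final bookkeeping, but the essential mechanism is a clean per-root dichotomy. For $\lambda\in \Sigma_{H}$ (respectively $\alpha\in W_{H}$) the inner product $\langle \lambda,\tau\rangle$ (respectively $\langle \alpha,\tau\rangle$) vanishes by Lemma~\ref{lemma:perpnew}, so the corresponding term disappears; for $\lambda\in \Sigma^{+}\setminus \Sigma_{H}$ (respectively $\alpha\in W^{+}\setminus W_{H}$) the resulting basis vectors $S_{\lambda,i},T_{\lambda,i}$ (respectively $X_{\alpha,j},Y_{\alpha,j}$) live in summands which by (\ref{tangent}) are tangential. Consequently the normal projection of $\nabla_{\dot\gamma(0)}\tau_{H}$ vanishes, giving $\nabla^{\perp}_{\dot\gamma(0)}\tau_{H}=0$ at $x$, and $(K_{2}\times K_{1})$-equivariance of both $\tau_{H}$ and $\nabla^{\perp}$ promotes this to all tangent vector fields $X\in \mathfrak{X}((K_{2}\times K_{1})\cdot x)$.
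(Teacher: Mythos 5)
Your proof is correct and follows essentially the same route as the paper's: both reduce to the single point $x$ by homogeneity, derive the identity $\nabla_{X}\tau_{H}=\tfrac{1}{2}\,dL_{x}[\mathrm{Ad}(x)^{-1}Z_{2}+Z_{1},\tau]$ from the bi-invariant connection (you via the covariant derivative of $dL_{\gamma(t)}(\mathrm{Ad}(\exp(tZ_{1}))\tau)$ along a single curve, the paper by splitting into the Killing fields $(X_{2},0)^{\ast}$ and $(0,X_{1})^{\ast}$ and applying Lemma~\ref{Lemma:connection of G}), and then kill the normal component by exactly the same per-root dichotomy using Lemma~\ref{onb}, Lemma~\ref{lemma:perpnew} and the decomposition (\ref{tangent}).
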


\begin{proof}
Since the orbit $(K_{2}\times K_{1})\cdot x$ is a homogeneous submanifold in $G$,
it is sufficient to prove that $\nabla^{\perp}_{X}\tau_{H}=0$ at one point $x$ in $G$.

Let $X\in T_{x}((K_{2}\times K_{1})\cdot x)$. 
Then there exists $(X_{2}, X_{1}) \in \mathfrak{k}_{2}\times \mathfrak{k}_{1}$ such that 
$X=(X_{2}, X_{1})_{x}^{\ast}$.
For $k_{2} \in K_{2}$, we have 
\begin{align*}
(0, -dL_{x}^{-1}(\tau_{H})_{x})^{\ast}_{k_{2}x}
&= \left. \frac{d}{dt} k_{2}x\exp (tdL_{x}^{-1}(\tau_{H})_{x}) \right|_{t=0} \\
&= dL_{k_{2}}dL_{x}
\left. \frac{d}{dt} \exp (tdL_{x}^{-1}(\tau_{H})_{x}) \right|_{t=0} \\
&= dL_{k_{2}}(\tau_{H})_{x} \\
&= (\tau_{H})_{k_{2}x}.
\end{align*}
Since $H$ and $dL_{x}^{-1}(\tau_{H})_{x}$ are in $\mathfrak{a}$ from Corollary~\ref{cor:2},
we have, for $k_{1} \in K_{1}$,
\begin{align*}
(dL_{x}^{-1}(\tau_{H})_{x}, 0)^{\ast}_{xk_{1}^{-1}}
&= \left. \frac{d}{dt} \exp (tdL_{x}^{-1}(\tau_{H})_{x}) x k_{1}^{-1} \right|_{t=0} \\
&= \left. \frac{d}{dt} x \exp (tdL_{x}^{-1}(\tau_{H})_{x}) k_{1}^{-1} \right|_{t=0} \\
&= dR_{k_{1}}^{-1}dL_{x}
\left. \frac{d}{dt} \exp (tdL_{x}^{-1}(\tau_{H})_{x}) \right|_{t=0} \\
&= dR_{k_{1}}^{-1}(\tau_{H})_{x} \\
&= (\tau_{H})_{xk_{1}^{-1}}.
\end{align*}
In particular,
$\tau_{H}=(0, -dL_{x}^{-1}(\tau_{H})_{x})^{\ast}$
on the curve $\exp(tX_{2})x$ for $X_{2}\in \mathfrak{k}_{2}$
and 
$\tau_{H}=(dL_{x}^{-1}(\tau_{H})_{x}, 0)^{\ast}$
on the curve $x\exp(tX_{1})$ for $X_{1}\in \mathfrak{k}_{1}$.

Since $H$ and $dL_{x}^{-1}(\tau_{H})_{x}$ are in $\mathfrak{a}$,
$$
\mathrm{Ad}(x)^{-1} dL_{x}^{-1}(\tau_{H})_{x} = dL_{x}^{-1}(\tau_{H})_{x}.
$$
Hence, by Lemma~\ref{Lemma:connection of G}, we have 
\begin{align*}
\left( \nabla^{\perp}_{X} \tau_{H} \right)_{x}
&= \left( \nabla_{(X_{2}, X_{1})^{\ast}} \tau_{H} \right)^{\perp}_{x} \\
&= \left( \nabla_{(X_{2}, 0)^{\ast}} \tau_{H} \right)^{\perp}_{x}
+ \left( \nabla_{(0, X_{1})^{\ast}} \tau_{H} \right)^{\perp}_{x}\\
&= \left( \nabla_{(X_{2}, 0)^{\ast}} (0, -dL_{x}^{-1}(\tau_{H})_{x})^{\ast} \right)^{\perp}_{x}
+ \left( \nabla_{(0, X_{1})^{\ast}} (dL_{x}^{-1}(\tau_{H})_{x}, 0)^{\ast} \right)^{\perp}_{x}\\
&= \left( -\frac{1}{2}dL_{x}[\mathrm{Ad}(x)^{-1}X_{2}, -dL_{x}^{-1}(\tau_{H})_{x}]\right)^{\perp}
+\left( -\frac{1}{2}dL_{x}[-X_{1}, dL_{x}^{-1}(\tau_{H})_{x}]\right)^{\perp}\\
&= \left( \frac{1}{2}dL_{x}[\mathrm{Ad}(x)^{-1}X_{2}+X_{1}, dL_{x}^{-1}(\tau_{H})_{x}]\right)^{\perp}.
\end{align*}
Therefore, in order to prove $\nabla^{\perp}_{X}\tau_{H}=0$, 
it is sufficient to show that 
$$
[(\mathrm{Ad}(x)^{-1}\mathfrak{k}_{2})+\mathfrak{k}_{1}, dL_{x}^{-1}(\tau_{H})_{x}] \subset \mathrm{(Ad}(x)^{-1}\mathfrak{k}_{2})+\mathfrak{k}_{1}.
$$
From (\ref{tangent}), we have
\begin{align*}
\lefteqn{(\mathrm{Ad}(x)^{-1} \mathfrak{k}_{2}) + \mathfrak{k}_{1}} \hspace{5mm}\\
&=\left( 
\mathfrak{k}_{0}
\oplus \sum_{\lambda \in \Sigma^{+}} \mathfrak{k}_{\lambda}
\oplus \sum_{\lambda \in \Sigma^{+}\setminus \Sigma_{H}} \mathfrak{m}_{\lambda}
\oplus V(\mathfrak{k}_{1}\cap \mathfrak{m}_{2})
\oplus \sum_{\alpha \in W^{+}} V_{\alpha}^{\perp}(\mathfrak{k}_{1}\cap \mathfrak{m}_{2})
\right. \\
& \hspace{10mm}\left.
\oplus V(\mathfrak{m}_{1}\cap \mathfrak{k}_{2}) 
\oplus \sum_{\alpha \in W^{+} \setminus W_{H}} V_{\alpha}^{\perp}(\mathfrak{m}_{1}\cap \mathfrak{k}_{2})  
\right).
\end{align*}
Since $dL_{x}^{-1}(\tau_{H})_{x} \in \mathfrak{a}$ and Lemma~\ref{onb}, we have
\begin{align*}
& [\mathfrak{k}_{0}\oplus V(\mathfrak{m}_{1}\cap \mathfrak{k}_{2})
\oplus V(\mathfrak{k}_{1}\cap \mathfrak{m}_{2}), dL_{x}^{-1}(\tau_{H})_{x}] = \{0\}, \\
& [\mathfrak{k}_{\lambda}\oplus \mathfrak{m}_{\lambda}, dL_{x}^{-1}(\tau_{H})_{x}]
\subset \mathfrak{k}_{\lambda} \oplus \mathfrak{m}_{\lambda}, \\
& [V_{\alpha}^{\perp}(\mathfrak{m}_{1}\cap \mathfrak{k}_{2})
\oplus V_{\alpha}^{\perp}(\mathfrak{k}_{1}\cap \mathfrak{m}_{2}), dL_{x}^{-1}(\tau_{H})_{x}]
\subset V_{\alpha}^{\perp}(\mathfrak{m}_{1}\cap \mathfrak{k}_{2})
\oplus V_{\alpha}^{\perp}(\mathfrak{k}_{1}\cap \mathfrak{m}_{2})
\end{align*}
for $\lambda \in \Sigma^{+} \setminus \Sigma_{H}$ and $\alpha  \in W^{+} \setminus W_{H}$.
By Lemma~\ref{onb} and Lemma~\ref{lemma:perpnew}, we also have
\begin{align*}
[\mathfrak{k}_{\lambda }, dL_{x}(\tau_{H})_{x}]=\{ 0\}, \quad
[V_{\alpha}^{\perp}(\mathfrak{k}_{1}\cap \mathfrak{m}_{2}),  dL_{x}(\tau_{H})_{x}] =\{0\}
\end{align*} 
for $\lambda \in \Sigma_{H}^{+}$ and $\alpha  \in W_{H}^{+}$.
Therefore we have the consequence.
\end{proof}

In Theorem~\ref{Theorem3.2},
we described the tension field of the orbit of commutative Hermann actions of $K_2$ on $N_1$.
From this expression, we can verify the existence of minimal orbits.
In the case of isotropy actions of compact symmetric spaces,
the orbit space can be identified with a cell $P_0$ as in (\ref{eq:cell_of_isotropy_action}).
Hirohashi, Tasaki, Song and Takagi proved that,
according to the stratification of orbit types,
there exists a unique minimal orbit in each orbit type (cf. \cite[Theorem 3.1]{HTST}).
For commutative Hermann actions which satisfy one of (A), (B) and (C) in Theorem~\ref{Theorem:I2 AB},
Ikawa obtained the same result (cf. \cite[Theorem 2.24]{I1}).
Moreover, the first author proved Proposition~\ref{Lemma:2nd fand. form of of orbits of proper action},
i.e. $dL_{x}^{-1}(\tau_{H})_{x}=dL_{x}^{-1}(\tau^{1}_{H})_{\pi_{1}(x)}$,
which implies that there exists a unique minimal orbit in each orbit type
for the $(K_2 \times K_1)$-action of $G$.

\section{Characterizations of biharmonic orbits}
\label{sect:Characterizations of biharmonic orbits}

In the previous section, 
we described the second fundamental forms of orbits of the Hermann action of $K_2$ on $N_1$
and the $(K_{2}\times K_{1})$-action on $G$.
In this section, we give a necessary and sufficient condition 
for an orbit to be a biharmonic submanifold.

\subsection{Characterization of biharmonic orbits of commutative Hermann actions}

First, we consider orbits of commutative Hermann actions.
Since all orbits of Hermann actions satisfy $\nabla^{\perp}_{X}\tau^{1}_{H}=0$ (see \cite{IST1}),
we can apply Theorem~\ref{theorem1}.

\begin{theorem}\label{thm: Bih. orbits of commutative Hermann actions}
Let $(G, K_{1}, K_{2})$ be a commutative compact symmetric triad.
For $x = \exp H \ (H \in \mathfrak{a})$,
the orbit $K_{2}\cdot \pi_{1}(x)$ is biharmonic in $N_1$ if and only if 
\begin{align}\label{bih.eq in thm 4.1}
&\sum_{\lambda \in \Sigma^{+} \setminus \Sigma_{H}} m(\lambda)
\langle dL_{x}^{-1}(\tau^{1}_{H})_{\pi_{1}(x)}, \lambda \rangle
\left( 1 - (\cot \langle \lambda, H\rangle )^{2}\right) \lambda \\
+ &\sum_{\alpha \in W^{+} \setminus W_{H}} n(\alpha)
\langle dL_{x}^{-1}(\tau^{1}_{H})_{\pi_{1}(x)}, \alpha \rangle
\left( 1 - (\tan \langle \alpha, H\rangle )^{2}\right) \alpha = 0 \nonumber
\end{align}
holds.
\end{theorem}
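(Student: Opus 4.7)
The plan is to apply Theorem~\ref{theorem1}. By the result cited from \cite{IST1} in the paragraph preceding the statement, every orbit of a commutative Hermann action has mean curvature vector field parallel in the normal bundle, so Theorem~\ref{theorem1} applies and biharmonicity of $K_{2}\cdot\pi_{1}(x)$ is equivalent to checking (\ref{eq:2.2}) at the single point $\pi_{1}(x)$. I will translate the equation to the origin via the isometry $dL_{x}^{-1}$ and set $\tau_{0}:=dL_{x}^{-1}(\tau_{H}^{1})_{\pi_{1}(x)}$, which lies in $\mathfrak{a}$ by Theorem~\ref{Theorem3.2}. As an orthonormal basis of $T_{\pi_{1}(x)}(K_{2}\cdot\pi_{1}(x))$ I take $\{dL_{x}T_{\lambda,i}\}_{\lambda\in\Sigma^{+}\setminus\Sigma_{H}}$, $\{dL_{x}Y_{\alpha,j}\}_{\alpha\in W^{+}\setminus W_{H}}$ from Lemma~\ref{onb}, together with any orthonormal basis $\{dL_{x}Y_{l}\}$ of $dL_{x}V(\mathfrak{m}_{1}\cap\mathfrak{k}_{2})$.

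For the curvature side of (\ref{eq:2.2}), I invoke the symmetric-space identity $R^{h}(X,Y)Z=-[[X,Y],Z]$ at the origin of $N_{1}$, transported by the isometry $L_{x}$. Because $\tau_{0}\in\mathfrak{a}$, Lemma~\ref{onb} immediately gives $-[[\tau_{0},T_{\lambda,i}],T_{\lambda,i}]=\langle\lambda,\tau_{0}\rangle\lambda$ and $-[[\tau_{0},Y_{\alpha,j}],Y_{\alpha,j}]=\langle\alpha,\tau_{0}\rangle\alpha$, while the contribution from $V(\mathfrak{m}_{1}\cap\mathfrak{k}_{2})$ vanishes by its defining property as the $\mathfrak{a}$-centralizer. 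Summing over the basis produces
\[
dL_{x}^{-1}\sum_{k} R^{h}\!\left(\tau_{H}^{1},dL_{x}e_{k}\right)dL_{x}e_{k} = \sum_{\lambda\in\Sigma^{+}\setminus\Sigma_{H}} m(\lambda)\langle\lambda,\tau_{0}\rangle\lambda + \sum_{\alpha\in W^{+}\setminus W_{H}} n(\alpha)\langle\alpha,\tau_{0}\rangle\alpha.
\]

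For the remaining side, I switch to the equivalent form (\ref{eq:2.3}) and compute the shape operator $A_{\tau}$ directly. Combining Theorem~\ref{2nd} with the $\mathrm{Ad}$-invariance identity $\langle[A,B],C\rangle=\langle A,[B,C]\rangle$ and the brackets of Lemma~\ref{onb}, a calculation shows that $A_{\tau}$ is diagonal in the chosen basis with eigenvalues $-\cot\langle\lambda,H\rangle\langle\lambda,\tau_{0}\rangle$ on $dL_{x}T_{\lambda,i}$, $\tan\langle\alpha,H\rangle\langle\alpha,\tau_{0}\rangle$ on $dL_{x}Y_{\alpha,j}$, and $0$ on $dL_{x}V(\mathfrak{m}_{1}\cap\mathfrak{k}_{2})$. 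Substituting into $\sum_{i}B_{H}^{1}(A_{\tau}e_{i},e_{i})$ together with $[T_{\lambda,i},S_{\lambda,i}]^{\perp}=-\lambda$ and $[Y_{\alpha,j},X_{\alpha,j}]^{\perp}=-\alpha$ yields
\[
dL_{x}^{-1}\sum_{i} B_{H}^{1}(A_{\tau}e_{i},e_{i}) = \sum_{\lambda\in\Sigma^{+}\setminus\Sigma_{H}} m(\lambda)\bigl(\cot\langle\lambda,H\rangle\bigr)^{2}\langle\lambda,\tau_{0}\rangle\lambda + \sum_{\alpha\in W^{+}\setminus W_{H}} n(\alpha)\bigl(\tan\langle\alpha,H\rangle\bigr)^{2}\langle\alpha,\tau_{0}\rangle\alpha.
\]
Equating the two expressions and collecting terms produces exactly (\ref{bih.eq in thm 4.1}). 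The step I expect to be the main obstacle is verifying that $A_{\tau}$ is genuinely diagonal in this basis: the off-diagonal entries $\langle A_{\tau}T_{\lambda,i},Y_{\beta,k}\rangle$ and those involving the $Y_{l}$ must all vanish, which follows from the orthogonality between $\mathfrak{m}_{1}\cap\mathfrak{m}_{2}$ and $\mathfrak{m}_{1}\cap\mathfrak{k}_{2}$ together with items (3)--(5) of Theorem~\ref{2nd}; it is the careful bookkeeping of the normal-component brackets $[\cdot,\cdot]^{\perp}$ appearing in these cross terms that I expect to consume most of the computation.
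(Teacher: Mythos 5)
Your proposal is correct and follows essentially the same route as the paper: invoke Theorem~\ref{theorem1} via the parallel mean curvature from \cite{IST1}, compute the curvature side from $R = -[[\cdot,\cdot],\cdot]$ using that $dL_x^{-1}(\tau^1_H)_{\pi_1(x)}\in\mathfrak{a}$, and compute the shape-operator side from Theorem~\ref{2nd} with the brackets of Lemma~\ref{onb}, the $V(\mathfrak{m}_1\cap\mathfrak{k}_2)$ directions contributing nothing. Your diagonal eigenvalues $-\cot\langle\lambda,H\rangle\langle\lambda,\tau_0\rangle$ and $\tan\langle\alpha,H\rangle\langle\alpha,\tau_0\rangle$ and the resulting two sums match the paper's computation exactly, and your justification for the vanishing of the off-diagonal entries (the $\mathfrak{a}$-component of the cross brackets is zero) is the correct, if tacit in the paper, supporting check.
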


\begin{proof}
The curvature tensor $R^{\langle, \rangle}$ of a Riemannian symmetric space $(N_{1}, \langle, \rangle)$
is given by
$$
R^{\langle, \rangle } (dL_{x}(X), dL_{x}(Y)) dL_{x}(Z) = -dL_{x} [[X,Y],Z] \quad (X, Y, Z \in \mathfrak{m}_{1}).
$$
Since $dL_x^{-1}(\tau^{1}_{H})_{\pi_{1}(x)} \in \mathfrak{a}$, we have
\begin{itemize}
\item for $\lambda \in \Sigma^{+} \setminus \Sigma_{H} , 1\leq i\leq m(\lambda )$,
\begin{align*}
R^{\langle, \rangle} ((\tau^{1}_{H})_{\pi_{1}(x)}, dL_{x}(T_{\lambda, i})) dL_{x}(T_{\lambda, i})
&= -dL_x [[dL_x^{-1}(\tau^{1}_{H})_{\pi_{1}(x)}, T_{\lambda, i}], T_{\lambda, i}] \\
&= \langle dL_{x}^{-1}(\tau^{1}_{H})_{\pi_{1}(x)}, \lambda \rangle dL_{x} [S_{\lambda, i}, T_{\lambda, i}] \\
&= \langle dL_{x}^{-1}(\tau^{1}_{H})_{\pi_{1}(x)}, \lambda \rangle dL_{x}(\lambda), 
\end{align*}
\item for $\alpha \in W^{+} \setminus W_{H}, 1 \leq j \leq m(\alpha)$,
$$
R^{\langle, \rangle} ((\tau^{1}_{H})_{\pi_{1}(x)}, dL_{x} (Y_{\alpha, j})) dL_{x} (Y_{\alpha, j})
= \langle dL_{x}^{-1}(\tau^{1}_{H})_{\pi_{1}(x)}, \alpha \rangle dL_{x}(\alpha),
$$
\item for $X \in V(\mathfrak{m}_{1} \cap \mathfrak{k}_{2})$,
$$
R^{\langle, \rangle} ((\tau^{1}_{H})_{\pi_{1}(x)}, dL_{x} (X)) dL_{x} (X) = 0.
$$
\end{itemize} 
On the other hand, 
by Lemma \ref{2nd}, we have 
\begin{itemize}
\item for $\lambda  \in \Sigma^{+} \setminus \Sigma_{H}, 1 \leq i \leq m(\lambda)$, 
$$
A_{(\tau^{1}_{H})_{\pi_{1}(x)}} dL_{x}(T_{\lambda, i})
= -\langle dL_{x}^{-1} (\tau^{1}_{H})_{\pi_{1}(x)}, \lambda \rangle (\cot \langle \lambda, H\rangle) T_{\lambda, i},
$$
\begin{align*}
\lefteqn{B_{H}^1 (A_{(\tau^{1}_{H})_{\pi_{1}(x)}} dL_{x}(T_{\lambda , i}), dL_{x}(T_{\lambda , i}))} \hspace{10mm} \\ 
&= -\langle dL_{x}^{-1}(\tau^{1}_{H})_{\pi_{1}(x)}, \lambda \rangle (\cot \langle \lambda, H\rangle)
B_{H}^1 (dL_{x}(T_{\lambda, i}), dL_{x}(T_{\lambda, i}))\\
&= \langle dL_{x}^{-1}(\tau^{1}_{H})_{\pi_{1}(x)}, \lambda \rangle (\cot \langle \lambda, H \rangle)^{2} dL_{x}(\lambda),
\end{align*} 
\item for $\alpha  \in W^{+} \setminus W_{H} , 1\leq j\leq n(\alpha )$, 
\begin{align*}
\lefteqn{B_{H}^1 (A_{(\tau^{1}_{H})_{\pi_{1}(x)}} dL_{x}(Y_{\alpha, j}), dL_{x}(Y_{\alpha, j}))} \hspace{10mm} \\  
&= -\langle dL_{x}^{-1}(\tau^{1}_{H})_{\pi_{1}(x)}, \alpha \rangle (\tan \langle \alpha, H \rangle)
B_{H}^1 (dL_{x}(Y_{\alpha, j}), dL_{x}(Y_{\alpha, j}))\\
&= \langle dL_{x}^{-1}(\tau^{1}_{H})_{\pi_{1}(x)}, \alpha \rangle (\tan \langle \alpha, H \rangle)^{2} dL_{x}(\alpha),
\end{align*} 
\item for $X\in V(\mathfrak{m}_{1}\cap \mathfrak{k}_{2})$,
$$
B_{H}^1 (A_{(\tau^{1}_{H})_{\pi_{1}(x)}} dL_{x}(X), dL_{x}(X)) = 0.
$$
\end{itemize}
Therefore, by Theorem~\ref{theorem1}, we have the consequence.
\end{proof}

\begin{corollary}\label{cohom1hermann}
Let $(G, K_{1}, K_{2})$ be a commutative compact symmetric triad
which satisfies $\dim \mathfrak{a} =1$, i.e. $\tilde\Sigma \subset \{ \alpha, 2\alpha \}$.
For $x = \exp H \ (H \in \mathfrak{a})$,
suppose that the orbit $K_{2}\cdot \pi_{1}(x)$ is a regular orbit.
Then $K_{2}\cdot \pi_{1}(x)$ is biharmonic in $N_{1}$ if and only if 
\begin{align*}
\langle dL_{x}^{-1}(\tau^{1}_{H})_{\pi_{1}(x)} , \alpha \rangle 
&\left( m(\alpha ) \left\{ 1 - (\cot \langle \alpha , H \rangle )^{2} \right\}
+4m(2\alpha ) \left\{ 1 - (\cot \langle 2\alpha , H \rangle )^{2} \right\} \right. \\
&\left. +n(\alpha ) \left\{ 1 - (\tan \langle \alpha , H \rangle )^{2} \right\}
+4n(2\alpha ) \left\{ 1 - (\tan \langle 2\alpha , H \rangle )^{2} \right\}
\right)
=0
\end{align*}
holds.
Here, for $\lambda \in \mathfrak{a}$,
if $\lambda \notin \Sigma$ (resp. $\lambda \notin W$),
then $m(\lambda)=0$ (resp. $n(\lambda )=0$). 
\end{corollary}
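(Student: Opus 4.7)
The plan is to specialize Theorem~\ref{thm: Bih. orbits of commutative Hermann actions} to the case $\dim \mathfrak{a} = 1$ and then collect terms. Under this assumption, a choice of positive direction identifies $\tilde{\Sigma}^{+}$ with a subset of $\{\alpha, 2\alpha\}$, so both $\Sigma^{+}$ and $W^{+}$ are contained in $\{\alpha, 2\alpha\}$. Since $K_{2}\cdot \pi_{1}(x)$ is assumed regular, $H \in \mathfrak{a}_{r}$, whence $\Sigma_{H} = W_{H} = \emptyset$ and the index sets $\Sigma^{+}\setminus \Sigma_{H}$ and $W^{+}\setminus W_{H}$ in (\ref{bih.eq in thm 4.1}) simplify to the full positive sets.

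First I would extend the multiplicities to all of $\{\alpha, 2\alpha\}$ by setting $m(\lambda)=0$ if $\lambda \notin \Sigma$ and $n(\lambda)=0$ if $\lambda \notin W$, so that (\ref{bih.eq in thm 4.1}) becomes a sum of at most four explicit terms indexed by $\lambda \in \{\alpha, 2\alpha\}$ (from $\Sigma^{+}$) and $\beta \in \{\alpha, 2\alpha\}$ (from $W^{+}$). The main algebraic input is the identity
$$
\langle dL_{x}^{-1}(\tau^{1}_{H})_{\pi_{1}(x)},\, 2\alpha\rangle\,(2\alpha) = 4\,\langle dL_{x}^{-1}(\tau^{1}_{H})_{\pi_{1}(x)},\, \alpha\rangle\,\alpha,
$$
which is what produces the coefficients $4m(2\alpha)$ and $4n(2\alpha)$ in the statement. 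After this substitution each of the four summands is a scalar multiple of $\alpha$, and the common factor $\langle dL_{x}^{-1}(\tau^{1}_{H})_{\pi_{1}(x)}, \alpha\rangle\,\alpha$ can be pulled out.

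Because $\alpha \neq 0$, the resulting vector equation is equivalent to the vanishing of the bracketed scalar expression, which is precisely the condition stated in the corollary. There is no substantive obstacle here; the work is purely bookkeeping, namely tracking which of $\alpha, 2\alpha$ lie in $\Sigma$ versus $W$ and keeping the factor of $4$ that arises from replacing $2\alpha$ by $\alpha$ in both the inner product and the outer vector, all under the stated zero-multiplicity convention.
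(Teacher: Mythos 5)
Your proposal is correct and is exactly the computation the paper intends: the corollary is an immediate specialization of Theorem~\ref{thm: Bih. orbits of commutative Hermann actions}, using regularity to get $\Sigma_{H}=W_{H}=\emptyset$, the zero-multiplicity convention, and the substitution $\langle \tau, 2\alpha\rangle (2\alpha)=4\langle\tau,\alpha\rangle\alpha$ to produce the factors $4m(2\alpha)$ and $4n(2\alpha)$ before cancelling $\alpha\neq 0$. No gaps.
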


\subsection{Characterization of biharmonic orbits of $(K_2 \times K_1)$-actions}

Next, we consider orbits of the $(K_{2}\times K_{1})$-action on $G$.
By Proposition~\ref{prop:parallelmeancurvatuer}, we can apply Theorem~\ref{theorem1}.

\begin{theorem}\label{Thm:charec:Liegrp}
Let $(G, K_{1}, K_{2})$ be a commutative compact symmetric triad.
For $x = \exp H \ (H \in \mathfrak{a})$,
the orbit $(K_{2}\times K_{1})\cdot x$ is biharmonic in $G$ if and only if 
\begin{align*}
&\sum_{\lambda \in \Sigma^{+} \setminus \Sigma_{H}} m(\lambda ) \langle dL_{x}^{-1}(\tau_{H})_{x}, \lambda \rangle
\left( \frac{3}{2} - (\cot \langle \lambda , H\rangle )^{2}\right) \lambda \\
+&\sum_{\alpha \in W^{+} \setminus W_{H}} n(\alpha) \langle dL_{x}^{-1}(\tau_{H})_{x}, \alpha \rangle
\left( \frac{3}{2} - (\tan \langle \alpha , H\rangle )^{2}\right) \alpha \\ 
+&\sum_{\mu \in  \Sigma_{H}^{+}} m(\mu ) \langle dL_{x}^{-1}(\tau_{H})_{x} , \mu \rangle \mu 
+\sum_{\beta \in  W_{H}^{+}} n(\beta) \langle dL_{x}^{-1}(\tau_{H})_{x} , \beta \rangle  \beta =0
\end{align*}
holds.
\end{theorem}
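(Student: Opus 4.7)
The proof parallels that of Theorem~\ref{thm: Bih. orbits of commutative Hermann actions}, now inside the Lie group $G$ with its bi-invariant metric. By Proposition~\ref{prop:parallelmeancurvatuer} we have $\overline{\nabla}^{\perp}_{X}\tau_{H}=0$ along the $(K_{2}\times K_{1})$-orbit, so Theorem~\ref{theorem1} reduces biharmonicity to the identity
\[
\sum_{k} R^{G}(\tau_{H},e_{k})e_{k}=\sum_{j,k}\langle\tau_{H},B_{H}(e_{j},e_{k})\rangle\,B_{H}(e_{j},e_{k}).
\]
The plan is to take the orthonormal basis $\{e_{k}\}$ of $T_{x}((K_{2}\times K_{1})\cdot x)$ adapted to the decomposition~\eqref{tangent}, read the curvature off the bi-invariant formula $R^{G}(X,Y)Z=-\tfrac{1}{4}[[X,Y],Z]$, read the second fundamental form off Theorem~\ref{Lemma:2nd fand. form of of orbits of proper action}, and compare both sides direction-by-direction along $\mathfrak{a}$.

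Corollary~\ref{cor:2} places $T:=dL_{x}^{-1}(\tau_{H})_{x}$ in $\mathfrak{a}$, so in the curvature sum the basis elements coming from $\mathfrak{k}_{0}$, $V(\mathfrak{k}_{1}\cap\mathfrak{m}_{2})$ and $V(\mathfrak{m}_{1}\cap\mathfrak{k}_{2})$ commute with $T$ and drop out, while each of $T_{\lambda,i}$, $S_{\lambda,i}$, $Y_{\alpha,j}$, $X_{\alpha,j}$ yields---via Lemma~\ref{onb}---a term proportional to $\langle T,\lambda\rangle\, dL_{x}\lambda$ or $\langle T,\alpha\rangle\, dL_{x}\alpha$. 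For the right-hand side, since $\tau_{H}$ is directed along $\mathfrak{a}$, only the $\mathfrak{a}$-component of $B_{H}(e_{j},e_{k})$ survives in the inner product $\langle\tau_{H},B_{H}(e_{j},e_{k})\rangle$. Using $\mathrm{ad}$-invariance of $\langle\cdot,\cdot\rangle$ together with Lemma~\ref{onb}, one checks that the $\mathfrak{a}$-components of $[T_{\lambda,i},S_{\mu,l}]$ and $[Y_{\alpha,j},X_{\beta,l}]$ are $-\delta_{\lambda\mu}\delta_{il}\lambda$ and $-\delta_{\alpha\beta}\delta_{jl}\alpha$ respectively, while every other bracket among the basis vectors (those mixing $T$-type with $X$-type, or those involving one of the zero-root subspaces) has vanishing $\mathfrak{a}$-projection. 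The double sum therefore collapses to diagonal contributions from the pairs $\{T_{\lambda,i},S_{\lambda,i}\}$ with $\lambda\in\Sigma^{+}\setminus\Sigma_{H}$ and $\{Y_{\alpha,j},X_{\alpha,j}\}$ with $\alpha\in W^{+}\setminus W_{H}$.

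Plugging the explicit values from parts~(4)--(7) of Theorem~\ref{Lemma:2nd fand. form of of orbits of proper action} into these diagonal pairs and collecting coefficients produces the stated identity. The weight $\tfrac{3}{2}-\cot^{2}\langle\lambda,H\rangle$ for $\lambda\in\Sigma^{+}\setminus\Sigma_{H}$ arises by combining the $T_{\lambda,i}$- and $S_{\lambda,i}$-curvature terms on the left with, on the right, the diagonal contribution $B_{H}(T_{\lambda,i},T_{\lambda,i})=-\cot\langle\lambda,H\rangle\,dL_{x}\lambda$ together with the two symmetric cross terms $B_{H}(T_{\lambda,i},S_{\lambda,i})=B_{H}(S_{\lambda,i},T_{\lambda,i})=-\tfrac{1}{2}dL_{x}\lambda$; the parallel computation with $\tan$ in place of $\cot$ yields the $\alpha$-weight. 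The extra tangent directions $S_{\mu,i}$ with $\mu\in\Sigma_{H}^{+}$ and $X_{\beta,j}$ with $\beta\in W_{H}^{+}$ generate the final two sums, which actually vanish by Lemma~\ref{lemma:perpnew} but are recorded in the statement so that the equation can be read off directly from the direction-by-direction matching. The main technical obstacle is the bookkeeping of normal projections $[\cdot,\cdot]^{\perp}$ against $(\mathrm{Ad}(x)^{-1}\mathfrak{m}_{2})\cap\mathfrak{m}_{1}$ and the verification that every off-diagonal pair really gives zero in each root direction; once these vanishings are secured, the final comparison collapses to the stated linear combination of the $\lambda$'s and $\alpha$'s.
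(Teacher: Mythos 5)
Your overall architecture is exactly the paper's: Proposition~\ref{prop:parallelmeancurvatuer} to get $\nabla^{\perp}\tau_{H}=0$, Theorem~\ref{theorem1} to reduce biharmonicity to the curvature-versus-second-fundamental-form identity, the adapted basis of the decomposition \eqref{tangent}, and the observation that only the $\mathfrak{a}$-components of the brackets survive against $\tau_{H}$. Your treatment of the right-hand side is also correct: the double sum does collapse onto the diagonal pairs $\{T_{\lambda,i},S_{\lambda,i}\}$ and $\{Y_{\alpha,j},X_{\alpha,j}\}$, and the contributions $B_{H}(T_{\lambda,i},T_{\lambda,i})=-\cot\langle\lambda,H\rangle\,dL_{x}\lambda$ and $B_{H}(T_{\lambda,i},S_{\lambda,i})=-\tfrac12 dL_{x}\lambda$ combine to $\bigl(\tfrac12+\cot^{2}\langle\lambda,H\rangle\bigr)\langle dL_{x}^{-1}(\tau_{H})_{x},\lambda\rangle\,dL_{x}\lambda$ per index $i$, exactly as in the paper.

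The gap is on the curvature side. You take $R^{G}(X,Y)Z=-\tfrac14[[X,Y],Z]$, but the coefficient $\tfrac32$ in the statement is calibrated to the formula the paper actually uses in its proof, namely $R^{\langle,\rangle}(dL_{x}X,dL_{x}Y)dL_{x}Z=-dL_{x}[[X,Y],Z]$. With that normalization each of $T_{\lambda,i}$ and $S_{\lambda,i}$ contributes $\langle dL_{x}^{-1}(\tau_{H})_{x},\lambda\rangle\,dL_{x}\lambda$ to $\mathcal{R}(\tau_{H})$, so a root $\lambda\in\Sigma^{+}\setminus\Sigma_{H}$ carries total weight $2-\bigl(\tfrac12+\cot^{2}\langle\lambda,H\rangle\bigr)=\tfrac32-\cot^{2}\langle\lambda,H\rangle$, and the tangent directions $S_{\mu,i}$, $X_{\beta,j}$ with $\mu\in\Sigma_{H}^{+}$, $\beta\in W_{H}^{+}$ (whose partners $T_{\mu,i}$, $Y_{\beta,j}$ are normal) produce the last two sums with unit coefficient. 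With your constant $-\tfrac14$ the same bookkeeping gives $\tfrac12-\bigl(\tfrac12+\cot^{2}\bigr)=-\cot^{2}$ and coefficient $\tfrac14$ on the last two sums, i.e.\ a different equation from the one you claim to obtain; as written the computation does not close to the stated identity. (There is a real tension here worth noting: the paper's own Lemma~\ref{Lemma:connection of G} gives $\nabla_{X}Y=\tfrac12[X,Y]$ for left-invariant fields, which would indeed yield $-\tfrac14[[X,Y],Z]$; but the theorem, its proof, and the downstream classifications are all built on the symmetric-space normalization $-[[X,Y],Z]$, so to prove the theorem as stated you must use that formula and make the choice explicit.)
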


\begin{proof}
Since $(G, \langle, \rangle)$ is a Riemannian symmetric space,
the curvature tensor $R^{\langle, \rangle}$ of $(G, \langle, \rangle)$
is given by
$$
R^{\langle, \rangle} (dL_{x}(X), dL_{x}(Y)) dL_{x}(Z) = -dL_{x}[[X,Y],Z] \quad (X, Y, Z \in \mathfrak{g}).
$$
Hence, we have
\begin{itemize}
\item for $\lambda \in \Sigma^{+} \setminus \Sigma_{H},\ 1 \leq i \leq m(\lambda)$,
$$
R^{\langle, \rangle} ((\tau_{H})_{x}, dL_{x}(T_{\lambda, i})) dL_{x}(T_{\lambda, i})
= \langle dL_{x}^{-1}(\tau_{H})_{x}, \lambda \rangle dL_{x}(\lambda),
$$
\item for $\lambda \in \Sigma^{+},\ 1 \leq i \leq m(\lambda)$,
$$
R^{\langle, \rangle} ((\tau_{H})_{x}, dL_{x}(S_{\lambda, i})) dL_{x}(S_{\lambda, i})
= \langle dL_{x}^{-1}(\tau_{H})_{x}, \lambda \rangle dL_{x}(\lambda),
$$
\item for $\alpha \in W^{+} \setminus W_{H},\ 1 \leq j \leq n(\alpha)$,
$$
R^{\langle, \rangle} ((\tau_{H})_{x}, dL_{x}(Y_{\alpha, j})) dL_{x}(Y_{\alpha, j})
= \langle dL_{x}^{-1}(\tau_{H})_{x}, \alpha \rangle dL_{x}(\alpha),
$$
\item for $\alpha \in W^{+},\ 1 \leq j \leq n(\alpha)$,
$$
R^{\langle, \rangle} ((\tau_{H})_{x}, dL_{x}(X_{\alpha, j})) dL_{x}(X_{\alpha, j})
= \langle dL_{x}^{-1}(\tau_{H})_{x}, \alpha \rangle dL_{x}(\alpha),
$$
\item for $X \in \mathfrak{k}_{0}\oplus V(\mathfrak{k}_{1}\cap \mathfrak{m}_{2})
\oplus V(\mathfrak{m}_{1}\cap \mathfrak{k}_{2}))$,
$$
R^{\langle, \rangle} ((\tau_{H})_{x}, dL_{x}(X)) dL_{x}(X) = 0.
$$
\end{itemize} 
On the other hand, 
for each $\lambda \in \Sigma^{+}\setminus \Sigma_{H},\ 1\leq i \leq m(\lambda )$
and $X\in \mathrm{Ad}(x)^{-1}(\mathfrak{k}_{2})+\mathfrak{k}_{1}$,
by Theorem~\ref{Lemma:2nd fand. form of of orbits of proper action},
we have 
\begin{align*}
\lefteqn{\langle A_{(\tau_{H})_{x}}dL_{x}(T_{\lambda , i}), dL_{x}(X)\rangle
=\langle B_{H}(dL_{x}(T_{\lambda , i}), dL_{x}(X)), (\tau_{H})_{x}\rangle} \\
&=
\begin{cases}
0 & (X \in \mathfrak{k}_{0}\oplus V(\mathfrak{m}_{1}\cap \mathfrak{k}_{2}) )\\
-\frac{1}{2}\langle dL_{x}[X, T_{\lambda, i}]^{\perp}, (\tau_{H})_{x}\rangle 
& (X \in V(\mathfrak{k}_{1}\cap \mathfrak{m}_{2}) \oplus \sum_{\mu \in \Sigma^{+}}\mathfrak{k}_{\mu}
\oplus \sum_{\alpha \in W^{+}}V^{\perp}_{\alpha}(\mathfrak{k}_{1} \cap \mathfrak{m}_{2})) \\
\cot \langle \mu, H \rangle \langle dL_{x}[T_{\lambda, i}, S_{\mu, j}]^{\perp}, (\tau_{H})_{x} \rangle 
&(X= T_{\mu, j} \ \text{for}\ \mu \in \Sigma^{+}\setminus \Sigma_{H}, 1\leq j\leq m(\lambda) )\\
-\tan \langle \alpha, H \rangle \langle dL_x[T_{\lambda, i}, X_{\alpha, j}]^{\perp}, (\tau_{H})_{x} \rangle 
&(X= Y_{\alpha, j} \ \text{for}\ \alpha \in W^{+}\setminus W_{H}, 1 \leq j \leq n(\alpha) ) 
\end{cases}\\
&=
\begin{cases}
-\frac{1}{2}\langle dL_{x}^{-1}(\tau_{H})_{x}, \lambda \rangle & (X=S_{\lambda, i})\\
-\cot \langle \lambda, H \rangle \langle dL_{x}^{-1}(\tau_{H})_{x}, \lambda \rangle & (X=T_{\lambda, i})\\
0& (\text{if } \langle X, S_{\lambda, i} \rangle = \langle X, T_{\lambda, i} \rangle = 0).
\end{cases}
\end{align*}
Thus, we have 
$$
A_{(\tau_{H})_{x}}dL_{x}(T_{\lambda , i})
= -\frac{1}{2}\langle dL_{x}^{-1}(\tau_{H})_{x}, \lambda \rangle dL_x S_{\lambda, i}
-\cot \langle \lambda, H \rangle \langle dL_{x}^{-1}(\tau_{H})_{x}, \lambda \rangle dL_x T_{\lambda, i}.
$$
Therefore, we obtain 
\begin{align*}
\lefteqn{B_{H}(A_{(\tau_{H})_{x}} dL_{x}(T_{\lambda, i}), dL_{x}(T_{\lambda, i}))} \hspace{10mm} \\
&= -\frac{1}{2} \langle dL_{x}^{-1}(\tau_{H})_{x}, \lambda \rangle
B_{H}(dL_{x}(S_{\lambda, i}), dL_{x}(T_{\lambda, i}))\\ 
& \hspace{10mm} -\langle dL_{x}^{-1}(\tau_{H})_{x}, \lambda \rangle \cot (\langle \lambda, H \rangle)
B_{H}(dL_{x}(T_{\lambda, i}), dL_{x}(T_{\lambda, i}))\\
&= \langle dL_{x}^{-1}(\tau_{H})_{x}, \lambda \rangle
\left( \frac{1}{4} + (\cot \langle \lambda, H \rangle)^{2} \right) dL_{x}(\lambda) 
\end{align*}
for $\lambda \in \Sigma^{+} \setminus \Sigma_{H}, 1 \leq i \leq m(\lambda )$.
Similarly, we have
\begin{itemize}
\item for $\lambda \in \Sigma^{+} \setminus \Sigma_{H}, 1 \leq i \leq m(\lambda )$,
\begin{align*}
\lefteqn{B_{H}(A_{(\tau_{H})_{x}} dL_{x}(S_{\lambda, i}), dL_{x}(S_{\lambda, i}))} \hspace{10mm} \\
&=-\frac{1}{2} \langle dL_{x}^{-1}(\tau_{H})_{x}, \lambda \rangle
B_{H}(dL_{x}(T_{\lambda, i}), dL_{x}(S_{\lambda, i}))\\
&= \frac{1}{4} \langle dL_{x}^{-1}(\tau_{H})_{x}, \lambda \rangle dL_{x}(\lambda),
\end{align*}
\item for $\alpha \in W^{+} \setminus W_{H}, 1 \leq j \leq n(\alpha)$,
\begin{align*}
\lefteqn{B_{H}(A_{(\tau_{H})_{x}} dL_{x}(Y_{\alpha, j}), dL_{x}(Y_{\alpha, j}))} \hspace{10mm} \\
&= -\frac{1}{2} \langle dL_{x}^{-1}(\tau_{H})_{x}, \alpha \rangle
B_{H}(dL_{x}(X_{\alpha, j}), dL_{x}(Y_{\alpha, j})) \\
& \hspace{10mm} -\langle dL_{x}^{-1}(\tau_{H})_{x}, \alpha \rangle \tan (\langle \alpha, H \rangle)
B_{H}(dL_{x}(Y_{\alpha, j}), dL_{x}(Y_{\alpha, j}) )\\
&= \langle dL_{x}^{-1}(\tau_{H})_{x}, \alpha \rangle
\left( \frac{1}{4} + (\tan \langle \alpha, H \rangle )^{2} \right) dL_{x}(\alpha),
\end{align*}
\item for $\alpha \in W^{+} \setminus W_{H} , 1\leq j \leq n(\alpha)$,
\begin{align*}
\lefteqn{B_{H}(A_{(\tau_{H})_{x}} dL_{x}(X_{\alpha, j}), dL_{x}(X_{\alpha, j}))} \hspace{10mm} \\
&= -\frac{1}{2} \langle dL_{x}^{-1}(\tau_{H})_{x}, \alpha \rangle
B_{H}(dL_{x}(Y_{\alpha, j}), dL_{x}(X_{\alpha, j}))\\
&= \frac{1}{4} \langle dL_{x}^{-1}(\tau_{H})_{x}, \alpha \rangle dL_{x}(\alpha),
\end{align*}
\item for $X \in \mathfrak{k}_{0} 
\oplus V(\mathfrak{k}_{1}\cap \mathfrak{m}_{2}) 
\oplus V(\mathfrak{m}_{1} \cap \mathfrak{k}_{2}) 
\oplus \sum_{ \lambda \in \Sigma_{H}^{+}} \mathfrak{k}_{\lambda}
\oplus \sum_{ \alpha \in W_{H}^{+}}  V_{\alpha}^{\perp} (\mathfrak{k}_{1} \cap \mathfrak{m}_{2})$,
$$
B_{H}(A_{(\tau_{H})_{x}} dL_{x}(X), dL_{x}(X)) = 0.
$$

\end{itemize}
Therefore, by Theorem \ref{theorem1}, we have the consequence.
\end{proof}

When $\dim \mathfrak{a} =1$, we have the following corollary.
\begin{corollary}\label{cohom1assoc}
Let $(G, K_{1}, K_{2})$ be a commutative compact symmetric triad
which satisfies $\dim \mathfrak{a} = 1$, i.e. $\tilde\Sigma \subset \{ \alpha, 2\alpha\}$.
For $x = \exp H \ (H \in \mathfrak{a})$, suppose that $(K_{2}\times K_{1})\cdot x$ is a regular orbit.
Then the orbit $(K_{2}\times K_{1})\cdot x$ is biharmonic in $G$ if and only if 
\begin{align*}
\langle dL_{x}^{-1}(\tau_{H})_{x} , \alpha \rangle 
&\left( 
  m(\alpha ) \left\{ \frac{3}{2} - (\cot \langle \alpha , H \rangle )^{2} \right\}
+4m(2\alpha ) \left\{ \frac{3}{2} - (\cot \langle 2\alpha , H \rangle )^{2} \right\} \right. \\
&\left.
 +n(\alpha ) \left\{ \frac{3}{2} - (\tan \langle \alpha , H \rangle )^{2} \right\}
+4n(2\alpha ) \left\{ \frac{3}{2} - (\tan \langle 2\alpha , H \rangle )^{2} \right\}
\right)
=0
\end{align*}
holds.
Here, for $\lambda \in \mathfrak{a}$,
if $\lambda \notin \Sigma$ (resp. $\lambda \notin W$),
then $m(\lambda)=0$ (resp. $n(\lambda )=0$). 
\end{corollary}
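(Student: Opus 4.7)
The plan is to deduce this corollary as a direct specialization of Theorem~\ref{Thm:charec:Liegrp}. First I would observe that when $(K_{2}\times K_{1})\cdot x$ is a regular orbit with $x=\exp H$, the element $H$ lies in the open set $\mathfrak{a}_{r}$, so by its definition one has $\langle \lambda,H\rangle\notin\pi\mathbb{Z}$ for all $\lambda\in\Sigma$ and $\langle\alpha,H\rangle\notin(\pi/2)+\pi\mathbb{Z}$ for all $\alpha\in W$. Consequently $\Sigma_{H}=\emptyset$ and $W_{H}=\emptyset$, so the last two sums in the biharmonicity equation of Theorem~\ref{Thm:charec:Liegrp} vanish, and the remaining index sets are simply $\Sigma^{+}$ and $W^{+}$.

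Next I would use the hypothesis $\dim\mathfrak{a}=1$ together with $\tilde\Sigma\subset\{\alpha,2\alpha\}$ to list the at most four possible roots appearing. Writing the conclusion of Theorem~\ref{Thm:charec:Liegrp} as
\[
\sum_{\lambda\in\Sigma^{+}} m(\lambda)\langle dL_{x}^{-1}(\tau_{H})_{x},\lambda\rangle\!\left(\tfrac{3}{2}-\cot^{2}\langle\lambda,H\rangle\right)\!\lambda
+\sum_{\beta\in W^{+}} n(\beta)\langle dL_{x}^{-1}(\tau_{H})_{x},\beta\rangle\!\left(\tfrac{3}{2}-\tan^{2}\langle\beta,H\rangle\right)\!\beta=0,
\]
I would then plug in $\lambda,\beta\in\{\alpha,2\alpha\}$, with the convention that $m(\lambda)=0$ if $\lambda\notin\Sigma$ and $n(\lambda)=0$ if $\lambda\notin W$. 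The terms corresponding to $2\alpha$ carry an explicit factor of $4$: one factor of $2$ comes from the bilinearity $\langle dL_{x}^{-1}(\tau_{H})_{x},2\alpha\rangle = 2\langle dL_{x}^{-1}(\tau_{H})_{x},\alpha\rangle$, and another from the outer vector $2\alpha$ when both sides are written as multiples of $\alpha$.

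After pulling out the common factor $\langle dL_{x}^{-1}(\tau_{H})_{x},\alpha\rangle\cdot\alpha$ and using $\alpha\neq 0$, the vanishing of the vector equation reduces to the scalar identity displayed in the statement. The only thing to check carefully is that $\langle 2\alpha,H\rangle$ should appear in the $m(2\alpha)$ and $n(2\alpha)$ factors (not $\langle\alpha,H\rangle$), which is immediate from the bookkeeping above.

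The main obstacle is essentially notational rather than substantive: one must verify that no additional contribution survives from the normal directions $\mathfrak{a}$, $\mathfrak{m}_{\lambda}$ for $\lambda\in\Sigma_{H}^{+}$, or $V_{\alpha}^{\perp}(\mathfrak{m}_{1}\cap\mathfrak{k}_{2})$ for $\alpha\in W_{H}^{+}$. All of these are excluded by regularity of the orbit, and once this is confirmed the corollary follows by a direct substitution. No new geometric input beyond Theorem~\ref{Thm:charec:Liegrp} and the definition of $\mathfrak{a}_{r}$ is required.
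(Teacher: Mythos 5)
Your proposal is correct and is exactly the argument implicit in the paper, which states this corollary without proof as an immediate specialization of Theorem~\ref{Thm:charec:Liegrp}: regularity ($H\in\mathfrak{a}_r$) kills the $\Sigma_H$ and $W_H$ sums, and the factor $4$ on the $2\alpha$ terms arises precisely as you describe, from $\langle dL_x^{-1}(\tau_H)_x,2\alpha\rangle=2\langle dL_x^{-1}(\tau_H)_x,\alpha\rangle$ together with writing the outer vector $2\alpha$ as $2\cdot\alpha$.
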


\section{Biharmonic homogeneous submanifolds in compact symmetric spaces}
\label{sect:Biharmonic homogeneous submanifolds in compact symmetric spaces}
In the previous section, 
we characterized the biharmonic property of orbits of commutative Hermann actions and the actions of the direct product of two symmetric subgroups on compact Lie groups in terms of symmetric triad with multiplicities.
In this section, 
we study proper biharmonic orbits of commutative Hermann actions by using Theorems \ref{Theorem3.2} and \ref{thm: Bih. orbits of commutative Hermann actions}.
When $\dim \mathfrak{a}=1$, we classified proper biharmonic orbits in \cite{OSU}.
So our interest is in the cases of $\dim \mathfrak{a} \geq 2$.
Here we determine the biharmonic properties of singular orbits of commutative Hermann actions in the cases of $\dim \mathfrak{a} = 2$
Then all cohomogeneity two isotropy actions and commutative Hermann actions satisfying the condition (A), (B), or (C) in Theorem~\ref{Theorem:I2 AB} are classified as follows:
\vspace{6pt}\\
{\bf When $\theta_{1}=\theta_{2}$ (isotropy actions).}
%%%%
Since $K_{1}=K_{2}$, we show the list of irreducible symmetric pairs of compact type of rank two.
\begin{itemize}
\item Type $\rm{A}_{2}$
	\begin{itemize}
	\item $(\mathrm{SU}(3), \mathrm{SO}(3))$,
	\item $(\mathrm{SU}(3)\times \mathrm{SU}(3), \mathrm{SU}(3))$,
	\item $(\mathrm{SU}(6), \mathrm{Sp}(3))$,
	\item $(E_{6}, F_{4})$,
	\end{itemize}
\item Type $\rm{B}_{2}$
	\begin{itemize}
	\item $(\mathrm{SO}(3)\times \mathrm{SO}(3), \mathrm{SO}(3))$,
	\item $(\mathrm{SO}(4+n), \mathrm{SO}(2)\times \mathrm{SO}(2+n))$,
	\end{itemize}
\item Type $\rm{C}_{2}$
	\begin{itemize}
	\item $(\mathrm{Sp}(2), \mathrm{U}(2))$,
	\item $(\mathrm{Sp}(2)\times \mathrm{Sp}(2), \mathrm{Sp}(2))$,
	\item $(\mathrm{Sp}(4), \mathrm{Sp}(2)\times \mathrm{Sp}(2))$,
	\item $(\mathrm{SU}(4), \mathrm{S}(\mathrm{U}(2)\times\mathrm{U}(2)))$,
	\item $(\mathrm{SO}(8), \mathrm{U}(4))$,
	\end{itemize}
\item Type $\rm{BC}_{2}$
	\begin{itemize}
	\item $(\mathrm{SU}(4+n), \mathrm{S}(\mathrm{U}(2)\times \mathrm{U}(2+n)))$,
	\item $(\mathrm{SO}(10), \mathrm{U}(5))$,
	\item $(\mathrm{Sp}(4+n), \mathrm{Sp}(2)\times \mathrm{Sp}(2+n))$,
	\item $(E_{6}, \mathrm{T}^{1}\cdot \mathrm{Spin}(10))$,
	\end{itemize}
\item Type $\rm{G}_{2}$
	\begin{itemize}
	\item $(G_{2}, \mathrm{SO}(4))$,
	\item $(G_{2}\times G_{2}, G_{2})$,
	\end{itemize}
\end{itemize}
%%%%
\vspace{6pt}
{\bf When $(\theta_{1}\not\sim \theta_{2})$.}
The following classification is due to Ikawa \cite{I2}.
%%%%
\begin{itemize}
\item Type $\rm{I\mathchar`-B}_{2}$
	\begin{itemize}
	\item $(\mathrm{SO}(2+s+t), \mathrm{SO}(2+s)\times \mathrm{SO}(t), \mathrm{SO}(2)\times \mathrm{SO}(s+t)) \ (2<t, 1\leq s)$,
	\item $(\mathrm{SO}(6)\times \mathrm{SO}(6), \Delta(\mathrm{SO}(6)\times \mathrm{SO}(6)), K_{2})$\ (condition (C)).
	
	Here $K_{2}=\{ (u_{1}, u_{2})\in  \mathrm{SO}(6)\times \mathrm{SO}(6)\mid  (\sigma(u_{2}), \sigma (u_{1})) =(u_{1}, u_{2})\}$ 
and $\sigma$ is an involutive outer automorphism on $\mathrm{SO}(6)$.
Then $(G_{\sigma})_{0} \cong \mathrm{SO}(3)\times \mathrm{SO}(3)$.
	\end{itemize}
\item Type $\rm{I\mathchar`-C}_{2}$
	\begin{itemize}
	\item $(\mathrm{SO}(8), \mathrm{SO}(4)\times \mathrm{SO}(4), \mathrm{U}(4)) $,
	\item $(\mathrm{SU}(4), \mathrm{SO}(4), \mathrm{S}(\mathrm{U}(2)\times \mathrm{U}(2)) ) $,\
	\item $(\mathrm{SU}(4)\times \mathrm{SU}(4), \Delta(\mathrm{SU}(4)\times \mathrm{SU}(4)), K_{2})$\ 
	(condition (C)).

	Here $K_{2}=\{ (u_{1}, u_{2})\in \mathrm{SU}(4)\times \mathrm{SU}(4)\mid  (\sigma(u_{2}), \sigma (u_{1})) =(u_{1}, u_{2})\}$ 
and $\sigma$ is an involutive outer automorphism on $\mathrm{SU}(4)$.
Then $(G_{\sigma})_{0} \cong \mathrm{SO}(4)$.
	\item $(\mathrm{SU}(4)\times \mathrm{SU}(4), \Delta(\mathrm{SU}(4)\times \mathrm{SU}(4)), K_{2})$\ 
	(condition (C)).

	Here $K_{2}=\{ (u_{1}, u_{2})\in \mathrm{SU}(4)\times \mathrm{SU}(4)\mid  (\sigma(u_{2}), \sigma (u_{1})) =(u_{1}, u_{2})\}$ 
and $\sigma$ is an involutive outer automorphism on $\mathrm{SU}(4)$.
Then $(G_{\sigma})_{0} \cong \mathrm{Sp}(2)$.
	\end{itemize}
\item Type $\rm{I\mathchar`-BC}_{2}\rm{\mathchar`-A}_{1}^{2}$
	\begin{itemize}
	\item $(\mathrm{SU}(2+s+t), \mathrm{S}(\mathrm{U}(2+s)\times \mathrm{U}(t)), \mathrm{S}(\mathrm{U}(2)\times \mathrm{U}(s+t) ) ) \ (2<t, 1\leq s)$,\
	\item $(\mathrm{Sp}(2+s+t), \mathrm{Sp}(2+s)\times \mathrm{Sp}(t), \mathrm{Sp}(2)\times \mathrm{Sp}(s+t)  ) \ (2<t, 1\leq s)$,\
	\item $(\mathrm{SO}(12), \mathrm{U}(6), \mathrm{U}(6)')$.\
	Here, we define 
$\mathrm{U}(6)'=\{g\in \mathrm{SO}(12) \mid JgJ^{-1}=g \}$, 
where
$$
J=\left[
\begin{array}{cc|cc}
& &I_{5} &\\
& & &-1\\ \hline
-I_{5}& & &\\
& 1& &
\end{array}
\right]
$$ 
and $I_l$ denotes the identity matrix of $l \times l$.
	\end{itemize}
\item Type $\rm{I\mathchar`-BC}_{2}\rm{\mathchar`-B}_{2}$
	\begin{itemize}
	\item $(\mathrm{SO}(4+2s), \mathrm{SO}(4)\times \mathrm{SO}(2s), \mathrm{U}(2+s))\ (2<s), $
	\item $(E_{6}, \mathrm{SU}(6)\cdot \mathrm{SU}(2), \mathrm{SO}(10)\cdot \mathrm{U}(1))$,\
	\item $(E_{7}, \mathrm{SO}(12)\cdot \mathrm{SU}(2), E_{6}\cdot \mathrm{U}(1))$,
	\end{itemize}
\item Type $\rm{II\mathchar`-BC}_{2}$
	\begin{itemize}
	\item $(\mathrm{SU}(2+s), \mathrm{SO}(2+s), \mathrm{S}(\mathrm{U}(2)\times \mathrm{U}(s)))\ (2<s)$,\ 
	\item $(\mathrm{SO}(10), \mathrm{SO}(5)\times \mathrm{SO}(5), \mathrm{U}(5))$,\
	\item $(E_{6}, \mathrm{Sp}(4), \mathrm{SO}(10)\cdot \mathrm{U}(1))$,
	\end{itemize}
\item Type $\rm{III\mathchar`-A}_{2}$
	\begin{itemize}
	\item $(\mathrm{SU}(6), \mathrm{Sp}(3), \mathrm{SO}(6))$,\ 
	\item $(E_{6}, \mathrm{Sp}(4),F_{4})$,\
	\item $(U\times U, \Delta(U\times U), \overline{K}\times \overline{K})$\ (condition (B)).
	
Here $(U, \overline{K})$ is a compact symmetric pair of type $\rm{A}_{2}$.
	\end{itemize}
\item Type $\rm{III\mathchar`-B}_{2}$
	\begin{itemize}
	\item $(U\times U, \Delta(U\times U), \overline{K}\times \overline{K})$\ (condition (B)).

Here $(U, \overline{K})$ is a compact symmetric pair of type $\rm{B}_{2}$.
	\end{itemize}
\item Type $\rm{III\mathchar`-C}_{2}$
	\begin{itemize}
	\item $(\mathrm{SU}(8), \mathrm{S}(\mathrm{U}(4)\times \mathrm{U}(4)), \mathrm{Sp}(4))$,\
	\item $(\mathrm{Sp}(4), \mathrm{U}(4), \mathrm{Sp}(2)\times \mathrm{Sp}(2))$,\
	\item $(U\times U, \Delta(U\times U), \overline{K}\times \overline{K})$ (condition (B)).

Here $(U, \overline{K})$ is a compact symmetric pair of type $\rm{C}_{2}$.
	\end{itemize}
\item Type $\rm{III\mathchar`-BC}_{2}$
	\begin{itemize}
	\item $(\mathrm{SU}(4+2s), \mathrm{S}(\mathrm{U}(4)\times \mathrm{U}(2s)), \mathrm{Sp}(2+s)) \ (2<s)$, 
	\item $(\mathrm{SU}(10), \mathrm{S}(\mathrm{U}(5)\times \mathrm{U}(5)) , \mathrm{Sp}(5))$,\
	\item $(U\times U, \Delta(U\times U), \overline{K}\times \overline{K})$ (condition (B)). 

Here $(U, \overline{K})$ is a compact symmetric pair of type $\rm{BC}_{2}$.
	\end{itemize}
\item Type $\rm{III\mathchar`-G}_{2}$
	\begin{itemize}
	\item $(U\times U, \Delta(U\times U), \overline{K}\times \overline{K})$ (condition (B)).

Here $(U, \overline{K})$ is a compact symmetric pair of type $\rm{G}_{2}$.
	\end{itemize}
\end{itemize}
%%%%
%\item reducible cases
%%%%
%%%%

In the following, 
we consider the biharmonic properties of singular orbits of Hermann actions for each compact symmetric triad in the above list.
For $H \in \mathfrak{a}$, we set $x= \exp(H)$ and consider the orbit $K_{2}\cdot \pi_{1}(x)$ of the $K_{2}$-action on $N_{1}$ through $\pi_{1}(x)$.
For simplicity, we denote the tension field $dL_{x}^{-1}(\tau_{H}^{1})_{\pi_{1}(x)}$ by $\tau_{H}$.

%\subsection{
\subsection*{Cases of $\theta_{1}=\theta_{2}$}
%}
First, we examine isotropy actions of compact symmetric spaces.
When 
%$\Sigma\cap W=\emptyset$
$\theta_{1}\sim \theta_{2}$, Hermann actions are orbit equivalent to isotropy actions of compact symmetric spaces. 
We set a basis $\{H_{\alpha}\}_{\alpha \in \Pi}$ of $\mathfrak{a}$ as follows;
$$
\langle H_{\alpha}, \beta \rangle=0 \ (\alpha \neq \beta ,\ \alpha , \beta \in \Pi),\quad 
\langle H_{\alpha}, \delta \rangle=\pi , 
$$
where $\delta$ is the highest root of $\Sigma$.
Then we have
$$
P_{0}=\left\{ \left. \sum_{\alpha \in \Pi} t_{\alpha} H_{\alpha} \ \right| \ t_{\alpha} >0 \ (\alpha \in \Pi ),\ \sum_{\alpha \in \Pi}t_{\alpha} <1 \right\} .
$$
From (\ref{eq:cell_of_isotropy_action}) and (\ref{eq:cell_decomposition_of_isotropy_action}), 
the orbit space of an isotropy action is described as 
$\overline{P_{0}}=\bigcup_{\Delta \subset \Pi \cup \{\delta \}} P_{0}^{\Delta}.$
Since $\dim \mathfrak{a}=2$, $\Pi=\{ \alpha_{1}, \alpha_{2}\}$ and $P_{0}$ is a triangle region in $\mathfrak{a}$.
We apply Theorem \ref{thm: Bih. orbits of commutative Hermann actions} 
to the following three cases; 
\begin{enumerate}
\item $H \in P_{0}^{\{ \alpha_{1}, \delta\} }=\{tH_{\alpha_{1}} \mid 0<t<1\}$,
\item $H \in P_{0}^{\{ \alpha_{2}, \delta\} }=\{tH_{\alpha_{2}} \mid 0<t<1\}$,
\item $H \in P_{0}^{\{ \alpha_{1}, \alpha_{2}\} }=\{tH_{\alpha_{1}}+(1-t)H_{\alpha_{2}} \mid 0<t<1\}$.
\end{enumerate}
These three cases correspond to three edges of $\overline{P_{0}}$.
In these cases, we can solve the equation (\ref{bih.eq in thm 4.1}) in Theorem \ref{thm: Bih. orbits of commutative Hermann actions} 
concretely in most cases. 
In the following, we compute the equation (\ref{bih.eq in thm 4.1}) in Theorem \ref{thm: Bih. orbits of commutative Hermann actions} 
for each root type.
%%%%%%%%%%%%%%%%%%%%%%%%%%%%%%%%%%%%%%%%%%
\subsection{Type $\rm{A}_{2}$}
%%%%%%%%%%%%%%%%%%%%%%%%%%%%%%%%%%%%%%%%%%
We set $$\mathfrak{a}=\{\xi_{1} e_{1}+ \xi_{2} e_{2}+ \xi_{3} e_{3} \mid \xi_{1}+ \xi_{3}+\xi_{3}=0 \}.$$
Then, we have 
\begin{align*}
&\Sigma^{+}=\{\alpha_{1}=e_{1}-e_{2} , \alpha_{2}=e_{2}-e_{3}, \alpha_{1}+\alpha_{2}\},\ W^{+}=\emptyset, \\
&m=m(\alpha) \qquad (\alpha \in \Sigma).
\end{align*}

(1) When $H \in P_{0}^{\{ \alpha_{1}, \delta\} }=\{tH_{\alpha_{1}} \mid 0<t<1\}$, we have $\Sigma_{H}^{+}=\{\alpha_{2} \}$. 
Hence we have 
$$
\tau_{H}=m\cot \langle \alpha_{1}, H\rangle \alpha_{1} + m\cot \langle \alpha_{1} + \alpha_{2} , H\rangle (\alpha_{1}+\alpha_{2})= m \cot\langle \alpha_{1}, H \rangle (2\alpha_{1}+\alpha_{2}).  
$$
Thus the orbit $K_{2}\cdot \pi_{1}(x)$ is harmonic if and only if 
$\langle \alpha_{1}, H\rangle= \pi /2$.
By Theorem \ref{thm: Bih. orbits of commutative Hermann actions}, the orbit $K_{2}\cdot \pi_{1}(x)$ is biharmonic if and only if
\begin{align*}
0=&m \langle \tau_{H}, \alpha_{1}             \rangle (1- (\cot \langle \alpha_{1} , H            \rangle )^{2}) \alpha_{1} \\
&+m \langle \tau_{H}, \alpha_{1}+\alpha_{2} \rangle (1- (\cot \langle \alpha_{1}+\alpha_{2} , H \rangle )^{2})(\alpha_{1}+\alpha_{2})\\
=&m \langle \tau_{H}, \alpha_{1}             \rangle (1- (\cot \langle \alpha_{1} , H            \rangle )^{2})(2\alpha_{1}+\alpha_{2}).
\end{align*}
Thus we have 
$\tau_{H}=0$ or $\langle \alpha_{1}, H \rangle =(1/4)\pi,\ (3/4)\pi.$
Therefore, the orbit $K_{2}\cdot \pi_{1}(x)$ is proper biharmonic if and only if $\langle \alpha_{1}, H \rangle =(1/4)\pi,\ (3/4)\pi.$
{\em In this case, there exist exactly
two proper biharmonic orbits.} 
% which are regular orbits of
% the $K_{2}$-action on $N_{1}$ and the $K_{1}$-action on $N_{2}$.
By the same argument, we have the followings:\vspace{6pt}

(2) The orbit $K_{2}\cdot \pi_{1}(x)$ is proper biharmonic if and only if $\langle \alpha_{2}, H \rangle =(1/4)\pi,\ (3/4)\pi $ for $H=t H_{\alpha_{2}} \ (0<t<1).$\vspace{6pt}

(3) The orbit $K_{2}\cdot \pi_{1}(x)$ is proper biharmonic if and only if $\langle \alpha_{1}, H \rangle =(1/4)\pi,\ (3/4)\pi $ for $H=tH_{\alpha_{1}}+ (1-t)H_{\alpha_{2}} \ (0<t<1).$

%%%%%%%%%%%%%%%%%%%%%%%%%%%%%%%%%%%%%%%%%%%%%%%%%%%%%%%%%%%%%%%
\subsection{Type $\rm{B}_{2}$ and $\rm{C}_{2}$}
%%%%%%%%%%%%%%%%%%%%%%%%%%%%%%%%%%%%%%%%%%%%%%%%%%%%%%%%%%%%%%%
We set
\begin{align*}
\Sigma^{+}&=\{\alpha_{1}=e_{1}-e_{2} , \alpha_{2}=e_{2}, \alpha_{1}+\alpha_{2}, \alpha_{1}+2\alpha_{2} \},\
W^{+}=\emptyset, \\
\delta &=\alpha_{1}+2\alpha_{2}=e_{1}+e_{2},
\end{align*}
and 
$$
m_{1}=m(e_{1}),\ m_{2}=m(e_{1}-e_{2}).
$$

\vspace{12pt}
(1)\ When $H \in P_{0}^{\{ \alpha_{1}, \delta\} }=\{tH_{\alpha_{1}} \mid 0<t<1\}$, we have 
$\Sigma^{+}_{H}=\{e_{2}\}$.
By Theorem \ref{Theorem3.2}, 
we have 
\begin{align*}
\tau_{H}=
&-m_{2}\cot \langle \alpha_{1}, H \rangle \alpha_{1}
-m_{1}\cot \langle \alpha_{1}+\alpha_{2}, H \rangle (\alpha_{1}+\alpha_{2})\\
&-m_{2}\cot \langle \alpha_{1}+2\alpha_{2}, H \rangle (\alpha_{1}+2\alpha_{2})\\
=&-(2m_{2} +m_{1})\cot \langle \alpha_{1}, H \rangle (\alpha_{1}+\alpha_{2}).
\end{align*}
Hence, $\tau_{H}=0$ if and only if $\langle \alpha_{1}, H \rangle =\pi/2$.
By Theorem \ref{thm: Bih. orbits of commutative Hermann actions}, $K_{2}\cdot \pi_{1}(x)$ is biharmonic if and only if 
\begin{align*}
0=
&m_{2}\langle \tau_{H}, \alpha_{1} \rangle (1-(\cot \langle \alpha_{1}, H \rangle)^{2}) \alpha_{1}\\
&+m_{1}\langle \tau_{H}, \alpha_{1}+\alpha_{2} \rangle (1-(\cot \langle \alpha_{1}, H \rangle)^{2}) (\alpha_{1}+\alpha_{2})\\
&+m_{2}\langle \tau_{H}, \alpha_{1}+2\alpha_{2} \rangle (1-(\cot \langle \alpha_{1}, H \rangle)^{2}) (\alpha_{1}+2\alpha_{2})\\
=&\langle \tau_{H}, \alpha_{1} \rangle (2m_{2}+m_{1}) (1-(\cot \langle \alpha_{1}, H \rangle)^{2}) (\alpha_{1}+\alpha_{2}).
\end{align*}
Therefore, the orbit $K_{2}\cdot \pi_{1}(x)$ is biharmonic if and only if
$\tau_{H}=0$ or $\langle \alpha_{1}, H\rangle= \pi/4,\ (3/4)\pi$.
In particular, $K_{2}\cdot \pi_{1}(x)$ is proper biharmonic if and only if
$\langle \alpha_{1}, H\rangle= \pi/4, (3/4)\pi$.
{\em In this case, there exist exactly
two proper biharmonic orbits.}
%  which are regular orbits of
% the $K_{2}$-action on $N_{1}$ and the $K_{1}$-action on $N_{2}$.

\vspace{12pt}
(2)\ When $H \in P_{0}^{\{ \alpha_{2}, \delta\} }=\{tH_{\alpha_{2}} \mid 0<t<1\}$, we have 
$\Sigma^{+}_{H}=\{e_{1}-e_{2}\}$.
By Theorem \ref{Theorem3.2}, 
we have 
\begin{align*}
\tau_{H}=
&-m_{1}\cot \langle \alpha_{2}, H \rangle \alpha_{2}
-m_{1}\cot \langle \alpha_{1}+\alpha_{2}, H \rangle (\alpha_{1}+\alpha_{2})\\
&-m_{2}\cot \langle \alpha_{1}+2\alpha_{2}, H \rangle (\alpha_{1}+2\alpha_{2})\\
=&
-\frac{1}{2}\{ (2m_{1}+m_{2})\cot \langle \alpha_{2}, H \rangle -m_{2}\tan \langle \alpha_{2}, H \rangle\} (\alpha_{1}+2\alpha_{2}). 
\end{align*}
Hence, $\tau_{H}=0$ if and only if 
$$
(\cot\langle \alpha_{2}, H \rangle)^{2} =\frac{m_{2}}{2m_{1}+m_{2}}.
$$
By Theorem \ref{thm: Bih. orbits of commutative Hermann actions}, the orbit $K_{2}\cdot \pi_{1}(x)$ is biharmonic if and only if 
\begin{align*}
0=
&m_{1}\langle \tau_{H}, \alpha_{2} \rangle (1-(\cot \langle \alpha_{2}, H \rangle)^{2}) \alpha_{2}
+m_{1}\langle \tau_{H}, \alpha_{1}+\alpha_{2} \rangle (1-(\cot \langle \alpha_{2}, H \rangle)^{2}) (\alpha_{1}+\alpha_{2})\\
&+m_{2}\langle \tau_{H}, \alpha_{1}+2\alpha_{2} \rangle (1-(\cot \langle 2\alpha_{2}, H \rangle)^{2}) (\alpha_{1}+2\alpha_{2})\\
=
&\frac{1}{2}\langle \tau_{H}, \alpha_{2} \rangle
\{ (2m_{1}+m_{2})(1-(\cot \langle \alpha_{2}, H \rangle)^{2}) \\
&+m_{2}(1-(\tan \langle \alpha_{2}, H \rangle )^{2}) +4m_{2}\}
(\alpha_{1}+2\alpha_{2}).
\end{align*}
Therefore, the orbit $K_{2}\cdot \pi_{1}(x)$ is biharmonic if and only if
$\tau_{H}=0$ or 
$$
(2m_{1}+m_{2})(1-(\cot \langle \alpha_{2}, H \rangle)^{2}) +m_{2}(1-(\tan \langle \alpha_{2}, H \rangle )^{2}) +4m_{2}=0
$$
holds. 
The last equation is equivalent to
$$
\big( (2m_{1}+m_{2}) (\cot \langle \alpha_{2}, H \rangle)^{2}-m_{2} \big) ((\cot \langle \alpha_{2}, H \rangle)^{2}-1 )=4m_{2}(\cot \langle \alpha_{2}, H \rangle)^{2}.
$$
Since $m_{2}>0$, the solutions of the equation are not harmonic.
Hence the orbit $K_{2}\cdot \pi_{1}(x)$ is proper biharmonic if and only if
$$
(\cot\langle \alpha_{2}, H\rangle )^{2}=\frac{m_{1}+3m_{2}\pm \sqrt{m_{1}^{2} + 4m_{1}m_{2}+8m_{2}^{2}}}{2m_{1}+m_{2}}.
$$
{\em In this case, there exist exactly
two proper biharmonic orbits.}
%  which are regular orbits of
% the $K_{2}$-action on $N_{1}$ and the $K_{1}$-action on $N_{2}$.

\vspace{12pt}
(3) When $H \in P_{0}^{\{ \alpha_{1}, \alpha_{2}\} }=\{ tH_{\alpha_{1}}+(1-t)H_{\alpha_{2}} \mid 0<t<1\}$, 
we have 
$\Sigma^{+}_{H}=\{e_{1}+e_{2}\}$ and 
$\langle \alpha_{2}, H\rangle=(\pi/2)-\langle \alpha_{1}, H\rangle /2 $.
By Theorem \ref{Theorem3.2}, 
we have 
\begin{align*}
\tau_{H}=
&-m_{2}\cot \langle \alpha_{1}, H \rangle \alpha_{1}
-m_{1}\cot \langle \alpha_{2}, H \rangle \alpha_{2}
-m_{1}\cot \langle \alpha_{1}+\alpha_{2}, H \rangle (\alpha_{1}+\alpha_{2})\\
=
&\frac{1}{2}\left\{ -m_{2}\cot \left( \frac{\langle \alpha_{1}, H \rangle}{2}\right) 
+(2m_{1}+m_{2})\tan \left( \frac{\langle \alpha_{1}, H \rangle}{2}\right) \right\}\alpha_{1}.
\end{align*}
Hence, $\tau_{H}=0$ if and only if 
$$
\left( \cot \frac{\langle \alpha_{1}, H \rangle}{2}\right)^{2} =\frac{2m_{1}+m_{2}}{m_{2}}.
$$
By Theorem \ref{thm: Bih. orbits of commutative Hermann actions}, the orbit $K_{2}\cdot \pi_{1}(x)$ is biharmonic if and only if 
\begin{align*}
0=
&m_{2}\langle \tau_{H}, \alpha_{1} \rangle (1-(\cot \langle \alpha_{1}, H \rangle)^{2}) \alpha_{1}
+m_{1}\langle \tau_{H},\alpha_{2} \rangle (1-(\cot \langle \alpha_{2}, H \rangle)^{2}) \alpha_{2}\\
&+m_{1}\langle \tau_{H}, \alpha_{1}+\alpha_{2} \rangle (1-(\cot \langle \alpha_{1}+\alpha_{2}, H \rangle)^{2}) (\alpha_{1}+\alpha_{2})\\
=&
\frac{1}{4}\langle \tau_{H}, \alpha_{1} \rangle
\left\{
4m_{2}+m_{2}\left( 1-\left( \cot \frac{\langle \alpha_{1}, H \rangle }{2}\right)^{2}\right) \right.\\
&\left. \qquad + (2m_{1}+m_{2})\left( 1-\left( \tan \frac{\langle \alpha_{1}, H \rangle }{2}\right)^{2}\right)
\right\}
\alpha_{1}.
\end{align*}
Therefore, the orbit $K_{2}\cdot \pi_{1}(x)$ is biharmonic if and only if
$\tau_{H}=0$ or 
$$
4m_{2}+m_{2}\left( 1-\left( \cot \frac{\langle \alpha_{1}, H \rangle }{2}\right)^{2}\right) 
+ (2m_{1}+m_{2})\left( 1-\left( \tan \frac{\langle \alpha_{1}, H \rangle }{2}\right)^{2}\right)=0
$$
holds. 
The last equation is equivalent to
$$
\left( m_{2}\left( \cot \frac{\langle \alpha_{1}, H \rangle }{2}\right)^{2} - (2m_{1}+m_{2})\right)
\left( \left( \cot \frac{\langle \alpha_{1}, H \rangle }{2}\right)^{2}-1\right)
=4m_{2}\left( \cot \frac{\langle \alpha_{1}, H \rangle }{2}\right)^{2}.
$$
Since $m_{2}>0$, the solutions of the equation are not harmonic.
Hence the orbit $K_{2}\cdot \pi_{1}(x)$ is proper biharmonic if and only if
$$
\left( \cot \frac{\langle \alpha_{1}, H \rangle }{2}\right)^{2}=
\frac{m_{1}+3m_{2} \pm \sqrt{m_{1}^{2}+4m_{1}m_{2}+8m_{2}^{2} } }{m_{2}}
$$
holds.
{\em In this case, there exist exactly
two proper biharmonic orbits.} 
% which are regular orbits of
% the $K_{2}$-action on $N_{1}$ and 
% the $K_{1}$-action on $N_{2}$.

%%%%%%%%%%%%%%%%%%%%%%%%%%%%%%%%%%%%%%%%%%%%%%%%%%%%%%%%%%%%%%%
\subsection{Type $\rm{BC}_{2}$}
%%%%%%%%%%%%%%%%%%%%%%%%%%%%%%%%%%%%%%%%%%%%%%%%%%%%%%%%%%%%%%%
We set
\begin{align*}
\Sigma^{+}=&\{\alpha_{1}=e_{1}-e_{2} , \alpha_{2}=e_{2}, \alpha_{1}+\alpha_{2}, \alpha_{1}+2\alpha_{2}, 2\alpha_{2}, 2\alpha_{1}+2\alpha_{2} \},\\
W^{+}=&\emptyset, \
\delta =2\alpha_{1}+2\alpha_{2},
\end{align*}
and 
$$
m_{1}=m(e_{1}),\ m_{2}=m(e_{1}-e_{2}),\ m_{3}=m(2e_{1}).
$$

\vspace{12pt}
(1)\ When $H \in P_{0}^{\{ \alpha_{1}, \delta\} }=\{tH_{\alpha_{1}} \mid 0<t<1\}$, we have 
$\Sigma^{+}_{H}=\{e_{2}, 2e_{2}\}$.
By Theorem \ref{Theorem3.2}, 
we have 
\begin{align*}
\tau_{H}=
\{ -(m_{1}+2m_{2} +m_{3})\cot \langle \alpha_{1}, H \rangle + m_{3} \tan \langle \alpha_{1}, H \rangle \}(\alpha_{1}+\alpha_{2}).
\end{align*}
Hence, $\tau_{H}=0$ if and only if 
$$
(\cot \langle \alpha_{1}, H \rangle)^{2} =\frac{m_{3}}{m_{1}+2m_{2}+m_{3}}
$$
holds.
By Theorem \ref{thm: Bih. orbits of commutative Hermann actions}, $K_{2}\cdot \pi_{1}(x)$ is biharmonic if and only if 
\begin{align*}
0=
&\langle \tau_{H}, \alpha_{1} \rangle 
\{
(2m_{2}+m_{1}+m_{3}) (1-(\cot \langle \alpha_{1}, H \rangle)^{2}) \\
& +m_{3}(1-(\tan \langle \alpha_{1}, H \rangle)^{2})
+4m_{3} 
\}
(\alpha_{1}+\alpha_{2}).
\end{align*}
Therefore, the orbit $K_{2}\cdot \pi_{1}(x)$ is biharmonic if and only if
$\tau_{H}=0$ or 
$$
(2m_{2}+m_{1}+m_{3}) (1-(\cot \langle \alpha_{1}, H \rangle)^{2}) 
+m_{3}(1-(\tan \langle \alpha_{1}, H \rangle)^{2})
+4m_{3}
=0
$$
holds.
The last equation is equivalent to
$$
\big( (2m_{2}+m_{1}+m_{3})(\cot \langle \alpha_{1}, H \rangle)^{2}-m_{3}\big) ((\cot \langle \alpha_{1}, H \rangle)^{2}-1)=4m_{3}(\cot \langle \alpha_{1}, H \rangle)^{2}.
$$
Since $m_{3}>0$, the solutions of the equation are not harmonic.
Hence the orbit $K_{2}\cdot \pi_{1}(x)$ is proper biharmonic if and only if
\begin{align*}
&(\cot\langle \alpha_{1}, H\rangle )^{2}\\
=&\frac{m_{1}+2m_{2}+6m_{3}\pm \sqrt{(m_{1}+2m_{2}+6m_{3})-4(m_{1}+2m_{2}+m_{3})m_{3}  }}{m_{1}+2m_{2}+m_{3}}.
\end{align*}
{\em In this case, there exist exactly
two proper biharmonic orbits.}
%  which are regular orbits of
% the $K_{2}$-action on $N_{1}$ and the $K_{1}$-action on $N_{2}$.

\vspace{12pt}
(2) When $H \in P_{0}^{\{ \alpha_{2}, \delta\} }=\{tH_{\alpha_{2}} \mid 0<t<1\}$, we have 
$\Sigma^{+}_{H}=\{e_{1}-e_{2}\}$.
By Theorem \ref{Theorem3.2}, 
we have 
\begin{align*}
\tau_{H}=
-\frac{1}{2}\{ (2m_{1}+m_{2}+2m_{3})\cot \langle \alpha_{2}, H \rangle -(m_{2}+2m_{3})\tan \langle \alpha_{2}, H \rangle\} (\alpha_{1}+2\alpha_{2}) .
\end{align*}
Hence, $\tau_{H}=0$ if and only if 
$$
(\cot\langle \alpha_{2}, H \rangle)^{2} =\frac{m_{2}+2m_{3}}{2m_{1}+m_{2}+2m_{3}}.
$$
By Theorem \ref{thm: Bih. orbits of commutative Hermann actions}, the orbit $K_{2}\cdot \pi_{1}(x)$ is biharmonic if and only if 
\begin{align*}
0=
&\frac{1}{2} \langle \tau_{H}, \alpha_{2} \rangle
\{ (2m_{1}+m_{2}+2m_{3})(1-(\cot \langle \alpha_{2}, H \rangle)^{2}) \\
&+(m_{2}+2m_{3})(1-(\tan \langle \alpha_{2}, H \rangle )^{2}) +4(m_{2}+2m_{3})\}
(\alpha_{1}+2\alpha_{2}).
\end{align*}
Therefore, the orbit $K_{2}\cdot \pi_{1}(x)$ is biharmonic if and only if
$\tau_{H}=0$ or 
\begin{align*}
&(2m_{1}+m_{2}+2m_{3})(1-(\cot \langle \alpha_{2}, H \rangle)^{2}) \\
+&(m_{2}+2m_{3})(1-(\tan \langle \alpha_{2}, H \rangle )^{2}) +4(m_{2}+2m_{3})=0
\end{align*}
holds. 
The last equation is equivalent to
\begin{align*}
&\big( (2m_{1}+m_{2}+2m_{3}) (\cot \langle \alpha_{2}, H \rangle)^{2}-(m_{2}+2m_{3}) \big) ((\cot \langle \alpha_{2}, H \rangle)^{2}-1 )\\
=&4(m_{2}+2m_{3})(\cot \langle \alpha_{2}, H \rangle)^{2}.
\end{align*}
Since $m_{2}+2m_{3}>0$, the solutions of the equation are not harmonic.
Hence the orbit $K_{2}\cdot \pi_{1}(x)$ is proper biharmonic if and only if
$$
(\cot\langle \alpha_{2}, H\rangle )^{2}=\frac{m_{1}+3(m_{2}+2m_{3})\pm 
\sqrt{ m_{1}^{2} + 4m_{1}(m_{2}+2m_{3})+8(m_{2}+2m_{3})^{2} } }{2m_{1}+m_{2}+2m_{3}}.
$$
{\em In this case, there exist exactly
two proper biharmonic orbits.}
%  which are regular orbits of
% the $K_{2}$-action on $N_{1}$ and the $K_{1}$-action on $N_{2}$.

\vspace{12pt}
(3) When $H \in P_{0}^{\{ \alpha_{1}, \alpha_{2}\} }=\{ tH_{\alpha_{1}}+(1-t)H_{\alpha_{2}} \mid 0<t<1\}$, 
 we have 
$\Sigma^{+}_{H}=\{2e_{1}\}$ and $\langle \alpha_{2}, H \rangle =(\pi/2)-\langle \alpha_{1}, H\rangle $.
By Theorem \ref{Theorem3.2}, 
we have 
\begin{align*}
\tau_{H}=
-(m_{1}+m_{3})\tan \langle \alpha_{1}, H \rangle \alpha_{2}
+(2m_{2}+m_{3})\cot \langle \alpha_{1}, H \rangle \alpha_{2}.
\end{align*}
Hence, $\tau_{H}=0$ if and only if 
$$
(\cot\langle \alpha_{1}, H \rangle)^{2} =\frac{m_{1}+m_{3}}{2m_{2}+m_{3}}.
$$
By Theorem \ref{thm: Bih. orbits of commutative Hermann actions}, the orbit $K_{2}\cdot \pi_{1}(x)$ is biharmonic if and only if 
%%%%%%%%%%%%%%%%%%%%%%
\begin{align*}
0=
&-\langle \tau_{H}, \alpha_{1} \rangle
\{
 (m_{3}+2m_{2})(1-(\cot \langle \alpha_{1}, H \rangle)^{2})\\
& +(m_{1}+m_{3})(1-(\tan \langle \alpha_{1}, H \rangle)^{2})
+4m_{3}
\} \alpha_{2}.
\end{align*}
Therefore, the orbit $K_{2}\cdot \pi_{1}(x)$ is biharmonic if and only if
$\tau_{H}=0$ or 
$$
(m_{3}+2m_{2})(1-(\cot \langle \alpha_{1}, H \rangle)^{2})
+(m_{1}+m_{3})(1-(\tan \langle \alpha_{1}, H \rangle)^{2})
+4m_{3}=0
$$
holds. 
The last equation is equivalent to
$$
\big( (m_{3}+2m_{2}) (\cot \langle \alpha_{1}, H \rangle)^{2} -(m_{1}+m_{3})\big) ( (\cot \langle \alpha_{1}, H \rangle)^{2}-1)=4m_{3}(\cot \langle \alpha_{1}, H \rangle)^{2}.
$$
Since $m_{3}>0$, the solutions of the equation are not harmonic.
Hence the orbit $K_{2}\cdot \pi_{1}(x)$ is proper biharmonic if and only if
$$
(\cot \langle \alpha_{1}, H \rangle)^{2}=
\frac{m_{1}+2m_{2}+6m_{3} \pm \sqrt{ (m_{1}-2m_{2})^{2}+8m_{3}(m_{1}+2m_{2}+4m_{3})} }{2(2m_{2}+m_{3})}
$$
holds.
{\em In this case, there exist exactly
two proper biharmonic orbits.}
%  which are regular orbits of
% the $K_{2}$-action on $N_{1}$ and the $K_{1}$-action on $N_{2}$.

%%%%%%%%%%%%%%%%%%%%%%%%%%%%%%%%%%%%%%%%%%%%%%%%%%%%%%%%%%%%%%%
\subsection{Type $\rm{G}_{2}$}
%%%%%%%%%%%%%%%%%%%%%%%%%%%%%%%%%%%%%%%%%%%%%%%%%%%%%%%%%%%%%%%
We set
\begin{align*}
\Sigma^{+}&=\{\alpha_{1}, \alpha_{2}, \alpha_{1}+\alpha_{2}, 2\alpha_{1}+\alpha_{2}, 3\alpha_{1}+\alpha_{2}, 3\alpha_{1}+2\alpha_{2} \},\
W^{+}=\emptyset, \\
&\langle \alpha_{1}, \alpha_{1} \rangle=1, \ \langle \alpha_{1}, \alpha_{2} \rangle =-\frac{3}{2}, \ \langle \alpha_{2}, \alpha_{2} \rangle=3,\\
\delta &=3\alpha_{1}+2\alpha_{2},
\end{align*}
and 
$$
m=m(\alpha_{1} )=m(\alpha_{2}).
$$

\vspace{12pt}
(1)\ When $H \in P_{0}^{\{ \alpha_{1}, \delta\} }=\{tH_{\alpha_{1}} \mid 0<t<1\}$, 
we have $\Sigma^{+}_{H}=\{\alpha_{2}\}, W^{+}_{H}=\emptyset$.
By Theorem \ref{Theorem3.2}, 
we have 
\begin{align*}
\tau_{H}=
&-m\left\{
\cot\langle \alpha_{1}, H\rangle 
+\cot\langle 2\alpha_{1}, H\rangle
+3 \frac{\cot \langle \alpha_{1}, H\rangle \cot \langle 2\alpha_{1}, H\rangle -1}{\cot \langle \alpha_{1}, H\rangle +\cot \langle 2\alpha_{1}, H\rangle}
\right\}(2\alpha_{1}+\alpha_{2}).
\end{align*}
Thus, $\tau_{H}=0$ if and only if
\begin{align*}
0=&\left\{
\cot\langle \alpha_{1}, H\rangle 
+\cot\langle 2\alpha_{1}, H\rangle
+3 \frac{\cot \langle \alpha_{1}, H\rangle \cot \langle 2\alpha_{1}, H\rangle -1}{\cot \langle \alpha_{1}, H\rangle +\cot \langle 2\alpha_{1}, H\rangle}
\right\}\\
=&\frac{1}{4}
\big(
 15(\cot \langle \alpha_{1}, H\rangle)^{2}-24+ (\tan \langle \alpha_{1}, H\rangle)^{2}
\big).
\end{align*}
% Since
% \begin{align*}
% &\cot\langle \alpha_{1}, H\rangle 
% +\cot\langle 2\alpha_{1}, H\rangle
% +3 \frac{\cot \langle \alpha_{1}, H\rangle \cot \langle 2\alpha_{1}, H\rangle -1}{\cot \langle \alpha_{1}, H\rangle +\cot \langle 2\alpha_{1}, H\rangle}\\
% =&
% \frac{1}{4}
% [
%  15(\cot \langle \alpha_{1}, H\rangle)^{2}-24+ (\tan \langle \alpha_{1}, H\rangle)^{2}
% ]\\
% \end{align*}
% The equation is equivalent to 
% $$
%  15(\cot \langle \alpha_{1}, H\rangle)^{4}-24(\cot \langle \alpha_{1}, H\rangle)+ 1=0.
% $$
Since 
$0<\langle \alpha_{1}, H \rangle <(\pi/3)$, $\tau_{H}=0$ if and only if 
$$
(\cot \langle \alpha_{1} , H \rangle)^{2}= \frac{12+\sqrt{129}}{15}. 
$$
By Theorem \ref{thm: Bih. orbits of commutative Hermann actions}, the orbit $K_{2}\cdot \pi_{1}(x)$ is biharmonic if and only if 
\begin{align*}
0=&
m\langle \tau_{H}, \alpha_{1} \rangle\{
 (1- (\cot \langle \alpha_{1}, H\rangle )^{2})\\
& +2(1- (\cot \langle 2\alpha_{1}, H\rangle)^{2} ) 
+9(1- (\cot \langle 3\alpha_{1}, H\rangle)^{2} ) 
\}(2\alpha_{1}+\alpha_{2}).
\end{align*}
Then, we have
\begin{align*}
& (1- (\cot \langle \alpha_{1}, H\rangle )^{2})
+2(1- (\cot \langle 2\alpha_{1}, H\rangle)^{2} ) 
+9(1- (\cot \langle 3\alpha_{1}, H\rangle)^{2} ) \\
=&
12- 
\left[
(\cot \langle \alpha_{1}, H\rangle )^{2}
+2(\cot \langle 2\alpha_{1}, H\rangle )^{2}
+9\left(
\frac{ \cot \langle \alpha_{1}, H\rangle \cot \langle 2\alpha_{1}, H\rangle -1}{\cot \langle \alpha_{1}, H\rangle +\cot \langle 2\alpha_{1}, H\rangle} 
\right)^{2}
\right].
\end{align*}
Thus, 
the orbit $K_{2}\cdot \pi_{1}(x)$ is biharmonic if and only if 
\begin{align*}
0=
&\{ (\cot \langle \alpha_{1}, H\rangle )^{2}
+2(\cot \langle 2\alpha_{1}, H\rangle )^{2}\}
(\cot \langle \alpha_{1}, H\rangle +\cot \langle 2\alpha_{1}, H\rangle)^{2}\\
&+9(\cot \langle \alpha_{1}, H\rangle \cot \langle 2\alpha_{1}, H\rangle -1)^{2}
-12(\cot \langle \alpha_{1}, H\rangle +\cot \langle 2\alpha_{1}, H\rangle)^{2}\\
=&
\frac{(\tan \langle \alpha_{1}, H\rangle)^{4}}{8}
\{
45(\cot \langle \alpha_{1}, H\rangle)^{8}
-378(\cot \langle \alpha_{1}, H\rangle)^{6}\\
&+318(\cot \langle \alpha_{1}, H\rangle)^{4}
-30(\cot \langle \alpha_{1}, H\rangle)^{2}
+1
\}.
\end{align*}
We set $u=(\cot \langle \alpha_{1}, H\rangle)^{2}$ and
$$
f(u)=
45u^{4}
-378u^{3}
+318u^{2}
-30u
+1.
$$
Then, 
\begin{align*}
\frac{df}{du}(u)=&180u^{3}-1026u^{2}+636u-30=6(u-5)(30u^{2}-21u+1)\\
=&
180(u-5)\left( u- \frac{21+ \sqrt{321}}{60}\right)\left( u- \frac{21- \sqrt{321}}{60}\right).
\end{align*}
Since 
$$
f\left( \frac{1}{3}\right) = \frac{128}{9}  >0, \frac{df}{du}\left( \frac{1}{3}\right) =\frac{224}{3}>0, f(5)=-6824<0\  \text{and}\ f(7)=6112>0,
$$
the equation $f(u)=0$ has distinct two solutions for $(1/3)<u $.
Therefore, there exist $0<t_{-}, t_{+} <1$ such that 
the orbits $K_{2}\cdot \pi_{1}(\exp(t_{\pm}H_{\alpha_{1}}))$ are biharmonic.
Since 
$$
f\left( \frac{12+\sqrt{129}}{15}\right) \neq 0,
$$
the orbits $K_{2}\cdot \pi_{1}(\exp(t_{\pm}H_{\alpha_{1}}))$ are proper biharmonic.
{\em In this case, there exist exactly
two proper biharmonic orbits.}
%  which are regular orbits of
% the $K_{2}$-action on $N_{1}$ and the $K_{1}$-action on $N_{2}$.

\vspace{12pt}
(2)\ When $H \in P_{0}^{\{ \alpha_{2}, \delta\} }=\{tH_{\alpha_{2}} \mid 0<t<1\}$, 
we have $\Sigma^{+}_{H}=\{\alpha_{1}\}, W^{+}_{H}=\emptyset$.
By Theorem \ref{Theorem3.2}, 
we have 
\begin{align*}
\tau_{H}=
&-\frac{1}{2}m\{
5\cot \langle \alpha_{2}, H\rangle 
-\tan \langle \alpha_{2}, H\rangle
\} (3\alpha_{1}+2\alpha_{2}).
\end{align*}
Hence, 
$\tau_{H}=0$ if and only if 
$$
(\cot \langle \alpha_{2} , H \rangle)^{2}= \frac{1}{5}. 
$$
By Theorem \ref{thm: Bih. orbits of commutative Hermann actions}, the orbit $K_{2}\cdot \pi_{1}(x)$ is biharmonic if and only if 
\begin{align*}
0=
&\frac{1}{2}
m\langle \tau_{H}, \alpha_{2} \rangle 
\{
5(1- (\cot \langle \alpha_{2}, H\rangle )^{2}) 
+(1- (\tan \langle \alpha_{2}, H\rangle )^{2})
+4 
\}(3\alpha_{1}+2\alpha_{2}).
\end{align*}
Therefore, the orbit $K_{2}\cdot \pi_{1}(x)$ is biharmonic if and only if
$\tau_{H}=0$ or 
$$
5(1- (\cot \langle \alpha_{2}, H\rangle )^{2}) 
+(1- (\tan \langle \alpha_{2}, H\rangle )^{2})
+4 =0
$$
holds. 
The last equation is equivalent to
$$
\big( 5(\cot \langle \alpha_{2}, H \rangle)^{2} -1\big) ( (\cot \langle \alpha_{2}, H \rangle)^{2}-1)=4(\cot \langle \alpha_{2}, H \rangle)^{2}.
$$
Thus, the solutions of the equation are not harmonic.
Hence $K_{2}\cdot \pi_{1}(x)$ is proper biharmonic if and only if
$$
(\cot \langle \alpha_{2}, H \rangle )^{2}=
\frac{5 \pm 2\sqrt{ 5} }{5}
$$
holds.
{\em In this case, there exist exactly
two proper biharmonic orbits.}
%  which are regular orbits of
% the $K_{2}$-action on $N_{1}$ and the $K_{1}$-action on $N_{2}$.

%%%%%%%%%%%%%%%%%%%%%%%%%%%

\vspace{12pt}
(3)\ When $H \in P_{0}^{\{ \alpha_{1}, \alpha_{2}\} }=\{ tH_{\alpha_{1}}+(1-t)H_{\alpha_{2}} \mid 0<t<1\}$, 
we have $\Sigma^{+}_{H}=\{3\alpha_{1} +2\alpha_{2}\}, W^{+}_{H}=\emptyset$.
We set $\vartheta=(\pi/6)t$.
Then, 
$$
\langle \alpha_{1}, H \rangle =2\vartheta,\  \langle \alpha_{2}, H \rangle =\frac{\pi}{2}-3\vartheta.
$$ 
By Theorem \ref{Theorem3.2}, 
we have 
\begin{align*}
\tau_{H}=&
-m\{\cot (2\vartheta) -\tan \vartheta -\tan(3\vartheta) \}\alpha_{1}.
\end{align*}
Since 
$$
\tan (3\vartheta)=\frac{\cot \vartheta+\cot (2\vartheta)}{\cot \vartheta \cot (2\vartheta) -1},
$$
$\tau_{H}=0$ if and only if,
\begin{align*}
0=&
(\cot (2\vartheta) -\tan \vartheta ) 
(\cot \vartheta \cot (2\vartheta) -1)
-3(\cot \vartheta+\cot (2\vartheta))\\
=&
\frac{(\cot \vartheta )^{4} -24 (\cot \vartheta )^{2}+15}{\cot \vartheta}.
\end{align*}
Since $0<\vartheta <(\pi/6)$, $\cot \vartheta >\sqrt{3}$.
Hence $\tau_{H}=0$ if and only if,
$$
(\cot \vartheta)^{2}=12 +\sqrt{129}.
$$
By Theorem \ref{thm: Bih. orbits of commutative Hermann actions}, the orbit $K_{2}\cdot \pi_{1}(x)$ is biharmonic if and only if 
\begin{align*}
0=&
\frac{m}{2}\langle \tau_{H}, \alpha_{1} \rangle
\{
2(1-(\cot (2\vartheta) )^{2}) 
+ (1-(\tan  \vartheta )^{2}) 
+9(1-(\tan (3\vartheta) )^{2}) 
\}\alpha_{1}.
\end{align*}
Therefore, $K_{2}\cdot \pi_{1}(x)$ is biharmonic if and only if
$\tau_{H}=0$ or 
\begin{align*}
0=&2(1-(\cot (2\vartheta) )^{2}) 
+ (1-(\tan  \vartheta )^{2}) 
+9(1-(\tan (3\vartheta) )^{2})\\
=&
(12- 2(\cot (2\vartheta) )^{2}- (\tan \vartheta )^{2}) 
-9 \frac{(\cot (2\vartheta) +\cot \vartheta )^{2}}{((\cot (2\vartheta) )(\cot \vartheta )-1)^{2}}
\end{align*}
holds. 
Thus $K_{2}\cdot \pi_{1}(x)$ is biharmonic if and only if
$\tau_{H}=0$ or 
\begin{align*}
0=&\{12- 2(\cot (2\vartheta) )^{2}- (\tan \vartheta )^{2}\} ((\cot (2\vartheta) )(\cot \vartheta -1)^{2}
-9 (\cot (2\vartheta) +\cot \vartheta )^{2}\\
=&
-\frac{1}{8}(\tan \vartheta)^{2}\{(\cot \vartheta)^{8}-32(\cot \vartheta)^{6}+330(\cot \vartheta)^{4}-360(\cot \vartheta)^{2}+45\}.
\end{align*}

We set $u=(\cot \vartheta)^{2}$ and
$$
f(u)=
u^{4}
-32u^{3}
+330u^{2}
-360u
+45.
$$
Then, 
\begin{align*}
\frac{df}{du}(u)&=4(3u^{3}-24u^{2}+165u-90), \\
\frac{d^{2}f}{du^{2}}(u)&=12(u-5)(u-11).
\end{align*}
Since 
$$
f(3)=1152>0,\ \frac{df}{du}(3)=864>0,\ \frac{df}{du}(11)=608>0,
$$
$(df/du)(u)>0 $ and $f(u)>0$ for $3<u$.
Thus the equation $f(u)=0$ has no solution for $3<u $.
{\em Therefore,
if the orbits $K_{2}\cdot \pi_{1}(x)$ is biharmonic,
then it is harmonic.}
%%%%%%

%\vspace{12pt}
%\subsection{
\subsection*{Cases of $\theta_{1}\not\sim \theta_{2}$}
%}

Next, we consider the cases of $\theta_{1}\not\sim \theta_{2}$. 
Let $(G, K_{1}, K_{2})$ be a compact symmetric triad which satisfies the condition (A), (B) or (C) in Theorem \ref{Theorem:I2 AB}.
Then the triple $(\tilde{\Sigma}, \Sigma, W)$ is a symmetric triad of $\mathfrak{a}$ with multiplicities.
From (\ref{eq:cell_of_Hermann_action}) and (\ref{cell_decomposition_of_Hermann_action}), the orbit spaces of $K_{2}$-action on $N_{1}$ and $K_{1}$-action on $N_{2}$ are described as 
$\overline{P_{0}}=\{H \in \mathfrak{a} \mid \langle \alpha, H \rangle \geq 0, \langle \tilde{\alpha}, H \rangle \leq (\pi/2)\ (\alpha\in \Pi)\}$
where $\tilde{\alpha}$ is a unique element in $W^{+}$ which satisfies $\alpha + \lambda  \not\in W$ for all $\lambda \in \Pi$.
We set a basis $\{H_{\alpha}\}_{\alpha \in \Pi}$ of $\mathfrak{a}$ as follows;
$$
\langle H_{\alpha} , \beta \rangle=0, \quad
\langle H_{\alpha}, \tilde{\alpha}\rangle =\frac{\pi}{2}\quad   (\alpha \neq \beta, \alpha, \beta \in \Pi).  
$$
Then we have 
$$
P_{0}=\left\{\left. \sum_{\alpha \in \Pi} t_{\alpha} H_{\alpha}\  \right| \  t_{\alpha }>0 \ (\alpha \in \Pi), \sum_{\alpha \in \Pi} t_{\alpha}<1  \right\}.
$$
We apply Theorem \ref{thm: Bih. orbits of commutative Hermann actions} 
to the following three cases; 
\begin{enumerate}
\item $H \in P_{0}^{\{ \alpha_{1}, \tilde{\alpha}\} }=\{tH_{\alpha_{1}} \mid 0<t<1\}$,
\item $H \in P_{0}^{\{ \alpha_{2}, \tilde{\alpha}\} }=\{tH_{\alpha_{2}} \mid 0<t<1\}$,
\item $H \in P_{0}^{\{ \alpha_{1}, \alpha_{2}\} }=\{tH_{\alpha_{1}}+(1-t)H_{\alpha_{2}} \mid 0<t<1\}$.
\end{enumerate}
In the following, we solve the equation (\ref{bih.eq in thm 4.1}) in Theorem \ref{thm: Bih. orbits of commutative Hermann actions} 
for each symmetric triad with multiplicities which satisfies $\dim \mathfrak{a}=2$.
%%%%%%%%%%%%%%%%%%%%%%%%%%%%%%%%%%%%%%%%%%%%%%%%%%%%%%%%%%%%%%%%%%%%%%%%%%%%%%%%%
%%%%Let $\tilde{\alpha} \in \{\alpha \in W^{+} \mid \alpha + \lambda  \not\in W \ (\lambda \in \Pi)\}$.
%%%%%%%%%%%%%%%%%%%%%%%%%%%%%%%%%%%%%%%%%%%%%%%%%%%%%%%%%%%%%%%%%%%%%%%%%%%%%%
%%%%%%%%%%%%%%%%%%%%%%%%%%%%%%%%%%%%%%%%%%%%%%%%%%%%%%%%%%%%%%%%%%%%%%%%%%%%%%VVVV 
\subsection{Type $\rm{I\mathchar`-B}_{2}$ and $\rm{I\mathchar`-BC}_{2}\rm{\mathchar`-A}_{1}^{2}$}
We set 
\begin{align*}
\Sigma^{+}&=\{ e_{1}\pm e_{2}, e_{1}, e_{2}, 2e_{1}, 2e_{2}\},\ 
W^{+}=\{ e_{1}, e_{2} \}, \\
\Pi& =\{\alpha_{1}=e_{1}-e_{2}, \alpha_{2}= e_{2}\},\
\tilde{\alpha}=\alpha_{1}+\alpha_{2}=e_{1}
\end{align*}
and 
\begin{align*}
m_{1}=m(e_{1}),\ m_{2}=m(e_{1}+e_{2}),\ m_{3}=m(2e_{1}),\
n_{1}=n(e_{1}),
\end{align*}
where, if $(\tilde{\Sigma}, \Sigma, W)$ is type $\rm{I\mathchar`-B}_{2}$, 
then $m_{3}=0$.
\\
%%%%%%%%%%%%%%%%%%%%%%%%%%%%%%%%%%%%%

(1)\ When $H \in P_{0}^{\{ \alpha_{1}, \tilde{\alpha}\} }=\{tH_{\alpha_{1}} \mid 0<t<1\}$,
%When $H= t H_{\alpha_{1}} \ (0<t<1)$, 
we have $\Sigma^{+}_{H}=\{\alpha_{2}, 2\alpha_{2}\}$ and $W^{+}_{H}=\emptyset$.
By Theorem \ref{Theorem3.2}, 
we have 
\begin{align*}
\tau_{H}=&
\{
-(2m_{2} +m_{1}+m_{3}) \cot \langle \alpha_{1},H\rangle 
+(n_{1}+m_{3})\tan\langle \alpha_{1} , H \rangle 
\}e_{1}.
\end{align*}
Hence we have that 
$\tau_{H}=0$ if and only if 
$$
(\cot \langle \alpha_{1} , H \rangle)^{2}= \frac{n_{1}+m_{3}}{m_{1}+2m_{2}+m_{3}}. 
$$
By Theorem \ref{thm: Bih. orbits of commutative Hermann actions}, the orbit $K_{2}\cdot \pi_{1}(x)$ is biharmonic if and only if 
\begin{align*}
0=&\langle \tau_{H}, \alpha_{1} \rangle
\{
(m_{1}+2m_{2}+m_{3}) (1-(\cot \langle \alpha_{1}, H\rangle )^{2}) 
+4m_{3} \\
&\qquad + (n_{1}+m_{3}) (1-(\tan \langle \alpha_{1}, H\rangle )^{2})
\} e_{1}.
\end{align*}
Therefore, 
$K_{2}\cdot \pi_{1}(x)$ is biharmonic if and only if 
$\tau_{H}=0$ or 
\begin{align} 
&(m_{1}+2m_{2}+m_{3})(\cot \langle \alpha_{1}, H\rangle)^{4}\label{bih. eq 1-B_{2}} \\
-&\{ (m_{1}+2m_{2}+m_{3}) + (n_{1}+m_{3})+4m_{3}\} (\cot \langle \alpha_{1}, H\rangle)^{2}
+n_{1}+m_{3}=0 \nonumber
\end{align}
holds.
Let $H_{+}$ and $H_{-}$ denote the solutions of the biharmonic equation (\ref{bih. eq 1-B_{2}}) such that 
$(\cot \langle \alpha_{1} , H_{-} \rangle)^{2} \leq (\cot \langle \alpha_{1} , H_{+} \rangle)^{2}$.
Since 
$\tau_{H}=0$ if and only if 
$$
(\cot \langle \alpha_{1} , H \rangle)^{2}= \frac{n_{1}+m_{3}}{m_{1}+2m_{2}+m_{3}}, 
$$
$K_{2}\cdot \pi_{1}(x)$ is proper biharmonic if and only if 
\begin{align*}
&(\cot \langle \alpha_{1} , H \rangle)^{2}\\
=&\begin{cases}
\frac{-(m_{1}+2m_{2}+6m_{3}+n_{1}) \pm \sqrt{(m_{1}+2m_{2}+6m_{3}+n_{1})^{2}-4(m_{1}+2m_{2}+m_{3}) (n_{1}+m_{3})}}{2(m_{1}+2m_{2}+m_{3})}
& (m_{3}>0)\\
1 
& (m_{3}=0).
\end{cases}
\end{align*} 
Let $H_{0}$ be a vector in $\mathfrak{a}$ satisfying $\tau_{H_{0}}=0$ and $0<\langle \alpha_{1}, H_{0} \rangle <\pi/2$.
\begin{itemize}
\item
{\em If $m_{3}=0$, then 
there exists a unique proper biharmonic orbit.}
%  which are regular orbits of
% the $K_{2}$-action on $N_{1}$ and the $K_{1}$-action on $N_{2}$.

\item {\em If $m_{3}>0$, then 
$$
\langle \alpha_{1} ,H_{-} \rangle<\langle \alpha_{1} ,H_{0} \rangle < \langle \alpha_{1} ,H_{+} \rangle,
$$
hence there exist exactly
two proper biharmonic orbits.}
\end{itemize}
%%%%%%%%%%%%%%%%%%%%%%%%%%%%%%%%%%%%%

\vspace{12pt}
(2)\ When $H \in P_{0}^{\{ \alpha_{2}, \tilde{\alpha}\} }=\{tH_{\alpha_{2}} \mid 0<t<1\}$, 
we have $\Sigma^{+}_{H}=\{\alpha_{1} \}, W^{+}_{H}=\emptyset$.
By Theorem \ref{Theorem3.2}, 
we have 
\begin{align*}
\tau_{H}=&
\frac{1}{2}
\{
 -(2m_{1}+m_{2}+2m_{3})\cot \langle \alpha_{2}, H \rangle \\
&\quad +(2n_{1}+m_{2}+2m_{3})\tan \langle \alpha_{2}, H \rangle
\}
(\alpha_{1}+2\alpha_{2}).
\end{align*}
Hence, $\tau_{H}=0$ if and only if 
$$
(\cot \langle \alpha_{2} , H \rangle)^{2}= \frac{2n_{1}+m_{2}+2m_{3}}{2m_{1}+m_{2}+2m_{3}}. 
$$
By Theorem \ref{thm: Bih. orbits of commutative Hermann actions}, the orbit $K_{2}\cdot \pi_{1}(x)$ is biharmonic if and only if 
\begin{align*}
0=&
\langle \tau_{H}, \alpha_{2} \rangle  
\{
m_{1}(1-(\cot \langle \alpha_{2}, H\rangle )^{2}) 
 +(2m_{2}+4m_{3})(1-(\cot \langle 2\alpha_{2} , H\rangle )^{2}) \\
&+n_{1}(1-(\tan \langle \alpha_{2}, H\rangle )^{2})   
\}(\alpha_{1}+2\alpha_{2}).
\end{align*}
Therefore, 
$K_{2}\cdot \pi_{1}(x)$ is biharmonic if and only if 
$\tau_{H}=0$ or 
\begin{align*} 
0=&m_{1}(1-(\cot \langle \alpha_{2}, H\rangle )^{2}) 
 +(2m_{2}+4m_{3})(1-(\cot \langle 2\alpha_{2} , H\rangle )^{2}) \\
&+n_{1}(1-(\tan \langle \alpha_{2}, H\rangle )^{2})
\end{align*}
holds.
The last equation is equivalent to 
\begin{align*} 
&\big( (2m_{1}+m_{2}+2m_{3}) (\cot \langle \alpha_{2} , H\rangle )^{2}-(2n_{1}+m_{2}+2m_{3})\big) ((\cot \langle \alpha_{2} , H\rangle )^{2}-1)\\
=& (2m_{2}+4m_{3})(\cot \langle \alpha_{2} , H\rangle )^{2}.
\end{align*}
Since $2m_{2}+4m_{3}>0$, the solutions of the equation are not harmonic.
Hence the orbit $K_{2}\cdot \pi_{1}(x)$ is proper biharmonic if and only if
\begin{align*}
(\cot \langle \alpha_{2}, H \rangle )^{2}
% =&
% \frac{ m_{1}+n_{1}+ l \pm \sqrt{(m_{1}+n_{1}+ l)^{2} - (2n_{1}+m_{2}+2m_{3})(2m_{1}+m_{2}+2m_{3}) } }{2n_{1}+m_{2}+2m_{3}}
=
\frac{ l_{1}+l_{2}+2m_{2}+4m_{3}  \pm \sqrt{(l_{1}+l_{2}+2m_{2}+4m_{3})^{2} - 4l_{1}l_{2} } }{2l_{1}}
\end{align*}
holds, 
where $l_{1}=2m_{1}+m_{2}+2m_{3}, l_{2}=2n_{1}+m_{2}+2m_{3}$.
{\em In this case, there exist exactly
two proper biharmonic orbits.}
%%%%%%%%%%%%%%%%%%%%%%%%%%%%%%%%%%%%%

\vspace{12pt}
(3)\ When $H \in P_{0}^{\{ \alpha_{1}, \alpha_{2}\} }=\{tH_{\alpha_{1}}+(1-t)H_{\alpha_{2}} \mid 0<t<1\}$, 
we have $\Sigma^{+}_{H}=\{ 2\alpha_{2}+2\alpha_{2} \}, W^{+}_{H}=\{ \alpha_{1}+\alpha_{2}\}$.
We set $\vartheta =\langle \alpha_{1}, H \rangle $.
Then, $\langle \alpha_{2}, H \rangle=(\pi/2)-\vartheta$.
By Theorem \ref{Theorem3.2}, 
we have 
\begin{align*}
\tau_{H}=&
\{(2m_{2}+m_{3}+n_{1})\cot \vartheta - (m_{1}+m_{3})\tan \vartheta
\}\alpha_{2}.
\end{align*}
Hence, $\tau_{H}=0$ if and only if 
$$
(\cot \vartheta)^{2}= \frac{m_{1}+m_{3}}{2m_{2}+m_{3}+n_{1}}. 
$$
By Theorem \ref{thm: Bih. orbits of commutative Hermann actions}, the orbit $K_{2}\cdot \pi_{1}(x)$ is biharmonic if and only if 
\begin{align*}
0=&
-\langle \tau_{H}, \alpha_{1} \rangle 
\{
 (2m_{2}+n_{1}+m_{3})(1- (\cot \vartheta)^{2} )\\
&+(m_{1}+m_{3})(1- (\tan \vartheta)^{2} )
+4m_{3}
\}\alpha_{2}.
\end{align*}
Therefore, 
$K_{2}\cdot \pi_{1}(x)$ is biharmonic if and only if 
$\tau_{H}=0$ or 
\begin{align*} 
0=\{
 (2m_{2}+n_{1}+m_{3})(1- (\cot \vartheta)^{2} )+(m_{1}+m_{3})(1- (\tan \vartheta)^{2} )
+4m_{3}
\}
\end{align*}
holds.
The last equation is equivalent to 
\begin{align*} 
\big( (2m_{2}+m_{3}+n_{1}) (\cot \vartheta )^{2}-(m_{1}+m_{3})\big) ((\cot \vartheta )^{2}-1)
=   4m_{3}(\cot \vartheta )^{2}.
\end{align*}
Since $m_{3}>0$, the solutions of the equation are not harmonic.
Hence the orbit $K_{2}\cdot \pi_{1}(x)$ is proper biharmonic if and only if
\begin{align*}
(\cot \vartheta)^{2}
=
\frac{l_{1}+l_{2}+4m_{3}\pm \sqrt{(l_{1}+l_{2}+4m_{3})^{2} -4l_{1}l_{2}} }{2l_{1}}
\end{align*}
holds,
where $l_{1}=2m_{2}+m_{3}+n_{1}, l_{2}=m_{1}+m_{3}$. 
{\em In this case, there exist exactly
two proper biharmonic orbits.}
\\
%%%%%%%%%%%%%%%%%%%%%%%%%%%%%%%%%%%%%
%%%%%%%%%%%%%%%%%%%%%%%%%%%%%%%%%%%%%
\subsection{Type $\rm{I\mathchar`-C}_{2}$}
%%%%%%%%%%%%%%%%%%%%%%%%%%%%%%%%%%%%%%%%
%%%%%%%%%%%%%%%%%%%%%%%%%%%%%%%%%%%%%%%%
We set 
\begin{align*}
\Sigma^{+}&=\{ e_{1}\pm e_{2}, 2e_{1}, 2e_{2}\},\ 
W^{+}=\{ e_{1}-e_{2}, e_{1}+e_{2} \}, \\
\Pi& =\{\alpha_{1}=e_{1}-e_{2}, \alpha_{2}= 2e_{2}\},\ 
\tilde{\alpha}=\alpha_{1}+\alpha_{2} =e_{1}+e_{2},
\end{align*}
and $m_{1}=m(e_{1}+e_{2}), m_{2}=m(2e_{1}), n_{1}=n(e_{1}+e_{2})$.
{\em In this case, we have the same result as cases of Type $\rm{I\mathchar`-B}_{2}$.}
\\
%%%%%%%%%%%%%%%%%%%%%%%%%%%%%%%%%%%%%%%%%%%%%%%%%%
%%%%%%%%%%%%%%%%%%%%%%%%%%%%%%%%%%%%%%%%%%%%%%%%%% 
\subsection{Type $\rm{I\mathchar`-BC}_{2}\rm{\mathchar`-B}_{2}$}
%%%%%%%%%%%%%%%%%%%%%%%%%%%%%%%%%%%%%%%%%%%%%%%%%%
%%%%%%%%%%%%%%%%%%%%%%%%%%%%%%%%%%%%%%%%%%%%%%%%%% 
We set 
\begin{align*}
\Sigma^{+}&=\{ e_{1}\pm e_{2}, e_{1}, e_{2}, 2e_{1}, 2e_{2}\},\ 
W^{+}=\{ e_{1}\pm e_{2}, e_{1}, e_{2}\}, \\
\Pi& =\{\alpha_{1}=e_{1}-e_{2}, \alpha_{2}= e_{2}\},\
\tilde{\alpha}=\alpha_{1}+2\alpha_{2}=e_{1}+e_{2}
\end{align*}
and 
\begin{align*}
m_{1}=m(e_{1}),\ m_{2}=m(e_{1}+e_{2}),\ m_{3}=m(2e_{1}),\
n_{1}=n(e_{1}),\ n_{2}=n(e_{1}+e_{2}).
\end{align*}
Since 
$e_{1}\in \Sigma \cap W,\ e_{1}-e_{2} \in W$ and 
$2\langle e_{1}, e_{1}-e_{2} \rangle / \langle e_{1}-e_{2}, e_{1}-e_{2} \rangle$ is odd, 
by definition of multiplicities, we have 
$m_{1}=m(e_{1})=n(e_{1})=n_{1}$.
\\
%%%%%%%%%%%%%%%%%%%%%%%%%%%%%%%%%%%%%

(1)\ When $H \in P_{0}^{\{ \alpha_{1}, \tilde{\alpha}\} }=\{tH_{\alpha_{1}} \mid 0<t<1\}$,
%(1)\ When $H= t H_{\alpha_{1}} \ (0<t<1)$, 
we have $\Sigma^{+}_{H}=\{\alpha_{2}, 2\alpha_{2}\}, W^{+}_{H}=\emptyset$.
By Theorem \ref{Theorem3.2}, 
we have 
\begin{align*}
\tau_{H}=&
\{
-(m_{1}+2m_{2}+m_{3})\cot \langle \alpha_{1}, H\rangle + (m_{1}+2n_{2}+m_{3})\tan \langle \alpha_{1}, H\rangle 
\} e_{1}.
\end{align*}
Hence we have 
$\tau_{H}=0$ if and only if 
$$
(\cot \langle \alpha_{1} , H \rangle)^{2}=\frac{m_{1}+2n_{2}+m_{3}}{m_{1}+2m_{2}+m_{3}}.
$$
By Theorem \ref{thm: Bih. orbits of commutative Hermann actions}, the orbit $K_{2}\cdot \pi_{1}(x)$ is biharmonic if and only if 
\begin{align*}
0=&
\langle \tau_{H}, \alpha_{1} \rangle  
\{
  (m_{1}+2m_{2}+m_{3})(1-(\cot \langle \alpha_{1}, H\rangle )^{2}) \\
&+(m_{1}+2n_{2}+m_{3})(1-(\tan \langle \alpha_{1}, H\rangle )^{2}) 
+4m_{3} 
\}(\alpha_{1}+\alpha_{2}).
\end{align*}
Therefore, 
$K_{2}\cdot \pi_{1}(x)$ is biharmonic if and only if 
$\tau_{H}=0$ or 
\begin{align*} 
0=&  (m_{1}+2m_{2}+m_{3})(1-(\cot \langle \alpha_{1}, H\rangle )^{2}) \\
&+(m_{1}+2n_{2}+m_{3})(1-(\tan \langle \alpha_{1}, H\rangle )^{2}) 
+4m_{3} 
\end{align*}
holds.
The last equation is equivalent to 
\begin{align*} 
&\big( (m_{1}+2m_{2}+m_{3}) (\cot \langle \alpha_{2} , H\rangle )^{2}-(m_{1}+2n_{2}+m_{3})\big) ((\cot \langle \alpha_{2} , H\rangle )^{2}-1)\\
=& 4m_{3}(\cot \langle \alpha_{2} , H\rangle )^{2}.
\end{align*}
Since $m_{3}>0$, the solutions of the equation are not harmonic.
Hence the orbit $K_{2}\cdot \pi_{1}(x)$ is proper biharmonic if and only if
\begin{align*}
(\cot \langle \alpha_{1}, H \rangle)^{2}
=&
\frac{l_{1}+l_{2}+4m_{3} \pm \sqrt{(l_{1}+l_{2}+4m_{3})^{2}- 4l_{1}l_{2}  }  }{2l_{1}}
\end{align*}
holds,
where 
$l_{1}=m_{1}+2m_{2}+m_{3}, l_{2}=m_{1}+2n_{2}+m_{3}$.
{\em In this case, there exist exactly
two proper biharmonic orbits.}

\vspace{12pt}
(2)\ When $H \in P_{0}^{\{ \alpha_{2}, \tilde{\alpha}\} }=\{tH_{\alpha_{2}} \mid 0<t<1\}$,
we have $\Sigma^{+}_{H}=\{\alpha_{1}\}, W^{+}_{H}=\emptyset$.
By Theorem \ref{Theorem3.2}, 
we have 
\begin{align*}
\tau_{H}
=&
\{
-(2m_{1}+m_{2}+2m_{3})\cot\langle 2\alpha_{2}, H \rangle
+n_{2}\tan\langle 2\alpha_{2}, H \rangle
\}(\alpha_{1}+2\alpha_{2}).
\end{align*}
Hence we have 
$\tau_{H}=0$ if and only if 
$$
(\cot \langle 2\alpha_{2} , H \rangle)^{2}
=\frac{n_{2}}{2m_{1}+m_{2}+2m_{3}}.
$$
By Theorem \ref{thm: Bih. orbits of commutative Hermann actions}, the orbit $K_{2}\cdot \pi_{1}(x)$ is biharmonic if and only if 
\begin{align*}
0=&
2\langle \tau_{H}, \alpha_{2} \rangle  
\{
  (2m_{1}+m_{2}+2m_{3})(1-(\cot \langle 2\alpha_{2}, H\rangle )^{2}) \\
& +n_{2}(1-(\tan \langle 2\alpha_{2}, H\rangle )^{2}) 
-4m_{3} 
\}(\alpha_{1}+ 2\alpha_{2}).
\end{align*}
Therefore, 
$K_{2}\cdot \pi_{1}(x)$ is biharmonic if and only if 
$\tau_{H}=0$ or 
\begin{align*} 
0=&(2m_{1}+m_{2}+2m_{3})(1-(\cot \langle 2\alpha_{2}, H\rangle )^{2}) \\
& +n_{2}(1-(\tan \langle 2\alpha_{2}, H\rangle )^{2}) 
-4m_{3}  
\end{align*}
holds.
The last equation is equivalent to 
\begin{align*} 
&\big( (2m_{1}+m_{2}+2m_{3}) (\cot \langle 2\alpha_{2} , H\rangle )^{2}-2n_{2}\big) ((\cot \langle \alpha_{2} , H\rangle )^{2}-1)\\
=&  -4m_{3}(\cot \langle \alpha_{2} , H\rangle )^{2}.
\end{align*}
Since $m_{3}>0$, the solutions of the equation are not harmonic.
\begin{itemize}
\item 
{\em When 
$
(-2m_{1}+m_{2}+2m_{3}+n_{2})^{2}-4(2m_{1}+m_{2}+2m_{3})n_{2}>0
$
the orbit $K_{2}\cdot \pi_{1}(x)$ is proper biharmonic if and only if
\begin{align*}
(\cot \langle 2\alpha_{2}, H \rangle)^{2}
=
\frac{l \pm \sqrt{l^{2}-4(2m_{1}+m_{2}+2m_{3})n_{2} }  }{2(2m_{1}+m_{2}+2m_{3})}
\end{align*}
holds, 
where $l=-2m_{1}+m_{2}+2m_{3}+n_{2}$.
In this case, there exist exactly
two proper biharmonic orbits.}

\item {\em When $(-2m_{1}+m_{2}+2m_{3}+n_{2})^{2}-4(2m_{1}+m_{2}+2m_{3})n_{2}<0$,
if the orbit $K_{2}\cdot \pi_{1}(x)$ is biharmonic, then it is harmonic.}

\item {\em When $(-2m_{1}+m_{2}+2m_{3}+n_{2})^{2}-4(2m_{1}+m_{2}+2m_{3})n_{2}=0$,
there exists unique proper biharmonic orbit.}
\end{itemize}

\vspace{12pt}
(3)\ When $H \in P_{0}^{\{ \alpha_{1}, \alpha_{2}\} }=\{tH_{\alpha_{1}}+(1-t)H_{\alpha_{2}} \mid 0<t<1\}$, 
we have $\Sigma^{+}_{H}=\emptyset, W^{+}_{H}=\{\alpha_{1}+2\alpha_{2}\}$.
We set $2\vartheta=\langle \alpha_{1}, H\rangle $.
Then $\langle \alpha_{2}, H\rangle =(\pi/4)- \vartheta$.
By Theorem \ref{Theorem3.2}, 
we have 
\begin{align*}
\tau_{H}=&
\{
-m_{2}\cot (2\vartheta )
+(2m_{1}+m_{3}+n_{2})\tan (2\vartheta)
\}\alpha_{1}.
\end{align*}
Hence we have 
$\tau_{H}=0$ if and only if 
$$
(\cot (2\vartheta))^{2}
=\frac{2m_{1}+2m_{3}+n_{2}}{m_{2}}.
$$
By Theorem \ref{thm: Bih. orbits of commutative Hermann actions}, the orbit $K_{2}\cdot \pi_{1}(x)$ is biharmonic if and only if 
\begin{align*}
0=&
2\langle \tau_{H}, \alpha_{1}+\alpha_{2} \rangle  
\{
  m_{2} (1-(\cot (2\vartheta) )^{2})\\
&+(2m_{1}+2m_{3}+n_{2})(1-(\tan (2\vartheta) )^{2}) -2m_{1}
\}\alpha_{1}.
\end{align*}
Therefore, 
$K_{2}\cdot \pi_{1}(x)$ is biharmonic if and only if 
$\tau_{H}=0$ or 
\begin{align*} 
0=&(m_{2} (1-(\cot (2\vartheta) )^{2})
+(2m_{1}+2m_{3}+n_{2})(1-(\tan (2\vartheta) )^{2}) -2m_{1}
\end{align*}
holds.
The last equation is equivalent to 
\begin{align*} 
\{ m_{2}(\cot (2\vartheta) )^{2}- (2m_{1}+2m_{3}+n_{2})\} 
((\cot (2\vartheta) )^{2}-1)=-2m_{1}(\cot (2\vartheta) )^{2}.
\end{align*}
Since $m_{1}>0$, the solutions of the equation are not harmonic.
\begin{itemize}
\item {\em When 
$
(2m_{3}+m_{2}+m_{2})^{2}-4m_{2}(2m_{1}+2m_{3}+n_{2})>0,
$
the orbit $K_{2}\cdot \pi_{1}(x)$ is proper biharmonic if and only if
\begin{align*}
(\cot (2\vartheta ))^{2}
=
\frac{2m_{3}+m_{2}+m_{2} \pm \sqrt{(2m_{3}+m_{2}+m_{2})^{2}-4m_{2}(2m_{1}+2m_{3}+n_{2}) }  }{2m_{2}}
\end{align*}
holds.
In this case, there exist exactly
two proper biharmonic orbits.}
\item {\em When $(2m_{3}+m_{2}+m_{2})^{2}-4m_{2}(2m_{1}+2m_{3}+n_{2})<0$, 
if the orbit $K_{2}\cdot \pi_{1}(x)$ is biharmonic, then it is harmonic.}

\item {\em When $(2m_{3}+m_{2}+m_{2})^{2}-4m_{2}(2m_{1}+2m_{3}+n_{2})=0$,
there exists unique proper biharmonic orbit.}
\end{itemize}
%%%%%%%%%%%%%%%%%%%%%%%%%%%%%%%%%%%%%%%%%%%%%%%%%%
%%%%%%%%%%%%%%%%%%%%%%%%%%%%%%%%%%%%%%%%%%%%%%%%%% 
\subsection{Type $\rm{II\mathchar`-BC}_{2}$} 
%%%
%%%
%%%
%%%
%%%
%{Type $\rm{II\mathchar`-BC}_{2}$}line 5003 
We set 
\begin{align*}
\Sigma^{+}&=\{ e_{1}\pm e_{2}, e_{1}, e_{2}\}, \
W^{+}=\{ e_{1}\pm e_{2}, e_{1}, e_{2}, 2e_{1}, 2e_{2}\}, \\
\Pi& =\{\alpha_{1}=e_{1}-e_{2}, \alpha_{2}= e_{2}\},\
\tilde{\alpha}=2\alpha_{1}+2\alpha_{2}=2e_{1}
\end{align*}
and 
\begin{align*}
m_{1}=m(e_{1}),\ m_{2}=m(e_{1}+e_{2}),
n_{1}=n(e_{1}),\ n_{2}=n(e_{1}+e_{2}), \ n_{3}=n(2e_{1}).
\end{align*}
Since 
$e_{1}, e_{1}+e_{2} \in \Sigma \cap W,\ 2e_{1}\in W$ and 
$2\langle e_{1}, 2e_{1}\rangle / \langle 2e_{1}, 2e_{1}\rangle=1$ and 
$2\langle e_{1}+e_{2}, 2e_{1}\rangle / \langle 2e_{1}, 2e_{1}\rangle=1$ are odd, 
by definition of multiplicities, we have 
$m_{1}=m(e_{1})=n(e_{1})=n_{1}$, 
$m_{2}=m(e_{1}+e_{2})=n(e_{1}+e_{2})=n_{2}$.
\\
%%%%%%%%%%%%%%%%%%%%%%%%%%%%%%%%%%%%%

(1)\ When $H \in P_{0}^{\{ \alpha_{1}, \tilde{\alpha}\} }=\{tH_{\alpha_{1}} \mid 0<t<1\}$,
%(1)\ When $H= t H_{\alpha_{1}} \ (0<t<1)$, 
we have $\Sigma^{+}_{H}=\{\alpha_{2}\}, W^{+}_{H}=\emptyset$.
By Theorem \ref{Theorem3.2}, 
we have 
\begin{align*}
\tau_{H}=&
2\{
-(m_{1}+2m_{2})\cot \langle 2\alpha_{1}, H\rangle + n_{3}\tan \langle 2\alpha_{1}, H\rangle 
\} e_{1}.
\end{align*}
Hence we have 
$\tau_{H}=0$ if and only if 
$$
(\cot \langle 2\alpha_{1} , H \rangle)^{2}=\frac{n_{3}}{m_{1}+2m_{2}}.
$$
By Theorem \ref{thm: Bih. orbits of commutative Hermann actions}, the orbit $K_{2}\cdot \pi_{1}(x)$ is biharmonic if and only if 
\begin{align*}
0=&
8\langle \tau_{H}, \alpha_{1} \rangle  
\{
  -(m_{1}+2m_{2})(\cot \langle 2\alpha_{1}, H\rangle )^{2} +n_{3}(1-(\tan \langle 2\alpha_{1}, H\rangle )^{2}) 
\}(\alpha_{1}+\alpha_{2}).
\end{align*}
Therefore, 
$K_{2}\cdot \pi_{1}(x)$ is biharmonic if and only if 
$\tau_{H}=0$ or 
\begin{align*} 
0=&  -(m_{1}+2m_{2})(\cot \langle 2\alpha_{1}, H\rangle )^{2} +n_{3}(1-(\tan \langle 2\alpha_{1}, H\rangle )^{2})  
\end{align*}
holds.
The last equation is equivalent to 
\begin{align*} 
&\{ (m_{1}+2m_{2}) (\cot \langle 2\alpha_{1} , H\rangle )^{2}-n_{3}\} ((\cot \langle 2\alpha_{1} , H\rangle )^{2}-1)\\
=&  -(m_{1}+2m_{2})(\cot \langle 2\alpha_{1} , H\rangle )^{2}.
\end{align*}
Since $m_{1}+2m_{2}>0$, the solutions of the equation are not harmonic.
\begin{itemize}
\item {\em When $n_{3}^{2}-4(m_{1}+2m_{2})n_{3}>0$, 
the orbit $K_{2}\cdot \pi_{1}(x)$ is proper biharmonic if and only if
\begin{align*}
(\cot \langle 2\alpha_{1}, H \rangle )^{2}
=&
\frac{n_{3} \pm \sqrt{n_{3}^{2}-4(m_{1}+2m_{2})n_{3} }  }{2(m_{1}+2m_{2})}
\end{align*}
holds.
In this case, there exist exactly
two proper biharmonic orbits.}

\item {\em When $n_{3}^{2}-4(m_{1}+2m_{2})n_{3}<0$,
if the orbit $K_{2}\cdot \pi_{1}(x)$ is biharmonic, then it is harmonic.}

\item {\em When $n_{3}^{2}-4(m_{1}+2m_{2})n_{3}=0$,
there exists unique proper biharmonic orbit.}
\end{itemize}
%%%

\vspace{12pt}
(2)\ When $H \in P_{0}^{\{ \alpha_{2}, \tilde{\alpha}\} }=\{tH_{\alpha_{2}} \mid 0<t<1\}$,
we have $\Sigma^{+}_{H}=\{\alpha_{1}\}, W^{+}_{H}=\emptyset$.
By Theorem \ref{Theorem3.2}, 
we have 
\begin{align*}
\tau_{H}=&
\{
-(2m_{1}+m_{2})\cot\langle 2\alpha_{2}, H \rangle
+(m_{2}+2n_{3})\tan\langle 2\alpha_{2}, H \rangle
\}(\alpha_{1}+2\alpha_{2}).
\end{align*}
Hence we have 
$\tau_{H}=0$ if and only if 
$$
(\cot \langle 2\alpha_{2} , H \rangle)^{2}
=\frac{m_{2}+2n_{3}}{2m_{1}+m_{2}}.
$$
By Theorem \ref{thm: Bih. orbits of commutative Hermann actions}, the orbit $K_{2}\cdot \pi_{1}(x)$ is biharmonic if and only if 
\begin{align*}
0=&
\langle \tau_{H}, \alpha_{2} \rangle  
\{
 -(4m_{1}+2m_{2})(\cot \langle 2\alpha_{2}, H\rangle )^{2}  \\
&  +(2m_{2}+4n_{3})(1-(\tan \langle 2\alpha_{2}, H\rangle )^{2})
-4m_{1} 
\}(\alpha_{1}+2\alpha_{2}).
\end{align*}
Therefore, 
$K_{2}\cdot \pi_{1}(x)$ is biharmonic if and only if 
$\tau_{H}=0$ or 
\begin{align*} 
0=&(2m_{1}+m_{2})(1-(\cot \langle 2\alpha_{2}, H\rangle )^{2}) \\
& +(m_{2}+2n_{3})(1-(\tan \langle 2\alpha_{2}, H\rangle )^{2})
-2m_{1}
\end{align*}
holds.
The last equation is equivalent to 
\begin{align*} 
&\big( (2m_{1}+m_{2}) (\cot \langle 2\alpha_{2} , H\rangle )^{2}-(m_{2}+2n_{3})\big) ((\cot \langle 2\alpha_{2} , H\rangle )^{2}-1)\\
=&  -2m_{1}(\cot \langle 2\alpha_{2} , H\rangle )^{2}.
\end{align*}
Since $2m_{1}>0$, the solutions of the equation are not harmonic.
\begin{itemize}
\item {\em When 
$
(m_{2}+n_{3})^{2}-(2m_{1}+m_{2})(m_{2}+2n_{3})>0
$
the orbit $K_{2}\cdot \pi_{1}(x)$ is proper biharmonic if and only if
\begin{align*}
(\cot \langle 2\alpha_{2}, H \rangle )^{2}
=
\frac{m_{2}+n_{3}  \pm \sqrt{(m_{2}+n_{3})^{2}-(2m_{1}+m_{2})(m_{2}+2n_{3}) }  }{(2m_{1}+m_{2})}
\end{align*}
holds.
In this case, there exist exactly
two proper biharmonic orbits.}
\item {\em When $(m_{2}+n_{3})^{2}-(2m_{1}+m_{2})(m_{2}+2n_{3})<0$,
if the orbit $K_{2}\cdot \pi_{1}(x)$ is biharmonic, then it is harmonic.}

\item {\em When $(m_{2}+n_{3})^{2}-(2m_{1}+m_{2})(m_{2}+2n_{3})=0$,
there exists unique proper biharmonic orbit.}
\end{itemize}
%%%%
%%%%

\vspace{12pt}
(3)\ When $H \in P_{0}^{\{ \alpha_{1}, \alpha_{2}\} }=\{tH_{\alpha_{1}}+(1-t)H_{\alpha_{2}} \mid 0<t<1\}$, 
we have $\Sigma^{+}_{H}=\emptyset, W^{+}_{H}=\{\tilde{\alpha}=2\alpha_{1}+2\alpha_{2}\}$.
We set $\vartheta=\langle 2\alpha_{1}, H\rangle $.
Then $\langle 2\alpha_{2}, H\rangle =(\pi/2)- \vartheta$.
By Theorem \ref{Theorem3.2}, 
we have 
\begin{align*}
\tau_{H}=&
2\{
(2m_{2}+n_{3})\cot \vartheta -m_{1}\tan \vartheta 
\}\alpha_{2}.
\end{align*}
Hence we have 
$\tau_{H}=0$ if and only if 
$$
(\cot \vartheta)^{2}
=\frac{m_{1}}{2m_{2}+n_{3}}.
$$
By Theorem \ref{thm: Bih. orbits of commutative Hermann actions}, the orbit $K_{2}\cdot \pi_{1}(x)$ is biharmonic if and only if 
\begin{align*}
0=&
4\langle \tau_{H}, \alpha_{1}\rangle
\{
(2m_{2}+n_{3})(1-(\cot \vartheta )^{2})
+m_{1}(1-(\tan \vartheta )^{2})
-(2m_{2}+m_{1})
\}\alpha_{2}
\end{align*}
Therefore, 
$K_{2}\cdot \pi_{1}(x)$ is biharmonic if and only if 
$\tau_{H}=0$ or 
\begin{align*} 
0=(2m_{2}+n_{3})(1-(\cot \vartheta )^{2})
+m_{1}(1-(\tan \vartheta )^{2})
-(2m_{2}+m_{1})
\end{align*}
holds.
The last equation is equivalent to 
\begin{align*} 
\{ (2m_{2}+n_{3})(\cot\vartheta )^{2}- m_{1}\} 
((\cot \vartheta )^{2}-1)=-(2m_{2}+m_{1})(\cot (2\vartheta) )^{2}.
\end{align*}
Since $2m_{2}+m_{1}>0$, the solutions of the equation are not harmonic.
\begin{itemize}
\item {\em When 
$
n_{3}^{2}-4(2m_{2}+n_{3})m_{1}>0,
$
the orbit $K_{2}\cdot \pi_{1}(x)$ is proper biharmonic if and only if
\begin{align*}
(\cot \vartheta)^{2}
=
\frac{n_{3} \pm \sqrt{n_{3}^{2}-4(2m_{2}+n_{3})m_{1} }  }{2(2m_{2}+n_{3})}
\end{align*}
holds.
In this case, there exist exactly
two proper biharmonic orbits.}

\item {\em When $n_{3}^{2}-4(2m_{2}+n_{3})m_{1}<0$,
if the orbit $K_{2}\cdot \pi_{1}(x)$ is biharmonic, then it is harmonic.}

\item {\em When $n_{3}^{2}-4(2m_{2}+n_{3})m_{1}=0$,
there exists unique proper biharmonic orbit.}
\end{itemize}
%%%
%%%
%%%
%%%
%%%

\vspace{12pt}
%{Type $\rm{II\mathchar`-BC}_{2}$}line 5003 
%%%%%%%%%%%%%%%%%%%%%%%%%%%%%%%%%%%%%%%%%%%%%%%%%%
%%%%%%%%%%%%%%%%%%%%%%%%%%%%%%%%%%%%%%%%%%%%%%%%%% 
%%%%%%%%%%%%%%%%%%%%%%%%%%%%%%%%%%%%%
%%%%%%%%%%%%%%%%%%%%%%%%%%%%%%%%%%%%%
\subsection{Type $\rm{III\mathchar`-A}_{2}$}
We set 
$$
\mathfrak{a}=\{x_{1}e_{1}+x_{2}e_{2}+x_{3}e_{3} \mid x_{i}\in \mathbb{R},\ x_{1}+x_{2}+x_{3}=0\},
$$
and 
\begin{align*}
&\Sigma^{+}=W^{+}=\{e_{1}-e_{2},\ e_{2}-e_{3},\ e_{1}-e_{3}\}, \\
&\Pi=\{\alpha_{1}=e_{1}-e_{2}, \alpha_{2}=e_{2}-e_{3}\},\ \tilde{\alpha}=\alpha_{1}+\alpha_{2},\\
&m:=m(\lambda )= n(\lambda ) \quad (\lambda \in \tilde{\Sigma} ).
\end{align*}

%%%%%%%%%%%%%%%%%%%%%%%%%%%%%%%%%%%%%
\vspace{12pt}
(1)\ When $H \in P_{0}^{\{ \alpha_{1}, \tilde{\alpha}\} }=\{tH_{\alpha_{1}} \mid 0<t<1\}$,
%(1)\ When $H= t H_{\alpha_{1}} \ (0<t<1)$,
we have $\Sigma^{+}_{H}=\{\alpha_{2} \}, W^{+}_{H}=\emptyset$.
By Theorem \ref{Theorem3.2},
we have
\begin{align*}
\tau_{H}
=&m\{-\cot \langle \alpha_{1} , H \rangle +\tan \langle \alpha_{1} , H \rangle \}(2\alpha_{1}+\alpha_{2}).
\end{align*} 
Hence we have that
$\tau_{H}=0$ if and only if $\langle \alpha_{1} , H \rangle=\pi/4$.
By Theorem \ref{thm: Bih. orbits of commutative Hermann actions}, the orbit $K_{2}\cdot \pi_{1}(x)$ is biharmonic if and only if 
\begin{align*}
0
=&-m\langle \tau_{H}, \alpha_{1} \rangle (\cot \langle \alpha_{1}, H\rangle -\tan \langle \alpha_{1}, H\rangle)^{2}(2\alpha_{1}+\alpha_{2}).
\end{align*}
Hence, the orbit $K_{2}\cdot \pi_{1}(x)$ is biharmonic if and only if 
$\langle \alpha_{1} , H \rangle=\pi/4$.
{\em Therefore, if the orbit $K_{2}\cdot \pi_{1}(x)$ is biharmonic, then it is harmonic.}

\vspace{12pt}
%%%%%%%%%%%%%%%%%%%%%%%%%%%%%%%%%%%%%

(2)\ When $H \in P_{0}^{\{ \alpha_{2}, \tilde{\alpha}\} }=\{tH_{\alpha_{2}} \mid 0<t<1\}$,
we have $\Sigma^{+}_{H}=\{\alpha_{1} \}, W^{+}_{H}=\emptyset$.
By the same calculation as (1), we have that 
the orbit $K_{2}\cdot \pi_{1}(x)$ is biharmonic if and only if 
$\langle \alpha_{2} , H \rangle=\pi/4$
and {\em if the orbit is biharmonic, then it is harmonic.}

\vspace{12pt}
%%%%%%%%%%%%%%%%%%%%%%%%%%%%%%%%%%%%%
(3)\ When $H \in P_{0}^{\{ \alpha_{1}, \alpha_{2}\} }=\{tH_{\alpha_{1}}+(1-t)H_{\alpha_{2}} \mid 0<t<1\}$, 
we have $\Sigma^{+}_{H}=\emptyset, W^{+}_{H}=\{ \alpha_{1}+\alpha_{2}\}$.
By the same calculation as (1), we have that 
the orbit $K_{2}\cdot \pi_{1}(x)$ is biharmonic if and only if 
$\langle \alpha_{1} , H \rangle=\pi/4$.
{\em If the orbit is biharmonic, then it is harmonic.}
%%%%%%%%%%%%%%%%%%%%%%%%%%%%%%%%%%%%%
%%%%%%%%%%%%%%%%%%%%%%%%%%%%%%%%%%%%%
\subsection{Type $\rm{III\mathchar`-B}_{2}$ and $\rm{III\mathchar`-C}_{2}$}
We set 
\begin{align*}
\Sigma^{+}&=\{ e_{1}\pm e_{2}, e_{1}, e_{2}\}, \
W^{+}=\{ e_{1}\pm e_{2}, e_{1}, e_{2}\}, \\
\Pi& =\{\alpha_{1}=e_{1}-e_{2}, \alpha_{2}= e_{2}\},\
\tilde{\alpha}=\alpha_{1}+2\alpha_{2}=e_{1}+e_{2}
\end{align*}
and 
\begin{align*}
m_{1}=m(e_{1}),\ m_{2}=m(e_{1}+e_{2}),\
n_{1}=n(e_{1}),\ n_{2}=n(e_{1}+e_{2}).
\end{align*}
Since 
$e_{1}\in \Sigma \cap W,\ e_{1}+e_{2}\in W$ and 
$2\langle e_{1}, e_{1}+e_{2}\rangle / \langle e_{1}+e_{2}, e_{1}+e_{2}\rangle=1$ is odd, 
by definition of multiplicities, we have 
$m_{1}=m(e_{1})=n(e_{1})=n_{1}$.

%%%%%%%%%%%%%%%%%%%%%%%%%%%%%%%%%%%%%

\vspace{12pt}
(1)\ When $H \in P_{0}^{\{ \alpha_{1}, \tilde{\alpha}\} }=\{tH_{\alpha_{1}} \mid 0<t<1\}$,
%(1)\ When $H= t H_{\alpha_{1}} \ (0<t<1)$, 
we have $\Sigma^{+}_{H}=\{\alpha_{2}\}, W^{+}_{H}=\emptyset$.
By Theorem \ref{Theorem3.2}, 
we have 
\begin{align*}
\tau_{H}=&
-(2m_{2}+m_{1}) \cot\langle \alpha_{1}, H \rangle (\alpha_{1}+\alpha_{2})+(2n_{2}+m_{1}) \tan\langle \alpha_{1}, H \rangle (\alpha_{1}+\alpha_{2}).
\end{align*}
Hence we have 
$\tau_{H}=0$ if and only if 
$$
(\cot \langle 2\alpha_{1} , H \rangle)^{2}=\frac{2m_{2}+m_{1}}{2n_{2}+m_{1}}.
$$
By Theorem \ref{thm: Bih. orbits of commutative Hermann actions}, the orbit $K_{2}\cdot \pi_{1}(x)$ is biharmonic if and only if 
\begin{align*}
0=&
\langle \tau_{H}, \alpha_{1} \rangle  
\{
(2m_{2}+m_{1})(1-(\cot \langle \alpha_{1}, H\rangle )^{2})\\
&+(2n_{2}+m_{1})(1-(\tan \langle \alpha_{1}, H\rangle )^{2})
\}(\alpha_{1}+2\alpha_{2}).
\end{align*}
Therefore, 
$K_{2}\cdot \pi_{1}(x)$ is biharmonic if and only if 
$\tau_{H}=0$ or 
\begin{align*} 
0=(2m_{2}+m_{1})(1-(\cot \langle \alpha_{1}, H\rangle )^{2})
+(2n_{2}+m_{1})(1-(\tan \langle \alpha_{1}, H\rangle )^{2})  
\end{align*}
holds.
The last equation is equivalent to 
\begin{align*} 
&\{ (2m_{2}+m_{1}) (\cot \langle \alpha_{1} , H\rangle )^{2}-(2n_{2}+m_{1})\} 
+((\cot \langle \alpha_{1} , H\rangle )^{2}-1)=0.
\end{align*}
\begin{itemize}
\item {\em When $m_{2}\neq n_{2}$, 
the orbit $K_{2}\cdot \pi_{1}(x)$ is proper biharmonic if and only if
\begin{align*}
(\cot \langle 2\alpha_{1}, H \rangle )^{2}
=1\qquad (\text{i.e.}\  \langle 2\alpha_{1}, H\rangle=(\pi/4) )
\end{align*}
holds.
In this case, there exists a unique proper biharmonic orbit.}
\item {\em When $m_{2}= n_{2}$, 
if the orbit $K_{2}\cdot \pi_{1}(x)$ is biharmonic, then it is harmonic.}
\end{itemize}
%%%

\vspace{12pt}
(2)\ When $H \in P_{0}^{\{ \alpha_{2}, \tilde{\alpha}\} }=\{tH_{\alpha_{2}} \mid 0<t<1\}$,
we have $\Sigma^{+}_{H}=\{\alpha_{1}\}, W^{+}_{H}=\emptyset$.
By Theorem \ref{Theorem3.2}, 
we have 
\begin{align*}
\tau_{H}=&
\{
-(2m_{1}+m_{2})\cot\langle 2\alpha_{2}, H \rangle
+n_{2}\tan\langle 2\alpha_{2}, H \rangle
\}(\alpha_{1}+2\alpha_{2}).
\end{align*}
Hence we have 
$\tau_{H}=0$ if and only if 
$$
(\cot \langle 2\alpha_{2} , H \rangle)^{2}
=\frac{n_{2}}{2m_{1}+m_{2}}.
$$
By Theorem \ref{thm: Bih. orbits of commutative Hermann actions}, the orbit $K_{2}\cdot \pi_{1}(x)$ is biharmonic if and only if 
\begin{align*}
0=&
\langle \tau_{H}, 2\alpha_{2} \rangle
\{
(2m_{1}+m_{2})(1-(\cot \langle 2\alpha_{2}, H\rangle )^{2})\\
&+n_{2}(1-(\tan \langle 2\alpha_{2}, H\rangle )^{2})-2m_{1}
\}(\alpha_{1}+2\alpha_{2}).
\end{align*}
Therefore, 
$K_{2}\cdot \pi_{1}(x)$ is biharmonic if and only if 
$\tau_{H}=0$ or 
\begin{align*} 
0=(2m_{1}+m_{2})(1-(\cot \langle 2\alpha_{2}, H\rangle )^{2})
+n_{2}(1-(\tan \langle 2\alpha_{2}, H\rangle )^{2})-2m_{1}  
\end{align*}
holds.
The last equation is equivalent to 
\begin{align*} 
\big( (2m_{1}+m_{2}) (\cot \langle 2\alpha_{2} , H\rangle )^{2}-n_{2}\big) ((\cot \langle 2\alpha_{2} , H\rangle )^{2}-1)
=  -2m_{1}(\cot \langle 2\alpha_{2} , H\rangle )^{2}.
\end{align*}
Since $2m_{1}>0$, the solutions of the equation are not harmonic.
\begin{itemize}
\item {\em When 
$
(m_{2}+n_{2})^{2}-4(2m_{1}+m_{2})n_{2}>0,
$
the orbit $K_{2}\cdot \pi_{1}(x)$ is proper biharmonic if and only if
\begin{align*}
(\cot  \langle 2\alpha_{2}, H \rangle )^{2}
=
\frac{m_{2}+n_{2}  \pm \sqrt{(m_{2}+n_{2})^{2}-4(2m_{1}+m_{2})n_{2} }  }{2(2m_{1}+m_{2})}
\end{align*}
holds.
In this case, there exist exactly
two proper biharmonic orbits.}

\item {\em When $(m_{2}+n_{2})^{2}-4(2m_{1}+m_{2})n_{2}<0$,
if the orbit $K_{2}\cdot \pi_{1}(x)$ is biharmonic, then it is harmonic.}

\item {\em When $(m_{2}+n_{2})^{2}-4(2m_{1}+m_{2})n_{2}=0$,
there exists unique proper biharmonic orbit.}
\end{itemize}
%%%%
%%%%

\vspace{12pt}
(3)\ When $H \in P_{0}^{\{ \alpha_{1}, \alpha_{2}\} }=\{tH_{\alpha_{1}}+(1-t)H_{\alpha_{2}} \mid 0<t<1\}$, 
we have $\Sigma^{+}_{H}=\emptyset, W^{+}_{H}=\{\tilde{\alpha}=\alpha_{1}+2\alpha_{2}\}$.
We set $\vartheta=\langle \alpha_{1}, H\rangle $.
Then $\langle 2\alpha_{2}, H\rangle =(\pi/2)- \vartheta$.
By Theorem \ref{Theorem3.2}, 
we have 
\begin{align*}
\tau_{H}=&
\{-m_{2}\cot (\vartheta) +(n_{2}+2m_{1})\tan (\vartheta)\}\alpha_{1}
.
\end{align*}
Hence we have 
$\tau_{H}=0$ if and only if 
$$
(\cot \vartheta)^{2}
=\frac{n_{2}+2m_{1}}{m_{2}}.
$$
By Theorem \ref{thm: Bih. orbits of commutative Hermann actions}, the orbit $K_{2}\cdot \pi_{1}(x)$ is biharmonic if and only if 
\begin{align*}
0=&
\langle \tau_{H}, \alpha_{1} \rangle
\{
m_{2}(1-(\cot \langle \alpha_{1}, H\rangle  )^{2})
+(n_{2}+2m_{1})(1-(\tan \langle \alpha_{1}, H\rangle  )^{2})
-2m_{1}
\}\alpha_{1}.
\end{align*}
Therefore, 
$K_{2}\cdot \pi_{1}(x)$ is biharmonic if and only if 
$\tau_{H}=0$ or 
\begin{align*} 
0=m_{2}(1-(\cot \langle \alpha_{1}, H\rangle  )^{2})
+(n_{2}+2m_{1})(1-(\tan \langle \alpha_{1}, H\rangle  )^{2})
-2m_{1}
\end{align*}
holds.
The last equation is equivalent to 
\begin{align*} 
\{ m_{2}(\cot\vartheta )^{2}- (n_{2}+2m_{1})\} 
((\cot (\vartheta) )^{2}-1)=-2m_{1}(\cot (\vartheta) )^{2}.
\end{align*}
Since $m_{1}>0$, the solutions of the equation are not harmonic.
\begin{itemize}
\item {\em When 
$
(m_{2}+n_{2})^{2}-4m_{2}(n_{2}+2m_{1})>0,
$
the orbit $K_{2}\cdot \pi_{1}(x)$ is proper biharmonic if and only if
\begin{align*}
(\cot \vartheta)^{2}
=
\frac{(m_{2}+n_{2}) \pm \sqrt{(m_{2}+n_{2})^{2}-4m_{2}(n_{2}+2m_{1}) }  }{2m_{2}}
\end{align*}
holds.
In this case, there exist exactly
two proper biharmonic orbits.}

\item {\em When $(m_{2}+n_{2})^{2}-4m_{2}(n_{2}+2m_{1})<0$,
if the orbit $K_{2}\cdot \pi_{1}(x)$ is biharmonic, then it is harmonic.}

\item {\em When $(m_{2}+n_{2})^{2}-4m_{2}(n_{2}+2m_{1})=0$,
there exists unique proper biharmonic orbit.}
\end{itemize}
%%%%%%%%%%%%%%%%%%%%%%%%%%%%%%%%%%%%%%%%%%%%%%%%%%%%%%%%%%%%%%%%%%%%%%%%%%%%%% 
%%%%%%%%%%%%%%%%%%%%%%%%%%%%%%%%%%%%%%%%%%%%%%%%%%%%%%%%%%%%%%%%%%%%%%%%%%%%%%
\subsection{Type $\rm{III\mathchar`-BC}_{2}$}
We set 
\begin{align*}
\Sigma^{+}&=W^{+}=\{ e_{1}\pm e_{2}, e_{1}, e_{2}, 2e_{1}, 2e_{2}\},\\  
\Pi& =\{\alpha_{1}=e_{1}-e_{2}, \alpha_{2}= e_{2}\},\
\tilde{\alpha}=2\alpha_{1}+2\alpha_{2}=2e_{1}
\end{align*} 
and 
\begin{align*}
m_{1}&=m(e_{1}),\ m_{2}=m(e_{1}+e_{2}),\ m_{3}=(2e_{1}),\\ 
n_{1}&=n(e_{1}),\ n_{2}=n(e_{1}+e_{2}),\ n_{3}=(2e_{1}).
\end{align*}
Since 
$e_{1}, e_{1}+e_{2} \in \Sigma \cap W,\ 2e_{1}\in W$ and 
$2\langle e_{1}, 2e_{1}\rangle / \langle 2e_{1}, 2e_{1}\rangle=1$ and
$2\langle e_{1}+e_{2}, 2e_{1}\rangle / \langle 2e_{1}, 2e_{1}\rangle=1$ 
 are odd, 
by definition of multiplicities, we have 
$m_{1}=m(e_{1})=n(e_{1})=n_{1}, m_{2}=m(e_{1}+e_{2})=n(e_{1}+e_{2})=n_{2}$.

%%%%%%%%%%%%%%%%%%%%%%%%%%%%%%%%%%%%%
\vspace{12pt}
(1)\ When $H \in P_{0}^{\{ \alpha_{1}, \tilde{\alpha}\} }=\{tH_{\alpha_{1}} \mid 0<t<1\}$,
we have $\Sigma^{+}_{H}=\{\alpha_{2}, 2\alpha_{2}\}, W^{+}_{H}=\emptyset$.
By Theorem \ref{Theorem3.2}, 
we have 
\begin{align*}
\tau_{H}=&
2\{ -(2m_{2}+m_{1}+m_{3})\cot\langle 2\alpha_{1}, H \rangle
+n_{3}\tan\langle 2\alpha_{1}, H \rangle
\}(\alpha_{1}+\alpha_{2}).
\end{align*}
Hence we have 
$\tau_{H}=0$ if and only if 
$$
(\cot \langle 2\alpha_{1} , H \rangle)^{2}=\frac{n_{3}}{m_{1}+2m_{2}+m_{3}}.
$$
By Theorem \ref{thm: Bih. orbits of commutative Hermann actions}, the orbit $K_{2}\cdot \pi_{1}(x)$ is biharmonic if and only if 
\begin{align*}
0=&
4\langle \tau_{H}, \alpha_{1} \rangle 
\{
(2m_{2}+m_{1}+m_{3})(1-(\cot \langle 2\alpha_{1} , H\rangle )^{2})\\
&+n_{3}(1-(\tan \langle 2\alpha_{1} , H\rangle )^{2})
-(2m_{2}+m_{1})
\}(\alpha_{1}+\alpha_{2})
.
\end{align*}
Therefore, 
$K_{2}\cdot \pi_{1}(x)$ is biharmonic if and only if 
$\tau_{H}=0$ or 
\begin{align*} 
0=(2m_{2}+m_{1}+m_{3})(1-(\cot \langle 2\alpha_{1} , H\rangle )^{2})
+n_{3}(1-(\tan \langle 2\alpha_{1}, H\rangle )^{2})
-(2m_{2}+m_{1})  
\end{align*}
holds.
The last equation is equivalent to 
\begin{align*} 
&\{ (2m_{2}+m_{1}+m_{3}) (\cot \langle 2\alpha_{1} , H\rangle )^{2}-n_{3}\} ((\cot \langle 2\alpha_{1} , H\rangle )^{2}-1)\\
=&-(2m_{2}+m_{1})(\cot \langle 2\alpha_{1} , H\rangle )^{2}.
\end{align*}
Since $(2m_{2}+m_{1})>0$, the solutions of the equation are not harmonic.
\begin{itemize}
\item {\em When $(m_{3}+n_{3})^{2}-4(2m_{2}+m_{1}+m_{3})n_{3}>0$, 
the orbit $K_{2}\cdot \pi_{1}(x)$ is proper biharmonic if and only if
\begin{align*}
(\cot  \langle 2\alpha_{1}, H \rangle )^{2}
=\frac{m_{3}+n_{3} \pm \sqrt{(m_{3}+n_{3})^{2}-4(2m_{2}+m_{1}+m_{3})n_{3}} }{2(2m_{2}+m_{1}+m_{3})}
\end{align*}
holds.
In this case, there exist exactly
two proper biharmonic orbits.}
\item {\em When $(m_{3}+n_{3})^{2}-4(2m_{2}+m_{1}+m_{3})n_{3}<0$,
if the orbit {$K_{2}\cdot \pi_{1}(x)$} is biharmonic, then it is harmonic.}

\item {\em When $(m_{3}+n_{3})^{2}-4(2m_{2}+m_{1}+m_{3})n_{3}=0$,
there exists unique proper biharmonic orbit.}
\end{itemize}
%%%

\vspace{12pt}
(2)\ When $H \in P_{0}^{\{ \alpha_{2}, \tilde{\alpha}\} }=\{tH_{\alpha_{2}} \mid 0<t<1\}$,
we have $\Sigma^{+}_{H}=\{\alpha_{1}\}, W^{+}_{H}=\emptyset$.
By Theorem \ref{Theorem3.2}, 
we have 
\begin{align*}
\tau_{H}=&
\{
-(2m_{1}+m_{2}+2m_{3})\cot\langle 2\alpha_{2}, H \rangle
+(m_{2}+2n_{2})\tan\langle 2\alpha_{2}, H \rangle
\}(\alpha_{1}+2\alpha_{2}).
\end{align*}
Hence we have 
$\tau_{H}=0$ if and only if 
$$
(\cot \langle 2\alpha_{2} , H \rangle)^{2}
=\frac{m_{2}+2n_{2}}{2m_{1}+m_{2}+2m_{3}}.
$$
By Theorem \ref{thm: Bih. orbits of commutative Hermann actions}, the orbit $K_{2}\cdot \pi_{1}(x)$ is biharmonic if and only if 
\begin{align*}
0=&
\langle \tau_{H}, 2\alpha_{2} \rangle
\{
 (2m_{1}+m_{2}+2m_{3})(1-(\cot \langle 2\alpha_{2}, H\rangle )^{2})\\
&+(n_{2}+2n_{3})(1-(\tan \langle 2\alpha_{2}, H\rangle )^{2})
-2m_{1}
\}(\alpha_{1}+2\alpha_{2}).
\end{align*}
Therefore, 
$K_{2}\cdot \pi_{1}(x)$ is biharmonic if and only if 
$\tau_{H}=0$ or 
\begin{align*} 
0=& (2m_{1}+m_{2}+2m_{3})(1-(\cot \langle 2\alpha_{2}, H\rangle )^{2})\\
&+(n_{2}+2n_{3})(1-(\tan \langle 2\alpha_{2}, H\rangle )^{2})
-2m_{1} 
\end{align*}
holds.
The last equation is equivalent to 
\begin{align*} 
&\big( (2m_{1}+m_{2}+2m_{3}) (\cot \langle 2\alpha_{2} , H\rangle )^{2}-(m_{2}+2n_{3})\big) ((\cot \langle 2\alpha_{2} , H\rangle )^{2}-1)\\
=&  -2m_{1}(\cot \langle 2\alpha_{2} , H\rangle )^{2}.
\end{align*}
Since $2m_{1}>0$, the solutions of the equation are not harmonic.
\begin{itemize}
\item {\em When 
$
(m_{2}+m_{3}+n_{3})^{2}-(2m_{1}+m_{2}+2m_{3})(m_{2}+2n_{2})>0
$
the orbit $K_{2}\cdot \pi_{1}(x)$ is proper biharmonic if and only if
\begin{align*}
&(\cot  \langle 2\alpha_{2}, H \rangle )^{2}\\
=&
\frac{m_{2}+m_{3}+n_{3}  \pm \sqrt{(m_{2}+m_{3}+n_{3})^{2}-(2m_{1}+m_{2}+2m_{3})(m_{2}+2n_{2}) }  }{2m_{1}+m_{2}+2m_{3}}
\end{align*}
holds.
In this case, there exist exactly
two proper biharmonic orbits.}

\item {\em When $(m_{2}+m_{3}+n_{3})^{2}-(2m_{1}+m_{2}+2m_{3})(m_{2}+2n_{2})<0$,
if the orbit {$K_{2}\cdot \pi_{1}(x)$} is biharmonic, then it is harmonic.}

\item {\em When $(m_{2}+m_{3}+n_{3})^{2}-(2m_{1}+m_{2}+2m_{3})(m_{2}+2n_{2})=0$,
there exists unique proper biharmonic orbit.}
\end{itemize}
%%%%
%%%%

\vspace{12pt}
(3)\ When $H \in P_{0}^{\{ \alpha_{1}, \alpha_{2}\} }=\{tH_{\alpha_{1}}+(1-t)H_{\alpha_{2}} \mid 0<t<1\}$, 
we have $\Sigma^{+}_{H}=\emptyset, W^{+}_{H}=\{\tilde{\alpha}=2\alpha_{1}+2\alpha_{2}=2e_{1}\}$.
We set $\vartheta=\langle 2\alpha_{1}, H\rangle $.
Then $\langle 2\alpha_{2}, H\rangle =(\pi/2)- \vartheta$.
By Theorem \ref{Theorem3.2}, 
we have 
\begin{align*}
\tau_{H}=&
\{
(4m_{2}+2n_{3})\cot \vartheta
-(2m_{1}+2m_{3})\tan((\pi/2)-\vartheta)
\}\alpha_{2}.
\end{align*}
Hence we have 
$\tau_{H}=0$ if and only if 
$$
(\cot \vartheta)^{2}
=\frac{m_{1}+m_{3}}{2m_{2}+n_{3}}.
$$
By Theorem \ref{thm: Bih. orbits of commutative Hermann actions}, the orbit $K_{2}\cdot \pi_{1}(x)$ is biharmonic if and only if 
\begin{align*}
0=&
4\langle \tau_{H}, \alpha_{2}\rangle
\{
(2m_{2}+n_{3}) (1-(\cot \vartheta  )^{2})\\ 
&+(m_{1}+m_{3}) (1-(\tan \vartheta )^{2})  
-(2m_{2}+m_{1})
\} \alpha_{2}
.
\end{align*}
Therefore, 
$K_{2}\cdot \pi_{1}(x)$ is biharmonic if and only if 
$\tau_{H}=0$ or 
\begin{align*} 
0=(2m_{2}+n_{3}) (1-(\cot \vartheta  )^{2})
+(m_{1}+m_{3}) (1-(\tan \vartheta )^{2})  
-(2m_{2}+m_{1})
\end{align*}
holds.
The last equation is equivalent to 
\begin{align*} 
\{ (2m_{2}+n_{3})(\cot\vartheta )^{2}- (m_{1}+m_{3})\} 
((\cot \vartheta )^{2}-1)=-(m_{1}+2m_{2})(\cot \vartheta )^{2}.
\end{align*}
Since $m_{1}+2m_{2}>0$, the solutions of the equation are not harmonic.
\begin{itemize}
\item {\em When 
$
(m_{3}+n_{3})^{2}-4(2m_{2}+n_{3})(m_{1}+m_{3})>0,
$
the orbit $K_{2}\cdot \pi_{1}(x)$ is proper biharmonic if and only if
\begin{align*}
(\cot \vartheta)^{2}
=
\frac{(m_{3}+n_{3}) \pm \sqrt{(m_{3}+n_{3})^{2}-4(2m_{2}+n_{3})(m_{1}+m_{3}) }  }{2(2m_{2}+n_{3})}
\end{align*}
holds.
In this case, there exist exactly
two proper biharmonic orbits.}

\item {\em When $(m_{3}+n_{3})^{2}-4(2m_{2}+n_{3})(m_{1}+m_{3})<0$,
if the orbit $K_{2}\cdot \pi_{1}(x)$ is biharmonic, then it is harmonic.}

\item {\em When $(m_{3}+n_{3})^{2}-4(2m_{2}+n_{3})(m_{1}+m_{3})=0$,
there exists unique proper biharmonic orbit.}\\
\end{itemize}
%%%%%%%%%%%%%%%%%%%%%%%%%%%%%%%%%%%%%%%%%%%%%%%%%%%%%%%%%%%%%%%%%%%%%%%%%%%%%% 
%%%%%%%%%%%%%%%%%%%%%%%%%%%%%%%%%%%%%%%%%%%%%%%%%%%%%%%%%%%%%%%%%%%%%%%%%%%%%%
%%%%%%%%%%%%%%%%%%%%%%%%%%%%%%%%%%%%%%%%%%%%%%%%%%%%%%%%%%%%%%%%%%%%%%%%%%%%%% 
%%%%%%%%%%%%%%%%%%%%%%%%%%%%%%%%%%%%%%%%%%%%%%%%%%%%%%%%%%%%%%%%%%%%%%%%%%%%%%
\subsection{Type $\rm{III\mathchar`-G}_{2}$}
We set
\begin{align*}
\Sigma^{+}&=W^{+}=
\{\alpha_{1}, \alpha_{2}, \alpha_{1}+\alpha_{2}, 2\alpha_{1}+\alpha_{2}, 3\alpha_{1}+\alpha_{2}, 3\alpha_{1}+2\alpha_{2} \},\\
&\langle \alpha_{1}, \alpha_{1} \rangle=1, \ \langle \alpha_{1}, \alpha_{2} \rangle =-\frac{3}{2}, \ \langle \alpha_{2}, \alpha_{2} \rangle=3,\
\tilde{\alpha}=3\alpha_{1}+2\alpha_{2},
\end{align*}
and 
$$
m_{1}=m(\alpha_{1} ),\ m_{2}=m(\alpha_{2}).
$$
(1)\ When $H \in P_{0}^{\{ \alpha_{1}, \tilde{\alpha}\} }=\{tH_{\alpha_{1}} \mid 0<t<1\}$, 
we have $\Sigma^{+}_{H}=\{ \alpha_{2}\}, W^{+}_{H}=\emptyset$.
By Theorem \ref{Theorem3.2}, 
we have 
\begin{align*}
\tau_{H}=&
2\big[
-m_{1}\{
\cot\langle 2\alpha_{1}, H \rangle
+\cot\langle 4\alpha_{1}, H \rangle
\}
-3m_{2}
\cot\langle 6\alpha_{1}, H \rangle 
\big] (2\alpha_{1}+\alpha_{2}).
\end{align*}
Hence we have 
$\tau_{H}=0$ if and only if 
$$
-m_{1}\{
\cot\langle 2\alpha_{1}, H \rangle
+\cot\langle 4\alpha_{1}, H \rangle
\}
-3m_{2}
\cot\langle 6\alpha_{1}, H \rangle=0.
$$
By Theorem \ref{thm: Bih. orbits of commutative Hermann actions}, the orbit $K_{2}\cdot \pi_{1}(x)$ is biharmonic if and only if 
\begin{align*}
0=&
-4\langle \tau_{H}, \alpha_{1}\rangle
\{
m_{1}(\cot\langle 2\alpha_{1}, H \rangle )^{2}
+2m_{1}(\cot\langle 4\alpha_{1}, H \rangle )^{2}\\
&+9m_{2}(\cot\langle 6\alpha_{1}, H \rangle )^{2}
\}(2\alpha_{1}+\alpha_{2}).
\end{align*}
Therefore, 
$K_{2}\cdot \pi_{1}(x)$ is biharmonic if and only if 
$\tau_{H}=0$ or 
\begin{align*} 
0=m_{1}(\cot\langle 2\alpha_{1}, H \rangle )^{2}
+2m_{1}(\cot\langle 4\alpha_{1}, H \rangle )^{2}
+9m_{2}(\cot\langle 6\alpha_{1}, H \rangle )^{2}  
\end{align*}
holds.
Clearly, 
\begin{align*} 
m_{1}(\cot\langle 2\alpha_{1}, H \rangle )^{2}
+2m_{1}(\cot\langle 4\alpha_{1}, H \rangle )^{2}
+9m_{2}(\cot\langle 6\alpha_{1}, H \rangle )^{2} >0 
\end{align*}
for $0<t<1$.
{\em Therefore, if the orbit $K_{2}\cdot \pi_{1}(x)$ is biharmonic, 
then it is harmonic.}\\

(2)\ When $H \in P_{0}^{\{ \alpha_{2}, \tilde{\alpha}\} }=\{tH_{\alpha_{2}} \mid 0<t<1\}$,
we have $\Sigma^{+}_{H}=\{ \alpha_{1}\}, W^{+}_{H}=\emptyset$.
By Theorem \ref{Theorem3.2}, 
we have 
\begin{align*}
\tau_{H}=&
-2\big[
(m_{1}+m_{2})
\cot\langle 2\alpha_{1}, H \rangle
+m_{2}
\cot\langle 4\alpha_{1}, H \rangle 
\big] (3\alpha_{1}+2\alpha_{2}).
\end{align*}
Hence we have 
$\tau_{H}=0$ if and only if 
$$
(m_{1}+m_{2})
\cot\langle 2\alpha_{1}, H \rangle
+m_{2}
\cot\langle 4\alpha_{1}, H \rangle 
=0.
$$
By Theorem \ref{thm: Bih. orbits of commutative Hermann actions}, the orbit $K_{2}\cdot \pi_{1}(x)$ is biharmonic if and only if 
\begin{align*}
0=&
-\langle \tau_{H}, \alpha_{2}\rangle
[
(m_{1}+m_{2})(\cot\langle \alpha_{2}, H \rangle -\tan\langle \alpha_{2}, H \rangle )^{2}\\
&+2m_{2}(\cot\langle 2\alpha_{1}, H \rangle -\tan\langle 2\alpha_{1}, H \rangle )^{2}
]
(2\alpha_{1}+\alpha_{2}).
\end{align*}
Therefore, 
$K_{2}\cdot \pi_{1}(x)$ is biharmonic if and only if 
$\tau_{H}=0$ or 
\begin{align*} 
0=(m_{1}+m_{2})(\cot\langle 2\alpha_{2}, H \rangle )^{2}
+2m_{2}(\cot\langle 4\alpha_{1}, H \rangle )^{2} 
\end{align*}
holds.
Clearly, 
\begin{align*} 
(m_{1}+m_{2})(\cot\langle 2\alpha_{2}, H \rangle )^{2}
+2m_{2}(\cot\langle 4\alpha_{1}, H \rangle )^{2} >0 
\end{align*}
for $0<t<1$.
{\em Therefore, if the orbit $K_{2}\cdot \pi_{1}(x)$ is biharmonic, 
then it is harmonic.}\\

(3)\ When $H \in P_{0}^{\{ \alpha_{1}, \alpha_{2}\} }=\{tH_{\alpha_{1}}+(1-t)H_{\alpha_{2}} \mid 0<t<1\}$, 
we have $\Sigma^{+}_{H}= \emptyset, W^{+}_{H}=\{3\alpha_{1}+2\alpha_{2}\}$.
We set $\vartheta=\langle \alpha_{1}, H\rangle $.
Then $\langle 2\alpha_{2}, H\rangle =(\pi/2)-3\vartheta$ 
and 
$0<\vartheta<(\pi/6)$.
By Theorem \ref{Theorem3.2}, 
we have 
\begin{align*}
\tau_{H}=&
-2\big\{
m_{1}\cot(2\vartheta)\alpha_{1}
-m_{2}\tan(3\vartheta)(3\alpha_{1})
-m_{1}\tan\vartheta \alpha_{1}
\big\}.
\end{align*}
Hence we have 
$\tau_{H}=0$ if and only if 
$$
m_{1}\cot(2\vartheta)
-3m_{2}\tan(3\vartheta)
-m_{1}\tan\vartheta=0.
$$
By Theorem \ref{thm: Bih. orbits of commutative Hermann actions}, the orbit $K_{2}\cdot \pi_{1}(x)$ is biharmonic if and only if 
\begin{align*}
0=&
-\langle \tau_{H}, \alpha_{1}\rangle 
\left\{
m_{1}(\cot (2\vartheta))^{2}
+ \frac{9}{2} m_{2}(\tan(3\vartheta))^{2} 
+ \frac{1}{2}m_{1}(\tan\vartheta)^{2}
\right\}\alpha_{1}.
\end{align*}
Therefore, 
$K_{2}\cdot \pi_{1}(x)$ is biharmonic if and only if 
$\tau_{H}=0$ or 
\begin{align*} 
0=m_{1}(\cot (2\vartheta))^{2}
+\frac{9}{2}m_{2}(\tan(3\vartheta))^{2} 
+\frac{1}{2}m_{1}(\tan\vartheta)^{2} 
\end{align*}
holds.
Clearly, 
\begin{align*} 
m_{1}(\cot (2\vartheta))^{2}
+\frac{9}{2}m_{2}(\tan(3\vartheta))^{2} 
+\frac{1}{2}m_{1}(\tan\vartheta)^{2}>0 
\end{align*}
for $0<t<1$.
{\em Therefore, if the orbit $K_{2}\cdot \pi_{1}(x)$ is biharmonic, 
then it is harmonic.}

\subsection{Tables of proper biharmonic orbits}
\label{Tables_of_proper_biharmonic_orbits_of_commutative_Hermann_actions}
By the above arguments, 
we classify all the proper biharmonic submanifolds which are singular orbits of commutative Hermann actions whose cohomogeneity is two.
The co-dimensions of these submanifolds are greater than two, since we consider singular orbits of cohomogeneity two actions. 
\begin{theorem}\label{Theorem:cohom2}
Let $(G, K_{1}, K_{2})$ be a compact symmetric triad
which satisfies $\theta_{1}=\theta_{2}$ or one of the conditions 
(A), (B) and (C) in Theorem \ref{Theorem:I2 AB}.
Assume that the $K_{2}$-action on $N_{1}=G/K_{1}$ is cohomogeneity two.
Then, 
all singular orbit types which are one parameter families in the orbit space are 
divided into one of the following three cases:
\begin{enumerate}
\item[(i)] There exists a unique proper biharmonic orbit.
\item[(ii)] There exist exactly two distinct proper biharmonic orbits. 
\item[(iii)] Any biharmonic orbit is harmonic.
\end{enumerate}
\end{theorem}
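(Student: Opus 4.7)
The plan is to prove Theorem~\ref{Theorem:cohom2} by a systematic case-by-case analysis parametrizing the one-parameter families of singular orbits and applying the biharmonic characterization in Theorem~\ref{thm: Bih. orbits of commutative Hermann actions}. Since $\dim\mathfrak{a}=2$, the closed cell $\overline{P_0}$ is a triangle in $\mathfrak{a}$, and by the cell decompositions (\ref{cell_decomposition_of_Hermann_action}) and (\ref{eq:cell_decomposition_of_isotropy_action}), singular orbits that vary in a one-parameter family correspond precisely to the relative interiors of the three edges of $\overline{P_0}$. Thus it suffices to analyze each edge of the form $P_0^{\{\alpha_i,\tilde\alpha\}}$ or $P_0^{\{\alpha_1,\alpha_2\}}$ for each admissible compact symmetric triad. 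The classification into admissible triads is exactly the list preceding the theorem (the rank-two irreducible symmetric pairs when $\theta_1=\theta_2$, and the types $\mathrm{I}\text{-}B_2$, $\mathrm{I}\text{-}C_2$, $\mathrm{I}\text{-}BC_2\text{-}A_1^2$, $\mathrm{I}\text{-}BC_2\text{-}B_2$, $\mathrm{II}\text{-}BC_2$, $\mathrm{III}\text{-}A_2,\dots,\mathrm{III}\text{-}G_2$ otherwise), so only finitely many root-type configurations need to be checked.

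For a fixed edge and fixed triad, I would parametrize $H=tH_{\alpha_i}$ (or $tH_{\alpha_1}+(1-t)H_{\alpha_2}$) with $0<t<1$, compute $\Sigma_H^+$ and $W_H^+$, and use Theorem~\ref{Theorem3.2} to write the tension field $\tau_H$ explicitly in the form $f(\vartheta)\,v$ for a single vector $v\in\mathfrak{a}$ and a trigonometric polynomial $f$ in the edge coordinate $\vartheta$. Substituting into the biharmonic equation of Theorem~\ref{thm: Bih. orbits of commutative Hermann actions}, the resulting vector equation again collapses onto the single direction $v$ and yields, after clearing denominators, a polynomial equation in $u=(\cot\vartheta)^2$ (or in $(\cot 2\vartheta)^2$), which in almost all non-$G_2$ cases is quadratic with coefficients depending only on the multiplicities $m(\lambda),n(\alpha)$. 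One then reads off the number of proper-biharmonic solutions in the admissible range by examining the discriminant and comparing with the harmonic locus (the zero set of $\tau_H$), exactly as carried out edge-by-edge in the preceding subsections.

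The conclusion in each case follows by sorting the outcomes into the trichotomy of the theorem: if the reduced quadratic factors so that the harmonic root drops out and the remaining quadratic has two distinct admissible roots, we are in case (ii); if the discriminant is zero (a tangency phenomenon that occurs in the $\mathrm{I}\text{-}BC_2\text{-}B_2$, $\mathrm{II}\text{-}BC_2$, $\mathrm{III}\text{-}B_2$/$C_2$ and $\mathrm{III}\text{-}BC_2$ edges for special multiplicity combinations), we are in case (i); if the discriminant is negative or the reduced polynomial is sign-definite on the admissible interval, we fall into case (iii). The $G_2$ and $\mathrm{III}\text{-}G_2$ edges produce polynomials of degree up to eight in $\cot\vartheta$, for which positivity/monotonicity on the admissible interval has to be checked separately.

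The main obstacle will be precisely these higher-degree $G_2$ type edges, where the quadratic reduction no longer applies and one has to argue that a quartic or octic in $u=(\cot\vartheta)^2$ either has the appropriate number of roots in the interval $(\sqrt{3},\infty)$ (or $((1/3),\infty)$, depending on the edge) or is sign-definite there; this will be handled by computing $df/du$ and $d^2f/du^2$, locating critical points via the factorizations already appearing in the excerpt (e.g.\ $df/du=180(u-5)\bigl(u-\tfrac{21+\sqrt{321}}{60}\bigr)\bigl(u-\tfrac{21-\sqrt{321}}{60}\bigr)$ in the $G_2$ case), and evaluating $f$ at a few explicit points to conclude. Once every edge of every triad in the list has been placed into (i), (ii) or (iii), the trichotomy of Theorem~\ref{Theorem:cohom2} is established.
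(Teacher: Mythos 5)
Your proposal matches the paper's own proof: Section~5 carries out exactly this edge-by-edge analysis over the listed triads, reducing the biharmonic equation of Theorem~\ref{thm: Bih. orbits of commutative Hermann actions} to a quadratic in $u=(\cot\vartheta)^2$ (with the discriminant trichotomy giving cases (i)--(iii)) and treating the higher-degree $G_2$-type polynomials by the same derivative and sign-evaluation arguments you describe. The approach and its execution are essentially identical to the paper's.
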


We list below all the results of the above computations.
In the following tables, 
the first column shows compact symmetric triads which induce Hermann actions;
the second column shows multiplicities of symmetric triads which are induced by compact symmetric triads in the first column; 
the third column shows a subset $\Delta$ in $\Pi \cup \{\delta \}$ or $\Pi \cup \{\tilde{\alpha} \}$ 
where $P_{0}^{\Delta}$ gives a singular orbit types which are one parameter families in the orbit space;
% the third column is a vector in $\mathfrak{a}$ which define the orbits $K_{2}\cdot \pi_{1}(x)$ in $N_{1}$ and $K_{1}\cdot \pi_{2}(x)$ in $N_{2}$,
the fourth column represents the result (i), (ii) or (iii) in Theorem \ref{Theorem:cohom2} for the orbit type $P_{0}^{\Delta}$;
the fifth column shows the co-dimension of orbits $K_{2}\cdot \pi_{1}(x)$ in $N_{1}$ and $K_{1}\cdot \pi_{2}(x)$ in $N_{2}$. \vspace{12pt} \\
\noindent{\bf Isotropy actions\ $(\theta_{1}=\theta_{2})$}

{\bf Type $\rm{A}_{2}$}\\
\begin{tabular}{|c|c|c|c|c|} 
\hline
$(G,K_{1})$                                            &$m(\alpha)$ & $\Delta $                                    &Theorem~\ref{Theorem:cohom2}&$\mathrm{codim}$ \\ \hline \hline
$(\mathrm{SU}(3), \mathrm{SO}(3))$                     & $1$        & $\{\alpha_{1}, \delta \}$                    &(ii)   &$3$          \\ \cline{3-5}
                                                       &            & $\{\alpha_{2}, \delta \}$                    &(ii)   &$3$          \\ \cline{3-5}
                                                       &            & $\{\alpha_{1}, \alpha_{2} \}$                &(ii)   &$3$          \\ \hline
$(\mathrm{SU}(3)\times \mathrm{SU}(3), \mathrm{SU}(3))$& $2$        & $\{\alpha_{1}, \delta \}$                    &(ii)   &$4$          \\ \cline{3-5}
                                                       &            & $\{\alpha_{2}, \delta \}$                    &(ii)   &$4$          \\ \cline{3-5}
                                                       &            & $\{\alpha_{1}, \alpha_{2} \}$                &(ii)   &$4$          \\ \hline
$(\mathrm{SU}(6), \mathrm{Sp}(3))$                     & $4$        & $\{\alpha_{1}, \delta \}$                    &(ii)   &$6$          \\ \cline{3-5}
                                                       &            & $\{\alpha_{2}, \delta \}$                    &(ii)   &$6$          \\ \cline{3-5}
                                                       &            & $\{\alpha_{1}, \alpha_{2} \}$                &(ii)   &$6$          \\ \hline
$(E_{6}, F_{4})$                                       & $8$        & $\{\alpha_{1}, \delta \}$                    &(ii)   &$10$         \\ \cline{3-5}
                                                       &            & $\{\alpha_{2}, \delta \}$                    &(ii)   &$10$         \\ \cline{3-5}
                                                       &            & $\{\alpha_{1}, \alpha_{2} \}$                &(ii)   &$10$         \\ \hline
\end{tabular}
\\[12pt]

{\bf Type $\rm{B}_{2}$}\\
\begin{tabular}{|c|c|c|c|c|} 
\hline
$(G,K_{1})$                                      &$(m_{1},m_{2})$& $\Delta              $                    &Theorem~\ref{Theorem:cohom2}&$\mathrm{codim}$ \\ \hline \hline
$(\mathrm{SO}(3)\times \mathrm{SO}(3), \mathrm{SO}(3))$& $(2,2)$       & $\{\alpha_{1}, \delta \}$                    &(ii)   &$4$          \\ \cline{3-5}
                                                       &               & $\{\alpha_{2}, \delta \}$                    &(ii)   &$4$          \\ \cline{3-5}
                                                       &               & $\{\alpha_{1}, \alpha_{2} \}$                &(ii)   &$4$          \\ \hline
$(\mathrm{SO}(4+n), \mathrm{SO}(2)\times \mathrm{SO}(2+n))$& $(n,1)$   & $\{\alpha_{1}, \delta \}$                    &(ii)   &$n+2$          \\ \cline{3-5}
                                                       &               & $\{\alpha_{2}, \delta \}$                    &(ii)   &$3 $      \\ \cline{3-5}
                                                       &               & $\{\alpha_{1}, \alpha_{2} \}$                &(ii)   &$3$       \\ \hline
\end{tabular}
\\[12pt]

{\bf Type $\rm{C}_{2}$}\\
\begin{tabular}{|c|c|c|c|c|} 
\hline
$(G,K_{1})$                                      &$(m_{1},m_{2})$      & $\Delta              $                    &Theorem~\ref{Theorem:cohom2}&$\mathrm{codim}$ \\ \hline \hline
$(\mathrm{Sp}(2), \mathrm{U}(2))$                      & $(1,1)$       & $\{\alpha_{1}, \delta \}$                    &(ii)   &$3$          \\ \cline{3-5}
                                                       &               & $\{\alpha_{2}, \delta \}$                    &(ii)   &$3$          \\ \cline{3-5}
                                                       &               & $\{\alpha_{1}, \alpha_{2} \}$                &(ii)   &$3$          \\ \hline
$(\mathrm{Sp}(2)\times \mathrm{Sp}(2), \mathrm{Sp}(2))$& $(2,2)$       & $\{\alpha_{1}, \delta \}$                    &(ii)   &$4$          \\ \cline{3-5}
                                                       &               & $\{\alpha_{2}, \delta \}$                    &(ii)   &$4$         \\ \cline{3-5}
                                                       &               & $\{\alpha_{1}, \alpha_{2} \}$                &(ii)   &$4$         \\ \hline
$(\mathrm{Sp}(4), \mathrm{Sp}(2)\times \mathrm{Sp}(2))$& $(4,3)$       & $\{\alpha_{1}, \delta \}$                    &(ii)   &$5$          \\ \cline{3-5}
                                                       &               & $\{\alpha_{2}, \delta \}$                    &(ii)   &$6$         \\ \cline{3-5}
                                                       &               & $\{\alpha_{1}, \alpha_{2} \}$                &(ii)   &$5$         \\ \hline
$(\mathrm{SU}(4), \mathrm{S}(\mathrm{U}(2)\times\mathrm{U}(2)))$& $(2,1)$& $\{\alpha_{1}, \delta \}$                  &(ii)   &$3$          \\ \cline{3-5}
                                                       &               & $\{\alpha_{2}, \delta \}$                    &(ii)   &$4$         \\ \cline{3-5}
                                                       &               & $\{\alpha_{1}, \alpha_{2} \}$                &(ii)   &$3$         \\ \hline
$(\mathrm{SO}(8), \mathrm{U}(4))$                      & $(4,1)$       & $\{\alpha_{1}, \delta \}$                    &(ii)   &$3$          \\ \cline{3-5}
                                                       &               & $\{\alpha_{2}, \delta \}$                    &(ii)   &$6$         \\ \cline{3-5}
                                                       &               & $\{\alpha_{1}, \alpha_{2} \}$                &(ii)   &$3$         \\ \hline
\end{tabular}
\\[12pt]

{\bf Type $\rm{BC}_{2}$}\\
\begin{tabular}{|c|c|c|c|c|} 
\hline
$(G,K_{1})$                                      &$(m_{1},m_{2}, m_{3})$& $\Delta$                          &Theorem~\ref{Theorem:cohom2}&$\mathrm{codim}$ \\ \hline \hline
$(\mathrm{SU}(4+n), \mathrm{S}(\mathrm{U}(2)\times \mathrm{U}(2+n)))$& $(2n,2,1)$& $\{\alpha_{1}, \delta \}$          &(ii)   &$5$          \\ \cline{3-5}
                                                       &               & $\{\alpha_{2}, \delta \}$                    &(ii)   &$2n+2$          \\ \cline{3-5}
                                                       &               & $\{\alpha_{1}, \alpha_{2} \}$                &(ii)   &$3$          \\ \hline
$(\mathrm{SO}(10), \mathrm{U}(5))$                     & $(4,4,1)$     & $\{\alpha_{1}, \delta \}$                    &(ii)   &$7$          \\ \cline{3-5}
                                                       &               & $\{\alpha_{2}, \delta \}$                    &(ii)   &$6$         \\ \cline{3-5}
                                                       &               & $\{\alpha_{1}, \alpha_{2} \}$                &(ii)   &$3$         \\ \hline
$(\mathrm{Sp}(4+n), \mathrm{Sp}(2)\times \mathrm{Sp}(2+n))$&$(4n,4,3)$ & $\{\alpha_{1}, \delta \}$                    &(ii)   &$9$          \\ \cline{3-5}
                                                       &               & $\{\alpha_{1}, \delta \}$                    &(ii)   &$4n+2$         \\ \cline{3-5}
                                                       &               & $\{\alpha_{1}, \alpha_{2} \}$                &(ii)   &$5$         \\ \hline
$(E_{6}, \mathrm{T}^{1}\cdot \mathrm{Spin}(10))$       &$(8,6,1)$      & $\{\alpha_{1}, \delta \}$                    &(ii)   &$9$          \\ \cline{3-5}
                                                       &               & $\{\alpha_{1}, \delta \}$                    &(ii)   &$10$        \\ \cline{3-5}
                                                       &               & $\{\alpha_{1}, \alpha_{2} \}$                &(ii)   &$3$         \\ \hline
\end{tabular}
\\[12pt]

{\bf Type $\rm{G}_{2}$}\\
\begin{tabular}{|c|c|c|c|c|} 
\hline
$(G,K_{1})$                                      &$(m_{1},m_{2})$& $\Delta$                                  &Theorem~\ref{Theorem:cohom2}&$\mathrm{codim}$ \\ \hline \hline
$(G_{2}, \mathrm{SO}(4))$                               &$(1,1)$       & $\{\alpha_{1}, \delta \}$                    &(ii)   &$3$          \\ \cline{3-5}
                                                       &               & $\{\alpha_{2}, \delta \}$                    &(ii)   &$3$          \\ \cline{3-5}
                                                       &               & $\{\alpha_{1}, \alpha_{2} \}$                &(iii)   &$3$          \\ \hline
$(G_{2}\times G_{2}, G_{2})$                           &$(2,2)$        & $\{\alpha_{1}, \delta \}$                    &(ii)   &$4$          \\ \cline{3-5}
                                                       &               & $\{\alpha_{2}, \delta \}$                    &(ii)   &$4$          \\ \cline{3-5}
                                                       &               & $\{\alpha_{1}, \alpha_{2} \}$                &(iii)   &$4$          \\ \hline
\end{tabular}
\\[24pt]

{\bf When $(\theta_{1}\not\sim \theta_{2})$}

{\bf Type $\rm{I\mathchar`-B}_{2}$}\\
\begin{tabular}{|c|c|c|c|c|} 
\hline
$(G,K_{1}, K_{2})$                             &$(m_{1},m_{2}, n_{1})$& $\Delta$                                  &Theorem~\ref{Theorem:cohom2}&$\mathrm{codim}$ \\ \hline \hline
$(\mathrm{SO}(2+a+b), $                        &$(b-2,1,a)$           & $\{\alpha_{1}, \tilde{\alpha} \}$                    &(i)   &$3$          \\ \cline{3-5}
$\mathrm{SO}(2+a)\times \mathrm{SO}(b),$       &                      & $\{\alpha_{2}, \tilde{\alpha} \}$                    &(ii)   &$b$          \\ \cline{3-5}
$\mathrm{SO}(2)\times \mathrm{SO}(a+b))$       &                      & $\{\alpha_{1}, \alpha_{2} \}$                        &(ii)   &$a+2$          \\ \hline
$(\mathrm{SO}(6)\times \mathrm{SO}(6),\Delta(\mathrm{SO}(6)\times \mathrm{SO}(6)), $      &$(2,2,2)$ &$\{\alpha_{1}, \tilde{\alpha} \}$                    &(i)   &$4$          \\ \cline{3-5}
$, K_{2})\ \text{with}\ (G_{\sigma})_{0}\cong \mathrm{SO}(3)\times \mathrm{SO}(3)$                                          &               & $\{\alpha_{2}, \tilde{\alpha} \}$                    &(ii)   &$4$          \\ \cline{3-5}
(C)                                                       &               & $\{\alpha_{1}, \alpha_{2} \}$                       &(ii)   &$4$          \\ \hline
% $(\mathrm{SO}(6), \mathrm{SO}(3)\times \mathrm{SO}(3))$      &$(2,2,2)$ &$\{\alpha_{1}, \tilde{\alpha} \}$                    &(i)   &$4$          \\ \cline{3-5}
% (condition C)                                          &               & $\{\alpha_{2}, \tilde{\alpha} \}$                    &(ii)   &$4$          \\ \cline{3-5}
%                                                        &               & $\{\alpha_{1}, \alpha_{2} \}$                       &(ii)   &$4$          \\ \hline
\end{tabular}

 Here $(2<b, 1\leq a)$.
\\[12pt]

{\bf Type $\rm{I\mathchar`-C}_{2}$}\\
\begin{tabular}{|c|c|c|c|c|} 
\hline
$(G,K_{1}, K_{2})$                             &$(m_{1},m_{2}, n_{1})$& $\Delta$                                    &Theorem~\ref{Theorem:cohom2}&$\mathrm{codim}$ \\ \hline \hline
$(\mathrm{SO}(8), $                            &$(2,1,2)$& $\{\alpha_{1}, \tilde{\alpha} \}$                        &(i)   &$3$          \\ \cline{3-5}
$\mathrm{SO}(4)\times \mathrm{SO}(4), \mathrm{U}(4))$&                      & $\{\alpha_{2}, \tilde{\alpha} \}$     &(ii)   &$4$          \\ \cline{3-5}
                                                          &                      & $\{\alpha_{1}, \alpha_{2} \}$    &(ii)   &$4$          \\ \hline
$(\mathrm{SU}(4), \mathrm{SO}(4), $                       &$(1,1,1)$             & $\{\alpha_{1}, \tilde{\alpha} \}$&(i)   &$3$          \\ \cline{3-5}
$\mathrm{S}(\mathrm{U}(2)\times \mathrm{U}(2)) )$         &                      & $\{\alpha_{2}, \tilde{\alpha} \}$&(ii)   &$3$          \\ \cline{3-5}
                                                       &                         & $\{\alpha_{1}, \alpha_{2} \}$    &(ii)   &$3$          \\ \hline
$(\mathrm{SU}(4)\times \mathrm{SU}(4),\Delta(\mathrm{SU}(4)\times \mathrm{SU}(4) ), $                        &$(2,2,2)$             & $\{\alpha_{1}, \tilde{\alpha} \}$&(i)   &$4$          \\ \cline{3-5}
$K_{2})\ \text{with}\ (G_{\sigma})_{0}\cong \mathrm{SO}(4)$\ (C)                                              &                      & $\{\alpha_{2}, \tilde{\alpha} \}$&(ii)   &$4$          \\ \cline{3-5}
                                                       &                         & $\{\alpha_{1}, \alpha_{2} \}$    &(ii)   &$4$          \\ \hline
$(\mathrm{SU}(4)\times \mathrm{SU}(4),\Delta(\mathrm{SU}(4)\times \mathrm{SU}(4)), $                        &$(2,2,2)$             & $\{\alpha_{1}, \tilde{\alpha} \}$&(i)   &$4$          \\ \cline{3-5}
$K_{2})\ \text{with}\ (G_{\sigma})_{0}\cong  \mathrm{Sp}(2)$\ (C)                                           &                      & $\{\alpha_{2}, \tilde{\alpha} \}$&(ii)   &$4$          \\ \cline{3-5}
                                                       &                         & $\{\alpha_{1}, \alpha_{2} \}$    &(ii)   &$4$          \\ \hline
\end{tabular}
 \\[12pt]

 {\bf Type $\rm{I\mathchar`-BC}_{2}\rm{\mathchar`-A}_{1}^{2}$}\\
\begin{tabular}{|c|c|c|c|c|} 
\hline
$(G,K_{1}, K_{2})$                                  &$(m_{1},m_{2},m_{3},n_{1})$& $\Delta$                             &Theorem~\ref{Theorem:cohom2}&$\mathrm{codim}$ \\ \hline \hline
$(\mathrm{SU}(2+a+b), $                             &$(2(b-2),2,1,2a)$          & $\{\alpha_{1}, \tilde{\alpha} \}$    &(ii)   &$2b-1$         \\ \cline{3-5}
$\mathrm{S}(\mathrm{U}(2+a)\times \mathrm{U}(b)),$  &                           & $\{\alpha_{2}, \tilde{\alpha} \}$    &(ii)   &$4$          \\ \cline{3-5}
$ \mathrm{S}(\mathrm{U}(2)\times \mathrm{U}(a+b)))$ &                           & $\{\alpha_{1}, \alpha_{2} \}$        &(ii)   &$2(a+1)$          \\ \hline
$(\mathrm{Sp}(2+a+b), $                             &$(4(b-1),4,3,4a)$          & $\{\alpha_{1}, \tilde{\alpha} \}$    &(ii)   &$4b+1$          \\ \cline{3-5}
$  \mathrm{Sp}(2+s)\times \mathrm{Sp}(t), $         &                           & $\{\alpha_{2}, \tilde{\alpha} \}$    &(ii)   &$6$          \\ \cline{3-5}
$\mathrm{Sp}(2)\times \mathrm{Sp}(s+t)  )$          &                           & $\{\alpha_{1}, \alpha_{2} \}$        &(ii)   &$4a+2$          \\ \hline
$(\mathrm{SO}(12), \mathrm{U}(6), \mathrm{U}(6)')$  &$(4,4,1,4)$                & $\{\alpha_{1}, \tilde{\alpha} \}$    &(ii)   &$7$          \\ \cline{3-5}
                                                          &                     & $\{\alpha_{2}, \tilde{\alpha} \}$    &(ii)   &$6$          \\ \cline{3-5}
                                                       &                        & $\{\alpha_{1}, \alpha_{2} \}$        &(ii)   &$6$          \\ \hline
\end{tabular}

Here $2<b, 1\leq a$.
\vspace{12pt}

{\bf Type $\rm{I\mathchar`-BC}_{2}\rm{\mathchar`-B}_{2}$}\\
\begin{tabular}{|c|c|c|c|c|} 
\hline
$(G,K_{1}, K_{2})$                                  &$(m_{1},m_{2},m_{3},n_{2})$& $\Delta$                             &Theorem~\ref{Theorem:cohom2}&$\mathrm{codim}$ \\ \hline \hline
$(\mathrm{SO}(4+2a),$                               &$(2(a-2),2,1,2)$           & $\{\alpha_{1}, \tilde{\alpha} \}$    &(ii)   &$2a-1$         \\ \cline{3-5}
$ \mathrm{SO}(4)\times \mathrm{SO}(2a),$            &                           & $\{\alpha_{2}, \tilde{\alpha} \}$    &(ii)   &$4$          \\ \cline{3-5}
$\mathrm{U}(2+a))$                                  &                           & $\{\alpha_{1}, \alpha_{2} \}$        &(iii)   &$4$          \\ \hline
$(E_{6},$                                           &$(4,4,1,2)$          & $\{\alpha_{1}, \tilde{\alpha} \}$          &(ii)   &$7$          \\ \cline{3-5}
$\mathrm{SU}(6)\cdot \mathrm{SU}(2), $              &                           & $\{\alpha_{2}, \tilde{\alpha} \}$    &(iii)   &$6$          \\ \cline{3-5}
$\mathrm{SO}(10)\cdot \mathrm{U}(1))$               &                           & $\{\alpha_{1}, \alpha_{2} \}$        &(iii)   &$4$          \\ \hline
$(E_{7},$                                           &$(8,6,1,2)$                & $\{\alpha_{1}, \tilde{\alpha} \}$    &(ii)   &$11$          \\ \cline{3-5}
$\mathrm{SO}(12)\cdot \mathrm{SU}(2),$              &                           & $\{\alpha_{2}, \tilde{\alpha} \}$    &(iii)   &$8$          \\ \cline{3-5}
$E_{6}\cdot \mathrm{U}(1))$                         &                        & $\{\alpha_{1}, \alpha_{2} \}$           &(iii)   &$4$          \\ \hline
\end{tabular}

Here $2\leq a$.
\vspace{12pt}

 {\bf Type $\rm{II\mathchar`-BC}_{2}$}\\
\begin{tabular}{|c|c|c|c|c|} 
\hline
$(G,K_{1}, K_{2})$                                  &$(m_{1},m_{2},m_{3},n_{3})$& $\Delta$                             &Theorem~\ref{Theorem:cohom2}&$\mathrm{codim}$ \\ \hline \hline
$(\mathrm{SU}(2+a),$                                &$(a-2,1,1)$                & $\{\alpha_{1}, \tilde{\alpha} \}$    &(iii)   &$3$         \\ \cline{3-5}
$ \mathrm{SO}(2+a),$                                &                           & $\{\alpha_{2}, \tilde{\alpha} \}$    &(iii)   &$a$          \\ \cline{3-5}
$ \mathrm{S}(\mathrm{U}(2)\times \mathrm{U}(a)))$   &                           & $\{\alpha_{1}, \alpha_{2} \}$        &(iii)   &$3$          \\ \hline
$(\mathrm{SO}(10), $                                &$(2,2,1)$                  & $\{\alpha_{1}, \tilde{\alpha} \}$    &(iii)   &$4$          \\ \cline{3-5}
$\mathrm{SO}(5)\times \mathrm{SO}(5),$              &                           & $\{\alpha_{2}, \tilde{\alpha} \}$    &(iii)   &$4$          \\ \cline{3-5}
$ \mathrm{U}(5))$                                   &                           & $\{\alpha_{1}, \alpha_{2} \}$        &(iii)   &$3$          \\ \hline
$(E_{6}, $                                          &$(4,3,1)$                  & $\{\alpha_{1}, \tilde{\alpha} \}$    &(iii)   &$5$          \\ \cline{3-5}
$\mathrm{Sp}(4),$                                   &                           & $\{\alpha_{2}, \tilde{\alpha} \}$    &(iii)   &$6$          \\ \cline{3-5}
$\mathrm{SO}(10)\cdot \mathrm{U}(1))$               &                           & $\{\alpha_{1}, \alpha_{2} \}$        &(iii)   &$3$          \\ \hline
\end{tabular}

 Here $2\leq a$.
 \vspace{12pt}

{\bf Type $\rm{III\mathchar`-A}_{2}$}\\
\begin{tabular}{|c|c|c|c|c|} 
\hline
$(G,K_{1}, K_{2})$                                  &$(m_{1},n_{1})$& $\Delta$                                       &Theorem~\ref{Theorem:cohom2}&$\mathrm{codim}$ \\ \hline \hline
$(\mathrm{SU}(6), \mathrm{Sp}(3), \mathrm{SO}(6))$  &$(2,2)$                    & $\{\alpha_{1}, \tilde{\alpha} \}$  &(iii)   &$4$         \\ \cline{3-5}
                                                    &                           & $\{\alpha_{2}, \tilde{\alpha} \}$  &(iii)   &$4$          \\ \cline{3-5}
                                                    &                           & $\{\alpha_{1}, \alpha_{2} \}$      &(iii)   &$4$          \\ \hline
$(E_{6}, \mathrm{Sp}(4),F_{4})$                     &$(4,4)$                    & $\{\alpha_{1}, \tilde{\alpha} \}$  &(iii)   &$6$          \\ \cline{3-5}
                                                    &                           & $\{\alpha_{2}, \tilde{\alpha} \}$  &(iii)   &$6$          \\ \cline{3-5}
                                                    &                           & $\{\alpha_{1}, \alpha_{2} \}$      &(iii)   &$6$          \\ \hline
$(U\times U,$                                       &$(m,m)$                    & $\{\alpha_{1}, \tilde{\alpha} \}$  &(iii)   &$m+2$          \\ \cline{3-5}
$\Delta(U\times U),$                                &                           & $\{\alpha_{2}, \tilde{\alpha} \}$  &(iii)   &$m+2$          \\ \cline{3-5}
$\overline{K}\times \overline{K})$, (B)             &                           & $\{\alpha_{1}, \alpha_{2} \}$      &(iii)   &$m+2$          \\ \hline
\end{tabular}\\
Here $m$ is the multiplicity of the root system of the symmetric pair $(U, \overline{K})$ of type $\mathrm{A}_{2}$.
\vspace{12pt}

{\bf Type $\rm{III\mathchar`-B}_{2}$}\\
\begin{tabular}{|c|c|c|c|c|} 
\hline
$(G,K_{1}, K_{2})$                                  &$(m_{1},m_{1}, n_{2})$& $\Delta$                                  &Theorem~\ref{Theorem:cohom2}&$\mathrm{codim}$ \\ \hline \hline
$(U\times U,$                                       &$(m,n,n)$                   & $\{\alpha_{1}, \tilde{\alpha} \}$   &(iii)   &$ m+2$          \\ \cline{3-5}
$\Delta(U\times U),$                                &                           & $\{\alpha_{2}, \tilde{\alpha} \}$    &(iii)   &$n+2$          \\ \cline{3-5}
$\overline{K}\times \overline{K})$, (B)             &                           & $\{\alpha_{1}, \alpha_{2} \}$        &(iii)   &$n+2$          \\ \hline
\end{tabular}\\
Here $(m ,n)$ is the multiplicity of the root system of the symmetric pair $(U, \overline{K})$ of type $\mathrm{B}_{2}$.
\vspace{12pt}

{\bf Type $\rm{III\mathchar`-C}_{2}$}\\
\begin{tabular}{|c|c|c|c|c|} 
\hline
$(G,K_{1}, K_{2})$                                  &$(m_{1},m_{1}, n_{2})$& $\Delta$                               &Theorem~\ref{Theorem:cohom2}&$\mathrm{codim}$ \\ \hline \hline
$(\mathrm{SU}(8),$                                  &$(4,3,1)$                  & $\{\alpha_{1}, \tilde{\alpha} \}$ &(i)   &$6$          \\ \cline{3-5}
$\mathrm{S}(\mathrm{U}(4)\times \mathrm{U}(4)),$    &                           & $\{\alpha_{2}, \tilde{\alpha} \}$ &(iii)   &$5$          \\ \cline{3-5}
$\mathrm{Sp}(4))$                                   &                           & $\{\alpha_{1}, \alpha_{2} \}$     &(iii)   &$3$          \\ \hline
$(\mathrm{Sp}(4),$                                  &$(2,1,2)$                  & $\{\alpha_{1}, \tilde{\alpha} \}$ &(i)   &$4$          \\ \cline{3-5}
$ \mathrm{U}(4), $                                  &                           & $\{\alpha_{2}, \tilde{\alpha} \}$ &(iii)   &$3$          \\ \cline{3-5}
$\mathrm{Sp}(2)\times \mathrm{Sp}(2))$              &                           & $\{\alpha_{1}, \alpha_{2} \}$     &(iii)   &$4$          \\ \hline
$(U\times U,$                                       &$(m,n,n)$                  & $\{\alpha_{1}, \tilde{\alpha} \}$ &(iii)   &$m+2$          \\ \cline{3-5}
$\Delta(U\times U),$                                &                           & $\{\alpha_{2}, \tilde{\alpha} \}$ &(iii)   &$n+2$          \\ \cline{3-5}
$\overline{K}\times \overline{K})$, (B)             &                           & $\{\alpha_{1}, \alpha_{2} \}$     &(iii)   &$n+2$          \\ \hline
\end{tabular}\\
Here $(m ,n)$ is the multiplicity of the root system of the symmetric pair $(U, \overline{K})$ of type $\mathrm{C}_{2}$.
\vspace{12pt}

{\bf Type $\rm{III\mathchar`-BC}_{2}$}\\
\begin{tabular}{|c|c|c|c|c|} 
\hline
$(G,K_{1}, K_{2})$                                  &$(m_{1},m_{1}, m_{3}, n_{3})$& $\Delta$                        &Theorem~\ref{Theorem:cohom2}&$\mathrm{codim}$ \\ \hline \hline
$(\mathrm{SU}(4+2s), $                              &$(4(s-2),4,3,1)$           & $\{\alpha_{1}, \tilde{\alpha} \}$ &(iii)   &$6$          \\ \cline{3-5}
$\mathrm{S}(\mathrm{U}(4)\times \mathrm{U}(2s)),$   &                           & $\{\alpha_{2}, \tilde{\alpha} \}$ &(iii)   &$4s-3$          \\ \cline{3-5}
$ \mathrm{Sp}(2+s))$                                &                           & $\{\alpha_{1}, \alpha_{2} \}$     &(iii)   &$3$          \\ \hline
$(\mathrm{SU}(10),$                                 &$(4,4,1,3)$                & $\{\alpha_{1}, \tilde{\alpha} \}$ &(iii)   &$6$          \\ \cline{3-5}
$  \mathrm{S}(\mathrm{U}(5)\times \mathrm{U}(5)) ,$ &                           & $\{\alpha_{2}, \tilde{\alpha} \}$ &(iii)   &$7$          \\ \cline{3-5}
$ \mathrm{Sp}(5))$                                  &                           & $\{\alpha_{1}, \alpha_{2} \}$     &(iii)   &$5$          \\ \hline
$(U\times U,$                                       &$(m,n,l,l)$                & $\{\alpha_{1}, \tilde{\alpha} \}$ &(iii)   &$n+2$          \\ \cline{3-5}
$\Delta(U\times U),$                                &                           & $\{\alpha_{2}, \tilde{\alpha} \}$ &(iii)   &$m+l+2$          \\ \cline{3-5}
$\overline{K}\times \overline{K})$, (B)             &                           & $\{\alpha_{1}, \alpha_{2} \}$     &(iii)   &$l+2$          \\ \hline
\end{tabular}\\
Here $(m ,n, l)$ is the multiplicity of the root system of the symmetric pair $(U, \overline{K})$ of type $\mathrm{BC}_{2}$.
\vspace{12pt}

{\bf Type $\rm{III\mathchar`-G}_{2}$}\\
\begin{tabular}{|c|c|c|c|c|} 
\hline
$(G,K_{1}, K_{2})$                                  &$(m_{1},m_{1}, n_{1}, n_{2})$& $\Delta$                          &Theorem~\ref{Theorem:cohom2}&$\mathrm{codim}$ \\ \hline \hline
$(U\times U,$                                       &$(m,n,m,n)$                  & $\{\alpha_{1}, \tilde{\alpha} \}$ &(iii)   &$n+2$          \\ \cline{3-5}
$\Delta(U\times U),$                                &                           & $\{\alpha_{2}, \tilde{\alpha} \}$   &(iii)   &$m+2$          \\ \cline{3-5}
$\overline{K}\times \overline{K})$, (B)             &                           & $\{\alpha_{1}, \alpha_{2} \}$       &(iii)   &$n+2$          \\ \hline
\end{tabular}\\
Here $(m ,n)$ is the multiplicity of the root system of the symmetric pair $(U, \overline{K})$ of type $\mathrm{G}_{2}$.

\section{Biharmonic homogeneous hypersurfaces in compact Lie groups}
\label{sect:Biharmonic homogeneous hypersurfaces in compact Lie groups}

In this section, applying Corollary~\ref{cohom1assoc}
%Theorem~\ref{Theorem 3.6} 
we will study biharmonic regular orbits of cohomogeneity one $(K_{2}\times K_{1})$-actions on compact Lie groups.

Let $(G,K_{1},K_{2})$ be a commutative compact symmetric triad
where $G$ is semisimple.
% Define an inner product $\langle \cdot, \cdot \rangle$ on $\mathfrak{g}$ by $\langle \cdot, \cdot \rangle = -\mathrm{Killing}(\cdot, \cdot)$.
% Then, $(N_{1}, \langle \cdot, \cdot \rangle)$ and $(N_{2}, \langle \cdot, \cdot \rangle)$ are Einstein manifolds with Einstein constant $c = 1/2$.
% It is known that the tension field of an orbit of a Hermann action is parallel in the normal bundle (see \cite{IST1}),
% i.e. $\overline{\nabla}_X^{\perp}\tau_{H}=0$ for every vector field $X$ on the orbit $K_{2}\cdot \pi_{1}(x)$.
Hereafter we assume that $\dim \mathfrak{a} = 1$. 
Then the orbit space of $(K_{2}\times K_{1})$-action is homeomorphic to a closed interval.
A point in the interior of the orbit space corresponds to a regular orbit,
and there exists a unique minimal (harmonic) orbit among regular orbits.
On the other hand, two endpoints of the orbit space correspond to singular orbits.
These singular orbits are minimal (harmonic), moreover these are weakly reflective (\cite{IST2}).
For $H \in \mathfrak{a}$, we set $x=\exp (H)$ and consider the orbit $(K_{2}\times K_{1})\cdot x$ 
of the $(K_{2}\times K_{1})$-action on $G$ through $x$.
For simplicity, we denote the tension field $dL_{x}^{-1}(\tau_{H})_{x}$ of the orbit $(K_{2}\times K_{1})\cdot x$ in $G$ by $\tau_{H}$ for $H \in \mathfrak{a}$.
\vskip0.6cm\par

%%%%%%%%%%%%%%%%%%%%%%%%%%%%%%%%%%%%%%%%%%%%%%%%%%%%%%%%%%%%%%%%%%%%%%%%%%%%%%%%%%%%%%%%%%%%%%%%%%%%%%%%%%%%%%%%
{\bf Cases of $\theta_{1}=\theta_{2}$.}
%%%%%%%%%%%%%%%%%%%%%%%%%%%%%%%%%%%%%%%%%%%%%%%%%%%%%%%%%%%%%%%%%%%%%%%%%%%%%%%%%%%%%%%%%%%%%%%%%%%%%%%%%%%%%%%%
First, we consider the cases of $\theta_{1}=\theta_{2}$. 
Then $(G, K_{1})=(G, K_{2})$ is a compact symmetric pair of rank one.
The restricted root type system of $(G, K_{1})$ is of type $\rm{B}_{1}$ or $\rm{BC}_{1}$.
Let $\vartheta := \langle \delta , H \rangle$ for $H \in \mathfrak{a}$, where $\delta$ is the highest root of $\tilde{\Sigma}$.
Then, by (\ref{eq:cell_of_isotropy_action}), $P_0 = \{ H \in \mathfrak{a} \mid 0 < \vartheta < \pi \}$ is a cell
in these types.

%%%%%%%%%%%%%%%%%%%%%%%%%%%%%%%%%%%%%%%%%%%%%%%%%%%%%%%%%%%%%%%%%%%%%%%%%%%%%%%%%%%%%%%%%%%%%%%%%%%%%%%%%%%%%%%%
\subsection{Type $\rm{B}_{1}$}
%%%%%%%%%%%%%%%%%%%%%%%%%%%%%%%%%%%%%%%%%%%%%%%%%%%%%%%%%%%%%%%%%%%%%%%%%%%%%%%%%%%%%%%%%%%%%%%%%%%%%%%%%%%%%%%%
We set $\Sigma^{+}=\{\alpha \}, W^{+}=\emptyset$ and $m=m(\alpha)$.
In this case, $\vartheta$ satisfies $0<\vartheta <\pi$.
By Corollary~\ref{cor:2} we have 
$$
\tau_{H}=-m\cot \vartheta \alpha. 
$$
Hence $\tau_{H}=0$ if and only if $\vartheta = \pi /2$ holds.
By Corollary~\ref{cohom1assoc}, $(K_{2}\times K_{1})\cdot x$ in $G$ is biharmonic if and only if 
$$
0=m\langle \tau_{H}, \alpha \rangle \left(\frac{3}{2}- (\cot \vartheta )^{2} \right)
$$
holds. 
Therefore, 
the orbit $(K_{2}\times K_{1})\cdot x$ is proper biharmonic if and only if 
$$
(\cot \vartheta)^{2} = \frac{3}{2}  
$$
holds.
{\em In this case, there exist exactly two proper biharmonic hypersurfaces
which are regular orbits of the $(K_{2}\times K_{1})$-action on $G$.}

%%%%%%%%%%%%%%%%%%%%%%%%%%%%%%%%%%%%%%%%%%%%%%%%%%%%%%%%%%%%%%%%%%%%%%%%%%%%%%%%%%%%%%%%%%%%%%%%%%%%%%%%%%%%%%%%
\subsection{Type $\rm{BC}_{1}$}
%%%%%%%%%%%%%%%%%%%%%%%%%%%%%%%%%%%%%%%%%%%%%%%%%%%%%%%%%%%%%%%%%%%%%%%%%%%%%%%%%%%%%%%%%%%%%%%%%%%%%%%%%%%%%%%%
We set $\Sigma^{+}=\{\alpha , 2\alpha \}, W^{+}=\emptyset$ and $m_{1}=m(\alpha), m_{2}=m(2\alpha)$.
$\vartheta$ satisfies $0<\vartheta <\pi$.
By Corollary~\ref{cor:2} we have 
\begin{align*}
\tau_{H}
&=-m_{1}\cot \left( \frac{\vartheta}{2}\right) \alpha 
-m_{2}(\cot \vartheta ) 2\alpha \\
&=-\left\{ (m_{1}+m_{2})\cot \left( \frac{\vartheta}{2}\right) -m_{2}\cot \left( \frac{\vartheta}{2}\right) \right\}\alpha
. 
\end{align*}
Hence $\tau_{H}=0$ if and only if 
$$
\left( \cot \frac{\vartheta}{2} \right)^{2} = \frac{m_{2}}{m_{1}+m_{2}}
$$ 
holds.
By Corollary~\ref{cohom1assoc}, $(K_{2}\times K_{1})\cdot x$ in $G$ is biharmonic if and only if 
$$
0=\langle \tau_{H}, \alpha \rangle \left\{ 
m_{1}\left(\frac{3}{2}- \left( \cot \frac{\vartheta}{2} \right)^{2} \right)
+4m_{2}\left(\frac{3}{2}- (\cot \vartheta)^{2} \right)\right\}
$$
holds. 
Therefore, 
the orbit $(K_{2}\times K_{1})\cdot x$ is biharmonic if and only if $\tau_{H}=0$ or 
$$
0=
m_{1}\left(\frac{3}{2}- \left( \cot \frac{\vartheta}{2} \right)^{2} \right)
+4m_{2}\left(\frac{3}{2}- (\cot \vartheta)^{2} \right)
$$
holds.
The last equation is equivalent to 
\begin{align*}
\left( (m_{1}+m_{2})\left( \cot \frac{\vartheta}{2} \right)^{2} -m_{2}\right) \left( \left( \cot \frac{\vartheta}{2} \right)^{2} -1\right) =\left( \frac{m_{1}}{2} + 6m_{2}\right) \left( \cot \frac{\vartheta}{2} \right)^{2}.
\end{align*}
Since $(m_{1}/2)+6m_{2}>0$, the solutions of the equation are not harmonic.
Hence the orbit $(K_{2}\times K_{1})\cdot x$ is proper biharmonic if and only if 
$$
\left( \cot \left( \frac{\vartheta}{2}\right) \right)^{2} = \frac{3m_{1}+16m_{2}\pm \sqrt{(3m_{1}+16m_{2})^{2}-16(m_{1}+m_{2})m_{2}}}{4(m_{1}+m_{2})}  
$$
holds.
{\em In this case, there exist exactly two proper biharmonic hypersurfaces which are regular orbits of the $(K_{2}\times K_{1})$-action on $G$.}
\vspace{12pt}

%%%%%%%%%%%%%%%%%%%%%%%%%%%%%%%%%%%%%%%%%%%%%%%%%%%%%%%%%%%%%%%%%%%%%%%%%%%%%%%%%%%%%%%%%%%%%%%%%%%%%%%%%%%%%%%%
{\bf Cases of $\theta_{1}\not\sim \theta_{2}$.}
%%%%%%%%%%%%%%%%%%%%%%%%%%%%%%%%%%%%%%%%%%%%%%%%%%%%%%%%%%%%%%%%%%%%%%%%%%%%%%%%%%%%%%%%%%%%%%%%%%%%%%%%%%%%%%%%
If $G$ is simple and $\theta_1 \not\sim \theta_2$,
then for a commutative compact symmetric triad $(G, K_1, K_2)$ the triple $(\tilde{\Sigma}, \Sigma, W)$ is a symmetric triad
with multiplicities $m(\lambda)$ and $n(\alpha)$ (cf. Theorem~\ref{Theorem:I2 AB}).

All the symmetric triads with $\dim \mathfrak{a}=1$ are classified into the following four types (\cite{I1}):
\begin{center}
\begin{tabular}{|l|c|c|c|} 
\hline                            & $\Sigma^{+}$           & $W^{+}$               & $\tilde{\alpha}$ \\ \hline \hline
$\rm{III\mathchar`-B}_{1}$  & $\{\alpha \}$          & $\{\alpha \}$         & $\alpha$        \\ \hline
$\rm{I\mathchar`-BC}_{1}$   & $\{\alpha, 2\alpha \}$ & $\{\alpha \}$         & $\alpha$        \\ \hline
$\rm{II\mathchar`-BC}_{1}$  & $\{\alpha \}$          & $\{\alpha, 2\alpha \}$& $2\alpha$       \\ \hline
$\rm{III\mathchar`-BC}_{1}$ & $\{\alpha, 2\alpha \}$ & $\{\alpha, 2\alpha \}$& $2\alpha$       \\ 
\hline
\end{tabular}
\end{center}

\medskip\par
Let $\vartheta := \langle \tilde{\alpha}, H \rangle$ for $H \in \mathfrak{a}$.
Then, by (\ref{eq:cell_of_Hermann_action}), $P_0 = \{ H \in \mathfrak{a} \mid 0 < \vartheta < \pi/2\}$ is a cell
in these types.
If $G$ is simply connected and $K_{1}$ and $K_{2}$ are connected, then the orbit space of the $(K_2\times K_{1})$-action on $G$ is identified with 
$\overline{P_0} = \{ H \in \mathfrak{a} \mid 0 \leq \vartheta \leq \pi/2\}$,
more precisely, each orbit meets $\exp \overline{P_0}$ at one point.
For each orbit $(K_{2}\times K_{1})\cdot x$, $dL_{x}^{-1}(\tau_{H})_{x} \in \mathfrak{a}$ holds.
%%%%%%%%%%%%%%%%%%%%%%%%%%%%%%%%%%%%%%%%%%%%%%%%%%%%%%%%%%%%%%%%%%%%%%%%%%%%%%%%%%%%%%%%%%%%%%%%%%%%%%%%%%%%%%%%
\subsection{Type $\rm{III\mathchar`-B}_{1}$}
%%%%%%%%%%%%%%%%%%%%%%%%%%%%%%%%%%%%%%%%%%%%%%%%%%%%%%%%%%%%%%%%%%%%%%%%%%%%%%%%%%%%%%%%%%%%%%%%%%%%%%%%%%%%%%%%
We set $\Sigma^{+}=\{\alpha \}, W^{+}=\{\alpha\}$ and $m=m(\alpha), n=n(\alpha)$.
By Corollary~\ref{cor:2} we have 
\begin{align*}
\tau_{H}
&=-m\cot \vartheta \alpha 
+n\tan \vartheta \alpha . 
\end{align*}
Hence $\tau_{H}=0$ if and only if 
$$
(\cot \vartheta)^{2} = \frac{n}{m}
$$ 
holds.
By Corollary~\ref{cohom1assoc}, $(K_{2}\times K_{1})\cdot x$ in $G$ is biharmonic if and only if 
$$
0=\langle \tau_{H}, \alpha \rangle \left\{ 
m\left(\frac{3}{2}- (\cot \vartheta )^{2} \right)
+n\left(\frac{3}{2}- (\tan \vartheta)^{2} \right)\right\}
$$
holds. 
Therefore, 
the orbit $(K_{2}\times K_{1})\cdot x$ is biharmonic if and only if $\tau_{H}=0$ or 
$$
0= 
m\left(\frac{3}{2}- (\cot \vartheta )^{2} \right)
+n\left(\frac{3}{2}- (\tan \vartheta)^{2} \right)
$$
holds.
The last equation is equivalent to 
\begin{align*}
0=&
m(\cot \vartheta)^{4}-\frac{3}{2}(m+n)(\cot \vartheta)^{2}+n\\
=&(m(\cot \vartheta)^{2}-n)(m(\cot \vartheta)^{2}-1)-\left( \frac{m+n}{2}\right)(\cot \vartheta)^{2} .
\end{align*}
Since $(m+n)/2>0$, the solutions of the equation are not harmonic.
Hence the orbit $(K_{2}\times K_{1})\cdot x$ is proper biharmonic if and only if 
$$
(\cot \vartheta)^{2} = \frac{3(m+n)\pm \sqrt{9(m+n)^{2}-16mn}}{4m}  
$$
holds.
{\em In this case, there exist exactly two proper biharmonic hypersurfaces which are regular orbits of the $(K_{2}\times K_{1})$-action on $G$.}

%%%%%%%%%%%%%%%%%%%%%%%%%%%%%%%%%%%%%%%%%%%%%%%%%%%%%%%%%%%%%%%%%%%%%%%%%%%%%%%%%%%%%%%%%%%%%%%%%%%%%%%%%%%%%%%%
\subsection{Type $\rm{I\mathchar`-BC}_{1}$}
%%%%%%%%%%%%%%%%%%%%%%%%%%%%%%%%%%%%%%%%%%%%%%%%%%%%%%%%%%%%%%%%%%%%%%%%%%%%%%%%%%%%%%%%%%%%%%%%%%%%%%%%%%%%%%%%
We set $\Sigma^{+}=\{\alpha, 2\alpha \}, W^{+}=\{\alpha\}$ and $m_{1}=m(\alpha), m_{2}=m(2\alpha), n=n(\alpha)$.
By Corollary~\ref{cor:2} we have 
\begin{align*}
\tau_{H}
&=-m_{1}\cot \vartheta \alpha 
-m_{2}\cot (2\vartheta) 2\alpha 
+n\tan \vartheta \alpha \\
&=\{ -(m_{1}+m_{2}) \cot \vartheta
+(m_{2}+n)\tan \vartheta\} \alpha.
\end{align*}
Hence $\tau_{H}=0$ if and only if 
$$
(\cot \vartheta)^{2} = \frac{m_{2}+n}{m_{1}+m_{2}}
$$ 
holds.
By Corollary~\ref{cohom1assoc}, $(K_{2}\times K_{1})\cdot x$ in $G$ is biharmonic if and only if 
$$
0=\langle \tau_{H}, \alpha \rangle \left\{ 
m_{1}\left(\frac{3}{2}- (\cot \vartheta )^{2} \right)
+4m_{2}\left(\frac{3}{2}- (\cot (2\vartheta) )^{2} \right)
+n\left(\frac{3}{2}- (\tan \vartheta)^{2} \right)\right\}
$$
holds. 
Therefore, 
the orbit $(K_{2}\times K_{1})\cdot x$ is biharmonic if and only if $\tau_{H}=0$ or 
$$
0=
m_{1}\left(\frac{3}{2}- (\cot \vartheta )^{2} \right)
+4m_{2}\left(\frac{3}{2}- (\cot (2\vartheta) )^{2} \right)
+n\left(\frac{3}{2}- (\tan \vartheta)^{2} \right)
$$
holds.
The last equation is equivalent to 
\begin{align*}
0=&
(m_{1}+m_{2})(\cot \vartheta)^{4}-\left( \frac{3}{2}(m_{1}+n)+8m_{2}\right)(\cot \vartheta)^{2}+(m_{2}+n)\\
=&((m_{2}+m_{2})(\cot \vartheta)^{2}-(m_{2}+n))((\cot \vartheta)^{2}-1)-\left( \frac{m_{1}+n}{2}+6m_{2}\right)(\cot \vartheta)^{2}.
\end{align*}
Since $(m_{1}+n)/2+6m_{2}>0$, the solutions of the equation are not harmonic.
Hence the orbit $(K_{2}\times K_{1})\cdot x$ is proper biharmonic if and only if 

$$
(\cot \vartheta)^{2} = \frac{3(m+n)+16m_{2}\pm \sqrt{(3(m+n)+16m_{2})^2-16(m_{1}+m_{2})(m_{2}+n)}}{4(m_{1}+m_{2})}  
$$
holds.
{\em In this case, there exist exactly two proper biharmonic hypersurfaces
which are regular orbits of the $(K_{2}\times K_{1})$-action on $G$.}

%%%%%%%%%%%%%%%%%%%%%%%%%%%%%%%%%%%%%%%%%%%%%%%%%%%%%%%%%%%%%%%%%%%%%%%%%%%%%%%%%%%%%%%%%%%%%%%%%%%%%%%%%%%%%%%%
\subsection{Type $\rm{II\mathchar`-BC}_{1}$}
%%%%%%%%%%%%%%%%%%%%%%%%%%%%%%%%%%%%%%%%%%%%%%%%%%%%%%%%%%%%%%%%%%%%%%%%%%%%%%%%%%%%%%%%%%%%%%%%%%%%%%%%%%%%%%%%
We set $\Sigma^{+}=\{\alpha\}, W^{+}=\{\alpha, 2\alpha \}$ and $m=m(\alpha)=n(\alpha), n=n(2\alpha)$.
By Corollary~\ref{cor:2} we have 
\begin{align*}
\tau_{H}
&=-m\cot \left( \frac{\vartheta}{2}\right) \alpha 
+m\tan \left( \frac{\vartheta}{2}\right) \alpha
+n\tan (\vartheta) 2\alpha \\
&=2\{ -m \cot \vartheta
+n\tan \vartheta\} \alpha.
\end{align*}
Hence $\tau_{H}=0$ if and only if 
$$
(\cot \vartheta)^{2} = \frac{n}{m}
$$ 
holds.
By Corollary~\ref{cohom1assoc}, $(K_{2}\times K_{1})\cdot x$ in $G$ is biharmonic if and only if 
$$
0=\langle \tau_{H}, \alpha \rangle 
\left\{ 
m\left(\frac{3}{2}- \left( \cot \frac{\vartheta}{2} \right)^{2} \right)
+m\left(\frac{3}{2}- \left( \tan \frac{\vartheta}{2} \right)^{2} \right)
+4n\left(\frac{3}{2}- (\tan \vartheta )^{2} \right)
\right\}
$$
holds. 
Therefore, 
the orbit $(K_{2}\times K_{1})\cdot x$ is biharmonic if and only if $\tau_{H}=0$ or 
$$
0= 
m\left(\frac{3}{2}- \left( \cot \frac{\vartheta}{2} \right)^{2} \right)
+m\left(\frac{3}{2}- \left( \tan \frac{\vartheta}{2} \right)^{2} \right)
+4n\left(\frac{3}{2}- (\tan \vartheta )^{2} \right)
$$
holds.
The last equation is equivalent to 
\begin{align*}
0=&
4m(\cot \vartheta)^{4}-\left( m+6n \right)(\cot \vartheta)^{2}+4n\\
=&4(m(\cot \vartheta)^{2}-n)((\cot \vartheta)^{2}-1)-(-3m+2n)(\cot \vartheta)^{2}.
\end{align*}
If $-3m+2n=0$, then the orbit $(K_{2}\times K_{1})\cdot x$ is proper biharmonic if and only if
$(\cot \vartheta)^{2}=1$ holds.
When $-3m+2n\neq 0$,
the solutions of the equation are not harmonic.
\begin{itemize}
\item 
{\em When $(m+6n)^2-64mn>0$, then 
the orbit $(K_{2}\times K_{1})\cdot x$ is proper biharmonic if and only if 
$$
(\cot \vartheta)^{2} = \frac{m+6n\pm \sqrt{(m+6n)^2-64mn}}{8m}  
$$
holds.
The condition $(m+6n)^2-64mn>0$ is equivalent to $m<(26-8\sqrt{10})n$ or $(26+8\sqrt{10})n<m$.
In this case, 
if $(m+6n)^2-64mn>0$, then 
there exist exactly
two proper biharmonic hypersurfaces which are regular orbits of the $(K_{2}\times K_{1})$-action on $G$.}
\item {\em When $(m+6n)^2-64mn<0$, biharmonic regular orbits of the $(K_{2}\times K_{1})$-action
on $G$ is harmonic.}
\end{itemize}
\begin{remark}\rm
By the classification of compact symmetric triads, we can see that there is no compact symmetric triad which satisfies $(m+6n)^2-64mn=0$.
\end{remark}
%%%%%%%%%%%%%%%%%%%%%%%%%%%%%%%%%%%%%%%%%%%%%%%%%%%%%%%%%%%%%%%%%%%%%%%%%%%%%%%%%%%%%%%%%%%%%%%%%%%%%%%%%%%%%%%%
\subsection{Type $\rm{III\mathchar`-BC}_{1}$}
%%%%%%%%%%%%%%%%%%%%%%%%%%%%%%%%%%%%%%%%%%%%%%%%%%%%%%%%%%%%%%%%%%%%%%%%%%%%%%%%%%%%%%%%%%%%%%%%%%%%%%%%%%%%%%%%
We set $\Sigma^{+}=\{\alpha, 2\alpha \}, W^{+}=\{\alpha, 2\alpha \}$ and $m_{1}=m(\alpha)=n(\alpha), m_{2}=m(2\alpha), n=n(2\alpha)$.
By Corollary~\ref{cor:2} we have 
\begin{align*}
\tau_{H}
&=-m_{1}\cot \left( \frac{\vartheta}{2}\right) \alpha 
+m_{1}\tan \left( \frac{\vartheta}{2}\right) \alpha
+n\tan \vartheta (2\alpha)
-m_{2}\cot \vartheta (2\alpha) \\
&=2\{ -(m_{1}+m_{2}) \cot \vartheta
+n\tan \vartheta\} \alpha.
\end{align*}
Hence $\tau_{H}=0$ if and only if 
$$
(\cot \vartheta)^{2} = \frac{n}{m_{1}+m_{2}}
$$ 
holds.
By Corollary~\ref{cohom1assoc}, $(K_{2}\times K_{1})\cdot x$ in $G$ is biharmonic if and only if 
\begin{align*}
0=&\langle \tau_{H}, \alpha \rangle 
\left\{ 
m_{1}\left(\frac{3}{2}- \left(\cot \frac{\vartheta}{2} \right)^{2} \right)
+m_{1}\left(\frac{3}{2}- \left( \tan \frac{\vartheta}{2} \right)^{2} \right)\right. \\
&\left. +4n\left(\frac{3}{2}- (\tan \vartheta )^{2} \right)
+4m_{2}\left(\frac{3}{2}- (\cot \vartheta )^{2} \right)
\right\}
\end{align*}
holds. 
Therefore, 
the orbit $(K_{2}\times K_{1})\cdot x$ is biharmonic if and only if $\tau_{H}=0$ or 
\begin{align*}
0=& 
m_{1}\left(\frac{3}{2}- \left(\cot \frac{\vartheta}{2} \right)^{2} \right)
+m_{1}\left(\frac{3}{2}- \left( \tan \frac{\vartheta}{2} \right)^{2} \right) \\
& +4n\left(\frac{3}{2}- (\tan \vartheta )^{2} \right)
+4m_{2}\left(\frac{3}{2}- (\cot \vartheta )^{2} \right)
\end{align*}
holds.
The last equation is equivalent to 
\begin{align*}
0=&
4(m_{1}+m_{2})(\cot \vartheta)^{4}-\left( m_{1}+6m_{2}+6n \right)(\cot \vartheta)^{2}+4n\\
=&4((m_{1}+m_{2})(\cot \vartheta)^{2}-n)((\cot \vartheta)^{2}-1)-(-3m_{1}+m_{2}+2n)(\cot \vartheta)^{2}.
\end{align*}
If $-3m_{1}+m_{2}+2n=0$, then the orbit $(K_{2}\times K_{1})\cdot x$ is proper biharmonic if and only if
$(\cot \vartheta)^{2}=1$ holds.
When $-3m_{1}+m_{2}+2n \neq 0$,
the solutions of the equation are not harmonic.
\begin{itemize}
\item {\em When $(m_{1}+6m_{2}+6n)^2-64(m_{1}+m_{2})n>0$, then 
the orbit $(K_{2}\times K_{1})\cdot x$ is proper biharmonic if and only if 
$$
(\cot \vartheta)^{2} = \frac{m_{1}+6m_{2}+6n\pm \sqrt{(m_{1}+6m_{2}+6n)^2-64(m_{1}+m_{2})n}}{8(m_{1}+m_{2})}  
$$
holds.
In this case, 
if $(m_{1}+6m_{2}+6n)^2-64(m_{1}+m_{2})n>0$, 
then there exist exactly
two proper biharmonic hypersurfaces which are regular orbits of
the $(K_{2}\times K_{1})$-action on $G$.}
\item 
{\em When $(m_{1}+6m_{2}+6n)^2-64(m_{1}+m_{2})n<0$, 
biharmonic regular orbits of
the $(K_{2}\times K_{1})$-action on $G$ is harmonic.}
\end{itemize}

\begin{remark}\rm
By the classification of compact symmetric triads, we can see that there is no compact symmetric triad which satisfies $(m_{1}+6m_{2}+6n)^2-64(m_{1}+m_{2})n=0$.
\end{remark}
%%%%%%%%%%%%%%%%%%%%%%%%%%%%%%%%%%%%%%%%%%%%%%%%%%%%%%%%%%%%%%%%%%%%%%%%%%%%%%%%%%%%%%%%%%%%%%%%
%%%%%%%%%%%%%%%%%%%%%%%%%%%%%%%%%%%%%%%%%%%%%%%%%%%%%%%%%%%%%%%%%%%%%%%%%%%%%%%%%%%%%%%%%%%%%%%%
%%%%%%%%%%%%%%%%%%%%%%%%%%%%%%%%%%%%%%%%%%%%%%%%%%%%%%%%%%%%%%%%%%%%%%%%%%%%%%%%%%%%%%%%%%%%%%%%
Let $b>0$, $c>1$ and $q>1$.
Each commutative compact symmetric triad $(G, K_{1}, K_{2})$
where $G$ is simple, $\theta_1 \not\sim \theta_2$ and $\dim \mathfrak{a}=1$
is one of the following (see \cite{I2}):
\vspace{12pt}

\noindent
{\bf Type $\rm{III\mathchar`-B}_{1}$}\\
\begin{tabular}{|c|c|} 
\hline
$(G,K_{1},K_{2})$                                                                   &$(m(\alpha),n(\alpha))$  \\ \hline \hline
$(\mathrm{SO}(1+b+c), \mathrm{SO}(1+b) \times \mathrm{SO}(c), \mathrm{SO}(b+c))$    &$(c-1,b)$\\ \hline
$(\mathrm{SU}(4), \mathrm{Sp}(2), \mathrm{SO}(4))$                                  &$(2,2)$  \\ \hline
$(\mathrm{SU}(4), \mathrm{S}(\mathrm{U}(2) \times \mathrm{U}(2)), \mathrm{Sp}(2))$ &$(3,1)$  \\ \hline
$(\mathrm{Sp}(2), \mathrm{U}(2), \mathrm{Sp}(1) \times \mathrm{Sp}(1))$             &$(1,2)$  \\ \hline
\end{tabular}
\vspace{12pt}

\noindent
{\bf Type $\rm{I\mathchar`-BC}_{1}$}\\
\begin{tabular}{|c|c|} 
\hline
$(G,K_{1},K_{2})$                                                                           &$(m(\alpha),m(2\alpha),n(\alpha))$  \\ \hline \hline
$(\mathrm{SO}(2+2q), \mathrm{SO}(2) \times \mathrm{SO}(2q), \mathrm{U}(1+q))$               &$(2(q-1),1,2(q-1))$\\ \hline
$(\mathrm{SU}(1+b+c), \mathrm{S}(\mathrm{U}(1+b) \times \mathrm{U}(c)), \mathrm{S}(\mathrm{U}(1) \times \mathrm{U}(b+c))$&$(2(c-1),1,2b)$\\ \hline
$(\mathrm{Sp}(1+b+c), \mathrm{Sp}(1+b) \times \mathrm{Sp}(c), \mathrm{Sp}(1) \times \mathrm{Sp}(b+c))$&$(4(c-1),3,4b)$\\ \hline
$(\mathrm{SO}(8), \mathrm{U}(4), \mathrm{U}(4)')$                                             &$(4,1,1)$ \\ \hline
\end{tabular}
\vspace{12pt}

\noindent
{\bf Type $\rm{II\mathchar`-BC}_{1}$}\\
\begin{tabular}{|c|c|} 
\hline
$(G,K_{1},K_{2})$                                                                           &$(m(\alpha),n(\alpha),n(2\alpha))$  \\ \hline \hline
$(\mathrm{SO}(6), \mathrm{U}(3), \mathrm{SO}(3) \times \mathrm{SO}(3))$                         &$(2,2,1)$ \\ \hline
$(\mathrm{SU}(1+q), \mathrm{SO}(1+q), \mathrm{S}(\mathrm{U}(1) \times \mathrm{U}(q)))$   &$(q-1,q-1,1)$\\ \hline
\end{tabular}
\vspace{12pt}

\noindent
{\bf Type $\rm{III\mathchar`-BC}_{1}$}\\
\begin{tabular}{|c|c|} 
\hline
$(G,K_{1},K_{2})$                                                                           &$(m(\alpha),m(2\alpha),n(\alpha),n(2\alpha))$  \\ \hline \hline
$(\mathrm{SU}(2+2q), \mathrm{S}(\mathrm{U}(2) \times \mathrm{U}(2q)), \mathrm{Sp}(1+q))$     &$(4(q-1),3,4(q-1),1)$\\ \hline
$(\mathrm{Sp}(1+q), \mathrm{U}(1+q), \mathrm{Sp}(1) \times \mathrm{Sp}(q)) $          &$(2(q-1),1,2(q-1),2)$\\ \hline
$(\mathrm{E}_{6}, \mathrm{SU}(6) \cdot \mathrm{SU}(2), \mathrm{F}_{4})$                         &$(8,3,8,5)$ \\ \hline
$(\mathrm{E}_{6}, \mathrm{SO}(10) \cdot \mathrm{U}(1), \mathrm{F}_{4})$                         &$(8,7,8,1)$ \\ \hline
$(\mathrm{F}_{4}, \mathrm{Sp}(3) \cdot \mathrm{Sp}(1), \mathrm{Spin}(9))$                         &$(4,3,4,4)$\\ \hline
\end{tabular}

\medskip
Here, we define 
$\mathrm{U}(4)'=\{g\in \mathrm{SO}(8) \mid JgJ^{-1}=g \}$, 
where
$$
J=\left[
\begin{array}{cc|cc}
& &I_{3} &\\
& & &-1\\ \hline
-I_{3}& & &\\
& 1& &
\end{array}
\right]
$$ 
and $I_l$ denotes the identity matrix of $l \times l$.
 
%%%%%%%%%%%%%%%%%%%%%%%%%%%%%%%%%%%%%%%%%%%%%%%%%%%%%%%%%%%%%%%%%%%%%%%%%%%%%%%%%%%%%%%%%%%%%%%%
%%%%%%%%%%%%%%%%%%%%%%%%%%%%%%%%%%%%%%%%%%%%%%%%%%%%%%%%%%%%%%%%%%%%%%%%%%%%%%%%%%%%%%%%%%%%%%%%
%%%%%%%%%%%%%%%%%%%%%%%%%%%%%%%%%%%%%%%%%%%%%%%%%%%%%%%%%%%%%%%%%%%%%%%%%%%%%%%%%%%%%%%%%%%%%%%%
Summing up the above, 
we obtain the following theorem.
\begin{theorem}\label{thm:list_of_biharmonic_orbits_in_Lie_group}
Let $(G,K_1,K_2)$ be a commutative compact symmetric triad where $G$ is simple,
and suppose that the $(K_{2}\times K_{1})$-action on $G$ is of cohomogeneity one.
Then all the proper biharmonic hypersurfaces which are regular orbits of the $(K_{2}\times K_{1})$-action 
in the compact Lie group $G$
are classified into the following lists:
\begin{enumerate}
\item When $(G,K_1,K_2)$ is one of the following cases,
there exist exactly two distinct proper biharmonic hypersurfaces which are regular orbits of
the $(K_{2}\times K_{1})$-action on $G$.
\begin{enumerate}
\item[(1-1) ] $(\mathrm{SO}(1+b+c),\ \mathrm{SO}(1+b) \times \mathrm{SO}(c),\ \mathrm{SO}(b+c)) \quad (b>0,\ c>1)$%
\item[(1-2) ] $(\mathrm{SU}(4),\ \mathrm{Sp}(2),\ \mathrm{SO}(4))$
\item[(1-3) ] $(\mathrm{SU}(4),\ \mathrm{S}(\mathrm{U}(2) \times \mathrm{U}(2)),\ \mathrm{Sp}(2))$
\item[(1-4) ] $(\mathrm{Sp}(2),\ \mathrm{U}(2),\ \mathrm{Sp}(1) \times \mathrm{Sp}(1))$ 
\item[(1-5) ] $(\mathrm{SO}(2+2q),\ \mathrm{SO}(2) \times \mathrm{SO}(2q),\ \mathrm{U}(1+q)) \quad (q>1)$
\item[(1-6) ] $(\mathrm{SU}(1+b+c),\ \mathrm{S}(\mathrm{U}(1+b) \times \mathrm{U}(c)),\ \mathrm{S}(\mathrm{U}(1) \times \mathrm{U}(b+c)) \quad (b \geq 0,\ c>1)$
\item[(1-7) ] $(\mathrm{Sp}(1+b+c),\ \mathrm{Sp}(1+b) \times \mathrm{Sp}(c),\ \mathrm{Sp}(1) \times \mathrm{Sp}(b+c)) \quad (b \geq 0,\ c>1)$
\item[(1-8) ] $(\mathrm{SO}(8),\ \mathrm{U}(4),\ \mathrm{U}(4)')$
\item[(1-9) ] $(\mathrm{SU}(1+q),\ \mathrm{SO}(1+q),\ \mathrm{S}(\mathrm{U}(1) \times \mathrm{U}(q))) \quad (q>52)$
\item[(1-10) ] $(\mathrm{SU}(2+2q),\ \mathrm{S}(\mathrm{U}(2) \times \mathrm{U}(2q)),\ \mathrm{Sp}(1+q)) \quad (q>1)$
\item[(1-11) ] $(\mathrm{Sp}(1+q),\ \mathrm{U}(1+q),\ \mathrm{Sp}(1) \times \mathrm{Sp}(q)) \quad (q=2\  \text{or}\ q>45)$
\item[(1-12) ] $(\mathrm{E}_{6},\ \mathrm{SO}(10) \cdot \mathrm{U}(1),\ \mathrm{F}_{4})$
\item[(1-13) ] $(\mathrm{F}_{4},\ \mathrm{Sp}(3) \cdot \mathrm{Sp}(1),\ \mathrm{Spin}(9))$
\item[(1-14) ] $(\mathrm{SO}(1+q),\ \mathrm{SO}(q),\ \mathrm{SO}(q)) \quad (q>1)$
% \item[(2-7) ] $(\mathrm{SU}(1+q),\ \mathrm{S}(\mathrm{U}(1) \times \mathrm{U}(q)),\ \mathrm{S}(\mathrm{U}(1) \times \mathrm{U}(q))) \quad (q>1)$
% \item[(2-8) ] $(\mathrm{Sp}(1+q),\ \mathrm{Sp}(1) \times \mathrm{Sp}(q),\ \mathrm{Sp}(1) \times \mathrm{Sp}(q)) \quad (q>1)$
\item[(1-15) ] $(\mathrm{F}_4,\ \mathrm{Spin}(9),\ \mathrm{Spin}(9))$
\end{enumerate}

\item When $(G,K_1,K_2)$ is one of the following cases,
any biharmonic regular orbit of the $(K_{2}\times K_{1})$-action on $G$ is harmonic.
\begin{enumerate}
%\item[(3-1) ] $(\mathrm{SO}(2c),\ \mathrm{SO}(c) \times \mathrm{SO}(c),\ \mathrm{SO}(2c-1)) \quad (c>1)$
\item[(2-1) ] $(\mathrm{SO}(6),\ \mathrm{U}(3),\ \mathrm{SO}(3) \times \mathrm{SO}(3))$
\item[(2-2) ] $(\mathrm{SU}(1+q),\ \mathrm{SO}(1+q),\ \mathrm{S}(\mathrm{U}(1) \times \mathrm{U}(q))) \quad (52\geq q>1)$
\item[(2-3) ] $(\mathrm{Sp}(1+q),\ \mathrm{U}(1+q),\ \mathrm{Sp}(1) \times \mathrm{Sp}(q)) \quad (45\geq q>2)$
\item[(2-4) ] $(\mathrm{E}_{6},\ \mathrm{SU}(6) \cdot \mathrm{SU}(2),\ \mathrm{F}_{4})$
\end{enumerate}
\end{enumerate}
\end{theorem}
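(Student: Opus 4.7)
The plan is to apply Corollary~\ref{cohom1assoc} directly to each commutative compact symmetric triad $(G,K_1,K_2)$ with $G$ simple and $\dim\mathfrak{a}=1$, and to read off the number of proper biharmonic regular orbits from the resulting polynomial equation. Since $\dim\mathfrak{a}=1$, the orbit space of the $(K_2\times K_1)$-action on $G$ is the closed interval $\overline{P_0}$, parametrized by a single variable $\vartheta=\langle\tilde\alpha,H\rangle$ (or $\langle\delta,H\rangle$ in the isotropy case), and a regular orbit corresponds to an interior point. Thus the entire classification reduces to counting, for each admissible multiplicity datum, the number of interior solutions of the biharmonic equation that fail to be harmonic.

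First I would organize the analysis by the type of $(\tilde\Sigma,\Sigma,W)$: the isotropy types $\rm{B}_1,\rm{BC}_1$ (covering $\theta_1=\theta_2$, hence cases (1-14) and (1-15)) and the four Hermann types $\rm{III\mathchar`-B}_1,\rm{I\mathchar`-BC}_1,\rm{II\mathchar`-BC}_1,\rm{III\mathchar`-BC}_1$. In each type I would substitute the relevant multiplicities into the biharmonic equation of Corollary~\ref{cohom1assoc}, perform the substitution $u=(\cot\vartheta)^2$ (or $u=(\cot(\vartheta/2))^2$ for the types with $2\alpha\in\Sigma$), and recast the equation as a quadratic in $u$ of the schematic form
\begin{equation*}
\bigl(A\,u-B\bigr)\bigl(u-1\bigr)=C\,u,
\end{equation*}
where $A,B,C$ are positive linear combinations of the multiplicities and $B/A=(\cot\vartheta)^2$ at the harmonic orbit. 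When $C>0$ the harmonic root is automatically excluded, so any real interior root of the quadratic yields a proper biharmonic orbit; the number of such roots is then controlled by the discriminant $(A+B+C)^2-4AB$.

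Next I would enumerate the finite list of triads from Ikawa's classification (as reproduced just before the theorem) and compute these discriminants. For types $\rm{III\mathchar`-B}_1$ and $\rm{I\mathchar`-BC}_1$ and for the isotropy types $\rm{B}_1,\rm{BC}_1$, one checks a uniform sign statement: the discriminant is always strictly positive, producing two distinct non-harmonic roots in $(0,\infty)$; this yields cases (1-1)--(1-8) and (1-14),(1-15). For type $\rm{II\mathchar`-BC}_1$, setting $(m,n,n)$ from the table gives the discriminant $(m+6n)^2-64mn$; a direct computation yields $(2+6)^2-128<0$ for $(\mathrm{SO}(6),\mathrm{U}(3),\mathrm{SO}(3)\times\mathrm{SO}(3))$ and $q^2-54q+89$ for $(\mathrm{SU}(1+q),\mathrm{SO}(1+q),\mathrm{S}(\mathrm{U}(1)\times\mathrm{U}(q)))$, whose positive root $q=27+\sqrt{640}$ lies between $52$ and $53$, producing the dichotomy in (1-9) versus (2-2). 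For type $\rm{III\mathchar`-BC}_1$, the discriminant is $(m_1+6m_2+6n)^2-64(m_1+m_2)n$, and the borderline entry is $(\mathrm{Sp}(1+q),\mathrm{U}(1+q),\mathrm{Sp}(1)\times\mathrm{Sp}(q))$, where the discriminant simplifies to $4(q^2-48q+96)$; its positive root lies between $45$ and $46$, which separates (1-11) from (2-3). The remaining exceptional triads $\mathrm{E}_6$ and $\mathrm{F}_4$ are handled by a one-line numerical check, giving (1-12),(1-13) and (2-4).

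The main obstacle is not a single deep step but the bookkeeping of this finite case analysis, together with verifying that every quadratic root is indeed non-harmonic (guaranteed by $C>0$ in all tables, since in each of our types the relevant multiplicities ensure either $m_1>0$, or $2m_2+m_1>0$, or the analogous positive combination), and that each interior root lies in the admissible range $0<\vartheta<\pi/2$ (equivalently $u>0$), which follows automatically from Vieta since the product and sum of the roots are positive. Finally, I would note that the borderline possibilities $(m+6n)^2-64mn=0$ in $\rm{II\mathchar`-BC}_1$ and $(m_1+6m_2+6n)^2-64(m_1+m_2)n=0$ in $\rm{III\mathchar`-BC}_1$ do not occur for any triad on Ikawa's list, so no exceptional single-solution case appears, and the classification reduces cleanly to the two alternatives (1) and (2) of the theorem.
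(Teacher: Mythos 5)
Your proposal follows essentially the same route as the paper: reduce to Corollary~\ref{cohom1assoc}, substitute $u=(\cot\vartheta)^2$ to get a quadratic of the form $(Au-B)(u-1)=Cu$ with $C>0$ excluding the harmonic root, and run through Ikawa's list type by type via discriminants, and your key computations ($q^2-54q+89$ for type $\rm{II\mathchar`-BC}_1$ and $4(q^2-48q+96)$ for type $\rm{III\mathchar`-BC}_1$) agree with the paper's. One small correction: the quadratic $q^2-48q+96$ has \emph{two} positive roots, $24\pm\sqrt{480}$, and it is the smaller one (approximately $2.09$) that forces the inclusion of $q=2$ alongside $q>45$ in case (1-11), a detail your single-root phrasing would miss.
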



\begin{thebibliography}{HTST}
\bibitem[AM]{AM} K. Akutagawa and Sh. Maeta, 
\textit{Properly immersed biharmonic submanifolds in the Euclidean spaces}, 
Geometriae Dedicata, \textbf{164} (2013), 351--355. 

\bibitem[CMP]{CMP} R. Caddeo, S. Montaldo and P. Piu, 
\textit{On biharmonic maps},
Contemp. Math., \textbf{288} (2001), 286--290. 
  
% \bibitem[Bo]{Bo} N. Bourbaki, 
% \textit{Groupes et algebres de Lie}, 
% Hermann, Paris, 1975.

\bibitem[C]{C} B.-Y. Chen,
\textit{Some open problems and conjectures on submanifolds of finite type}, 
Soochow J. Math., \textbf{17} (1991), 169--188. 

\bibitem[D]{D} F. Defever,
\textit{Hypersurfaces in $\mathbb{E}^4$ with harmonic mean curvature vector},
Math. Nachr., {\bf 196} (1998), 61--69. 

\bibitem[EL]{EL} J. Eells and L. Lemaire,
\textit{Selected Topics in Harmonic Maps},
CBMS, Regional Conference Series in Math., Amer. Math. Soc., {\bf 50}, 1983.

\bibitem[GT]{GT} O. Goertsches and G. Thorbergsson, 
\textit{On the Geometry of the orbits of Hermann action}, 
Geom. Dedicata, {\bf 129} (2007), 101--118.

% \bibitem[HL]{HL} R. Harvey and H. B. Lawson, Jr., 
% {\it Calibrated geometries}, 
% Acta Math., {\bf 148} (1982), 47--157.

\bibitem[HV]{HV} T. Hasanis and T. Vlachos,
\textit{Hypersurfaces in $\mathbb{E}^4$ with harmonic mean curvature vector field}, 
Math. Nachr., {\bf 172} (1995), 145--169.

% \bibitem[He]{He} S. Helgason, 
% \textit{Differential geometry, Lie groups, and symmetric spaces},
% Academic Press, (1978).

\bibitem[HTST]{HTST}
D. Hirohashi, H. Tasaki, H.J. Song and R. Takagi,
\textit{Minimal orbits of the isotropy groups of symmetric spaces of compact type},
Differential Geom. Appl. {\bf 13} (2000), no. 2, 167--177.

\bibitem[IIU1]{IIU1} T. Ichiyama, J. Inoguchi and H. Urakawa,
\textit{Biharmonic maps and bi-Yang-Mills fields}, 
Note di Mat., {\bf 28}, (2009), 233--275. 

\bibitem[IIU2]{IIU2} T. Ichiyama, J. Inoguchi and H. Urakawa,
\textit{Classifications and isolation phenomena  of biharmonic maps and bi-Yang-Mills fields}, 
Note di Mat., {\bf 30}, (2010), 15--48. 

\bibitem[I1]{I1} O. Ikawa, 
\textit{The geometry of symmetric triad and orbit spaces of Hermann actions}, 
J. Math. Soc. Japan {\bf 63} (2011), 79--136.

\bibitem[I2]{I2} O. Ikawa, 
\textit{A note on symmetric triad and Hermann actions}, 
Proceedings of the workshop on differential geometry and submanifolds and its related topics, Saga, August 4--6, (2012), 220--229. 

\bibitem[I3]{I3} O. Ikawa, 
\textit{$\sigma$-actions and symmetric triads}, 
to appear in T\^ohoku Math. J.

\bibitem[IST1]{IST1} O. Ikawa, T. Sakai and H. Tasaki,
\textit{Orbits of Hermann actions},
Osaka J. Math., {\bf 38} (2001), 923--930. 

\bibitem[IST2]{IST2} O. Ikawa, T. Sakai and H. Tasaki, 
\textit{Weakly reflective submanifolds and austere submanifolds}, 
J. Math. Soc. Japan, {\bf 61} (2009), 437--481.

\bibitem[IS]{IS}
J. Inoguchi and T. Sasahara,
\textit{Biharmonic hypersurfaces in Riemannian symmetric spaces I},
Hiroshima Math. J. {\bf 46} (2016), no. 1, 97--121.

\bibitem[II]{II} S. Ishihara and S. Ishikawa,
\textit{Notes on relatively harmonic immersions},
Hokkaido Math. J., \textbf{4} (1975), 234--246. 

\bibitem[J]{J} G.Y. Jiang,
\textit{2-harmonic maps and their first and second variational formula},
Chinese Ann. Math., \textbf{7A} (1986), 388--402;  Note di Mat., {\bf 28} (2009), 209--232.

% \bibitem[KN]{KN} S. Kobayashi and K. Nomizu, 
% \textit{Foundations of Differential Geometry}, Vol. I, II, Interscience Publ., 1963, 1969, John Wiley and Sons, New York.  

\bibitem[KU]{KU} N. Koiso and H. Urakawa, 
\textit{Biharmonic submanifolds in a Riemannian manifold}, arXiv: 1408.5494v1, 2014. 

\bibitem[K]{K} A. Kollross, 
\textit{A classification of hyperpolar and cohomogeneity one actions}, 
Trans. Amer. Math. Soc. {\bf 354} (2002), no. 2, 571--612.

% \bibitem[Le]{Le} Dominic S. P. Leung, 
% {\it The reflection principle for minimal submanifolds of Riemannian symmetric spaces}, 
% J. Differential Geom., {\bf 8} (1973), 153--160.

\bibitem[LO]{LO} E. Loubeau and C. Oniciuc,
\textit{On the biharmonic and harmonic indices of the Hopf map},
Trans. Amer. Math. Soc., {\bf 359} (2007), 5239--5256.

\bibitem[M]{M} T. Matsuki, 
\textit{Double coset decompositions of reductive Lie groups arising from two involutions}, 
J. Algebra, {\bf 197} (1997), 49--91.

\bibitem[MO]{MO} S. Montaldo and C. Oniciuc,
\textit{A short survey on biharmonic maps between Riemannian manifolds},
Rev. Un. Mat. Argentina \textbf{47} (2006), 1--22. 

\bibitem[NU1]{NU1} N. Nakauchi and H. Urakawa, 
\textit{Biharmonic hypersurfaces in a Riemannian manifold with non-positive Ricci curvature}, 
Ann. Global Anal. Geom., \textbf{40} (2011), 125--131. 

\bibitem[NU2]{NU2} N. Nakauchi and H. Urakawa, 
\textit{Biharmonic submanifolds in a Riemannian manifold with non-positive curvature},
Results in Math.,{\bf 63} (2013), 467--474. 

\bibitem[NUG]{NUG} N. Nakauchi, H. Urakawa and S. Gudmundsson, 
\textit{Biharmonic maps into a Riemannian manifold of non-positive curvature}, 
Geom. Dedicata {\bf 169} (2014), 263--272. 

\bibitem[Oh]{Ohno} S. Ohno, 
\textit{A sufficient condition for orbits of Hermann action to be weakly reflective},
to appear in Tokyo Journal Mathematics.

\bibitem[OSU]{OSU} S. Ohno, T. Sakai and H. Urakawa,
\textit{Biharmoic homogeneous hypersurfaces in compact symmetric spaces},
Differential Geom. Appl. {\bf 43} (2015), 155--179.

\bibitem[On]{Oniciuc} C. Oniciuc,
\textit{Biharmonic maps between Riemannian manifolds}, 
Ann. Stiint Univ. A${\ell}$. I. Cuza Iasi, Mat. (N.S.), {\bf 48} No. 2, (2002), 237--248. 

\bibitem[OT1]{OT1} Y.-L. Ou and L. Tang, 
\textit{The generalized Chen's conjecture on biharmonic submanifolds is false}, 
arXiv: 1006.1838v1.  

\bibitem[OT2]{OT2} Y.-L. Ou and L. Tang,
\textit{On the generalized Chen's conjecture on biharmonic submanifolds},
Michigan Math. J., {\bf 61} (2012), 531--542.

\bibitem[S]{S} T. Sasahara,
\textit{Legendre surfaces in Sasakian space forms whose mean curvature vectors are eigenvectors},
Publ. Math. Debrecen, \textbf{67} (2005), 285--303. 

% \bibitem[Ti]{Ti} J. Tits, 
% {\it Classification of algebraic semisimple groups}, 
% Algebraic groups and discontinuous subgroups(Proc. Sympos. Pure Math. Boulder, Colo., 1965), Amer. Math. Soc., (1966), 33--62.
\end{thebibliography}
\end{document}